\newtheorem{definition}{Definition}
\newtheorem{lemma}{Lemma}
\newtheorem{theorem}{Theorem}
\newtheorem{corollary}{Corollary}
\theoremstyle{remark}
\newenvironment{acknowledgements}{\subsection*{Acknowledgements}}{}
\DeclareMathOperator*{\argmin}{arg\,min}
\newcommand{\X}{\mathbb{X}}
\newcommand{\Y}{\mathbb{Y}}
\newcommand{\R}{\mathbb{R}}
\newcommand{\Z}{\mathbb{Z}}
\newcommand{\st}{\mathcal{S}}
\newcommand{\wfs}{\mathrm{wfs}}
\newcommand{\e}{\varepsilon}
\newcommand{\im}{\mathrm{im\,}}
\newcommand{\ii}{i_*}
\newcommand{\jj}{j_*}
\newcommand{\h}{H}
\newcommand{\hl}{H_{\ell}}
\newcommand{\vphi}{\varphi}
\newcommand{\vrho}{\upsilon}
\newcommand{\reach}{\mathrm{reach}}
\newcommand{\card}{\mathrm{card}\,}
\newcommand{\cl}[1]{\mathrm{cl}\left(#1\right)}
\newcommand{\intr}[1]{\mathrm{int}\left(#1\right)}
\newcommand{\height}[1]{\mathrm{ht}{(#1)}}
\newcommand{\cech}{\mathcal{C}}
\newcommand{\rips}{\mathcal{R}}
\newcommand{\conv}{\mathrm{conv}}
\newcommand{\eqb}{\begin{equation}}
\newcommand{\eqe}{\end{equation}}
\newcommand{\rr}{\bar{r}}
\newcommand{\RR}{\bar{R}}
\newcommand{\dd}{\bar{\delta}}
\newcommand{\ignore}[1]{}
\newcommand{\Hawaii}{Hawai\kern.05em`\kern.05em\relax i }
\newcommand{\Manoa}{M\=anoa }
\begin{document}
\title{Another look at recovering local homology from
samples of stratified sets}

\author[Y. Mileyko]{Yuriy Mileyko}
\address{Department of Mathematics, University of \Hawaii at \Manoa,
Honolulu HI, USA}
\email{yury@math.hawaii.edu}

\date{}
\maketitle
\begin{abstract}
Recovering homological features of spaces from samples has become one of the
central themes of topological data analysis, leading to many successful
applications. Many of the results in this area focus on global homological
features of a subset of a Euclidean space. In this case, homology recovery
predicates on imposing well understood geometric conditions on the underlying
set. Typically, these conditions guarantee that small enough neighborhoods of
the set have the same homology as the set itself.  Existing work on recovering
local homological features of a space from samples employs similar conditions
locally. However, such local geometric conditions may vary from point to point
and can potentially degenerate. For instance, the size of local homology
preserving neighborhoods across all points of interest may not be bounded away
from zero. In this paper, we introduce more general and robust conditions for
local homology recovery and show that tame homology stratified sets, including
Whitney stratified sets, satisfy these conditions away from strata boundaries,
thus obtaining control over the regions where local homology recovery may not
be feasible.  Moreover, we show that true local homology of such sets can be
computed from good enough samples using Vietoris-Rips complexes.
\end{abstract}

\section{Introduction}
\label{sec:intro}
Estimating topological features of a space from samples is one of the
central topics in topological data analysis (TDA), which is a new field
that has been steadily gaining popularity due to a series of successful
applications
\citep[see e.g.][]{ghrist2008barcodes,carlsson2008local,de2007coverage,chan2013topology,horak2009persistent}.
The importance of such estimates stems from the
fact that they provide us with a better insight into the process
underlying the data, and can potentially help us select a better class
of generative models. Much of the work within TDA focuses on developing
and performing theoretical analyses of various methods for summarizing
global homological properties of data sets. In particular, by
imposing well understood geometric conditions on the underlying space,
several guarantees for recovery of correct homology from sufficiently
dense samples have been obtained
\citep[e.g.][]{niyogi.etal2008,niyogi2011topological,cohen-steiner.etal2007,chazal_oudot2008}.

Of course, one can easily make an argument that recovering global
homological information may not be enough. Indeed, a space having the
shape of the letter X is contractible, and thus has trivial homology,
but the presence of a singular point may be extremely important. Many of
such singular points can be captured through local homology, which
suggests that collections containing local homological features across
all the points in a sample may provide valuable information about the
underlying space. Consequently, the need arises for theoretical results
regarding such collections of local homological features. It is reasonable to expect
that such results would require certain regularity conditions on the
underlying space, and ``nice'' stratified spaces arise as a
natural class of spaces that may possess the needed properties.

There have been several impressive efforts regarding recovery of local
homological features of subsets of a Euclidean space from samples.  Such
results rely on the fact that if a set of interest is sufficiently nice,
then the local homology is ``well behaved''. The latter typically means
that for all sufficiently small $\rho>0$ and all $\e>0$ which are
sufficiently smaller than $\rho$, the following holds: the homology of
the $\e$-neighborhood of the whole set relative to the part of the
neighborhood outside of the ball of radius $\rho$ centered at a point of
interest is isomorphic to the local homology at that point.  In such a
case, we say that the set has a positive local homological feature size
at the point of interest.  The size, $\e$, of the neighborhood of our
set, as well as the radius, $\rho$, of the ball, are typically referred
to as scales. By changing $\e$ and $\rho$ we obtain nested neighborhoods
and balls along with the corresponding inclusion maps. The behavior of
the induced homomorphisms on relative homology (with coefficients in a
field) is typically summarized using persistent homology theory
\citep{edelsbrunner2002,zomorodian2005}, and may be referred to as
multi-scale local homology. The true local homology is typically
recovered from the multi-scale local homology by selecting appropriate
scales.

The work in
\cite{bendich.etal2007} focused on obtaining a multi-scale representation of local
homology at a single point of a topologically stratified set. In
particular, it was shown that the correct local homology at a single
point can be inferred from a sufficiently good sample if
the set has a positive local homological feature size at the point of
interest. Later, the work in \cite{bendich.etal2010} described a local homology based
method for assigning points of a noisy sample from a stratified set to
their respective strata. It is important to note that theoretical
guarantees for the correctness of such an assignment are based on local
conditions, akin to the positive local homological feature size, imposed around pairs of points.
It is also important to mention that the two previous results use Delaunay complexes for homology
computations. These simplicial complexes have nice theoretical properties, but are
computationally efficient only in low dimensions.
The result in \cite{skraba.wang2014} focused on an efficient approximation of a multi-scale
representation of local homology using Vietoris-Rips complexes. However,
the authors of the latter paper do not address the question of when such an
approximation captures the true local homology.

What one can take away from the above results is that the ability to
recover local homology at a point relies on local geometric conditions
(e.g. positivity of local homological feature size), which essentially
determine the appropriate range of scales for our computations.
Importantly, these conditions vary from point to point and may
``degenerate''. For example, local homological feature size may not be
bounded away from zero for a given set of points of interest, thus making the
appropriate range of scales empty. This suggests that if one were to use
the same scales to compute local homology at every point of a sample
from a stratified set, some errors may be inevitable. It is important to
be able to exercise control over the number of such errors. For a
stratum of a stratified set, degeneration of the local geometric
conditions guaranteeing local homology recovery is expected at the
boundary. However, existing result do not address the question of
whether such conditions do not degenerate away from the strata
boundaries, and consequently do not provide a way to control errors.

It should be pointed out that the problem of recovering local homology
simplifies significantly if the underlying space is a closed manifold.
In fact, the result in \cite{dey.etal2014} shows that the correct local homology of
a closed submanifold of $\R^n$ can be recovered at any point of a noisy
sample if the sample is close enough to the manifold in the Hausdorff
metric. The goal of this paper, is to obtain a somewhat analogous result
for a class of stratified sets that is large enough to subsume Whitney stratified
sets. As we mentioned earlier, the main difficulty is in obtaining some
control over the points where the recovery of the correct local homology
cannot be guaranteed, and we propose an approach that allows us to
tackle this issue. More specifically, we provide the following
contributions:
\begin{enumerate}
    \item  We introduce a concept of local homological seemliness,
        with weak, moderate, and strong flavors, which generalizes the
        typically used concept of local homological feature size.
        Roughly speaking, the main difference between the two is that for
        local homological seemliness we no longer require the relative
        homology at small scales to be isomorphic to the true local
        homology -- we only need the images of the inclusion induced
        homomorphisms between the relative homology at two sets of
        sufficiently small scales to be isomorphic to the true local
        homology. The reason for such a generalization is that Whitney
        stratified sets may fail to have positive homological feature
        size.
     \item We show how local homology can be recovered from good enough
         samples of locally homologically seemly sets using \v{C}ech and
         Vietoris-Rips complexes. As we mentioned earlier, existing
         results do not show that the true local homology can be recovered
         using Vietoris-Rips complexes.
     \item We prove that tame homology stratified sets as well as
         Whitney stratified sets are locally homologically seemly
         away from strata boundaries, thus obtaining some control over the
         region where mistakes are unavoidable. In particular, this result shows that
         with good enough samples, mistakes in recovering local homology
         of a stratum can happen only in a small region around its boundary.
     \item We show how our results strengthen when the sets under
         consideration are nicer, e.g. have positive weak feature size
         or are manifolds (with or without boundary).
\end{enumerate}
Two key results of the paper are stated in Theorems
\ref{thm:lh_recover_K} and \ref{thm:weak_seemly}. These theorems also
yield an important corollary. Suppose that $K\subset\R^n$ is a compact
neighborhood retract possessing a Whitney stratification or, more
generally, a tame homology stratification, $\st$. Let $P\subset
\R^n$ be a finite set (a noisy sample of $K$). For $Q\subseteq P$, let
$\rips_{\alpha}(Q)$ denote the Vietoris-Rips complex at scale $\alpha$
over $Q$. Denote by $d_{H}$ the Hausdorff distance on compact subsets of
$\R^n$.  The corollary can be formulated as follows.
\begin{corollary}\label{cor:intro}
    Let $\e>0$ be sufficiently small, and suppose that $d_{H}(P,K)<\e$.
    There are
    $\delta(\e)>0$, $R(\e) > r(\e)>0$, as well as strata dependent
    $w_{X}(\e)>0$, $X\in\st$, with $w_{X}(\e)\to 0$ as $\e\to 0$, such that
    for any $p\in P$, $x\in X\in\st$,
    and any homological dimensions $\ell$, the image of the inclusion
    induced homomorphism
    $$
        \hl(\rips_{\e}(P),\rips_{\e}(P-B_{R(\e)}(p))) \to
        \hl(\rips_{\delta(\e)}(P),\rips_{\delta(\e)}(P-B_{r(\e)}(p)))
    $$
    is isomorphic to the local homology 
    $
        \h(K,K-\{x\})
    $
    as long as $\|x-p\|<\e$ and $\|x-y\|\geq w_{Y}(\e)$, where $Y$ is any
    stratum in the boundary of $X$ and $y\in Y$.
\end{corollary}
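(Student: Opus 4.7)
The plan is to derive the corollary as a direct consequence of the two main theorems, using Theorem \ref{thm:weak_seemly} to certify that $K$ is locally homologically seemly at $x$ (with quantitative parameters depending on how far $x$ is from the boundary strata of $X$), and then applying Theorem \ref{thm:lh_recover_K} to translate seemliness into a recovery statement for Vietoris--Rips complexes built over the sample $P$. In outline, seemliness at $x$ should give us a pair of admissible scale pairs $(R,r)$ and auxiliary thickening parameters; Theorem \ref{thm:lh_recover_K} then picks a Hausdorff-distance threshold and a Rips-scale pair $(\e,\delta(\e))$ compatible with these parameters so that the relevant Rips relative homology image recovers $\h(K,K-\{x\})$.

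First I would invoke Theorem \ref{thm:weak_seemly}, which supplies, for each stratum $X\in\st$ and each stratum $Y$ in the boundary of $X$, a function $w_{Y}(\e)\dto 0$ such that any point $x\in X$ with $\|x-y\|\geq w_Y(\e)$ for every $y\in Y$ and every such $Y$ is locally homologically seemly (in the weak, moderate, or strong sense needed by Theorem \ref{thm:lh_recover_K}) with seemliness scales $R(\e)>r(\e)>0$ uniform over the set of ``good'' points at scale $\e$. These scales are what appear in the corollary. Next, I would invoke Theorem \ref{thm:lh_recover_K}: given local homological seemliness at $x$ with these parameters, and any sample $P$ with $d_H(P,K)<\e$ sufficiently small, the image of the inclusion-induced map
$$
  \hl(\rips_{\e}(P),\rips_{\e}(P-B_{R(\e)}(p)))\to\hl(\rips_{\delta(\e)}(P),\rips_{\delta(\e)}(P-B_{r(\e)}(p)))
$$
is isomorphic to $\h(K,K-\{x\})$ for any $p$ within $\e$ of $x$. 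Finally, quantifying over all strata $X$ and all ``good'' $x\in X$, and taking the maximum of the finitely many $w_Y$ (since $\st$ is locally finite for a compact $K$), gives the single family $w_X(\e)$ appearing in the statement.

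The step requiring the most care will be the compatibility of the two parameter packages. Theorem \ref{thm:weak_seemly} certifies seemliness with its own ``internal'' scales (depending on the stratification geometry near $x$), whereas Theorem \ref{thm:lh_recover_K} needs $\e,\delta(\e),r(\e),R(\e)$ to satisfy specific inequalities relative to those internal scales, with further slack to absorb the $\e$-offset between $p$ and $x$ (via the triangle inequality applied to $B_{R(\e)}(p)$ vs.\ $B_{R(\e)-\e}(x)$, and similarly for $r(\e)$). One must therefore choose $\delta(\e),R(\e),r(\e)$ monotonically in $\e$ so that, simultaneously, (i) the Rips-scale gap is large enough for Theorem \ref{thm:lh_recover_K} to apply, (ii) the ball radii $R(\e),r(\e)$ dominate $\e$ by enough margin to absorb the shift $x\mapsto p$, and (iii) the widths $w_X(\e)$ grow just fast enough to keep $x$ inside the seemly region but still tend to zero as $\e\to 0$. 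A routine but non-trivial bookkeeping argument shows that such a simultaneous choice is possible, which then completes the reduction of the corollary to the two theorems.
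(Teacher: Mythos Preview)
Your proposal is correct and follows exactly the route the paper takes: the text states explicitly that Corollary~\ref{cor:intro} ``is a direct consequence of [Theorem~\ref{thm:lh_recover_K}] and Theorem~\ref{thm:weak_seemly}.'' One small clarification: the bookkeeping you flag as ``requiring the most care'' (the choice of $w_Y(\e)$, the compatibility of scale packages, and the absorption of the $p\mapsto x$ shift) is already fully packaged inside Theorem~\ref{thm:lh_recover_K} via the set $\Omega(\e,f_c,g_c)$, Lemma~\ref{lem:seemly_K}, and Lemma~\ref{lem:interleave}, so once Theorem~\ref{thm:weak_seemly} certifies weak seemliness no further argument is needed.
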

More details about the relevant concepts and the values of admissible
scales are provided in subsequent sections. The corollary shows that
even for fairly general stratified sets, we can compute correct local
homology from samples for each stratum, except possibly for points close
to the boundary of the strata.  Moreover, we can do it efficiently using
Vietoris-Rips complexes.

On the practical side, such results provide a justification for using
Vietoris-Rips complexes when computing local homology, along with an
additional insight into the range of applicable scales. A simple example
is shown in Figure \ref{fig:num_example}. 
The values of the quantities
$\delta(\e)$, $r(\e)$ and $R(\e)$, as employed in Corollary
\ref{cor:intro}, were chosen based on Corollary
\ref{cor:lh_strong_simple} in Section \ref{sec:seemly} and the
estimate mentioned right after that corollary. These choices
(implicitly) determine the values of $w_{Y_1}(\e)$ and $w_{Y_2}(\e)$, and we can
see how they affect correctness of local homology estimation at the
points along the $1$-dimensional strata that are too close to the
boundary. In fact, Corollary \ref{cor:intro} says that regardless of our
choice of admissible scales, there may always be regions close to the
$0$-dimensional strata where local homology estimation will fail.
We would also like to point out that the cost of our computation would
remain essentially the same even if we isometrically embedded the given
stratified set in a very high dimensional Euclidean space. In contrast,
such an embedding would make the use of Delaunay complexes
computationally prohibitive.

\begin{figure}
    \centering
    \includegraphics[width=0.9\textwidth]{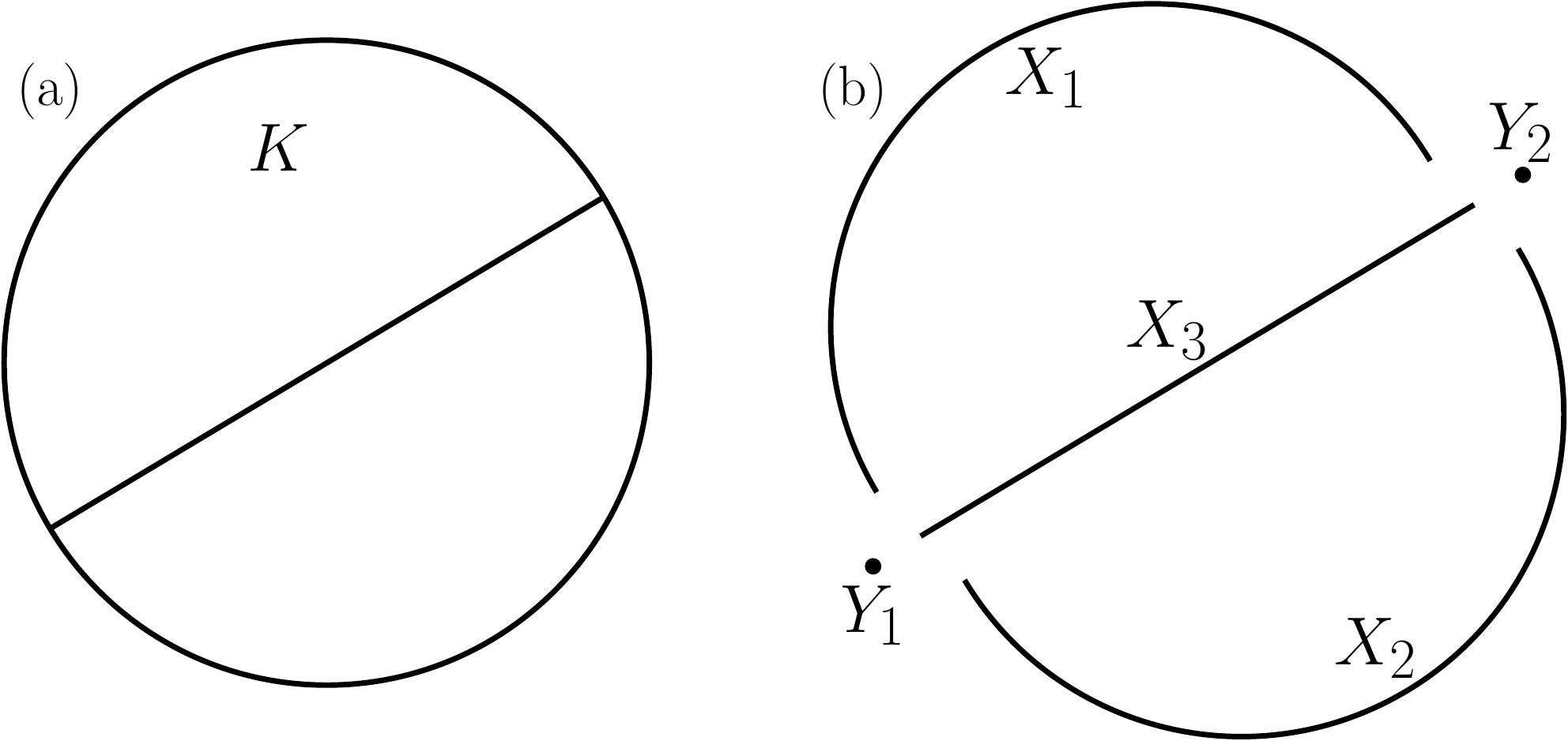}

    \vspace{3ex}
    \includegraphics[width=0.9\textwidth]{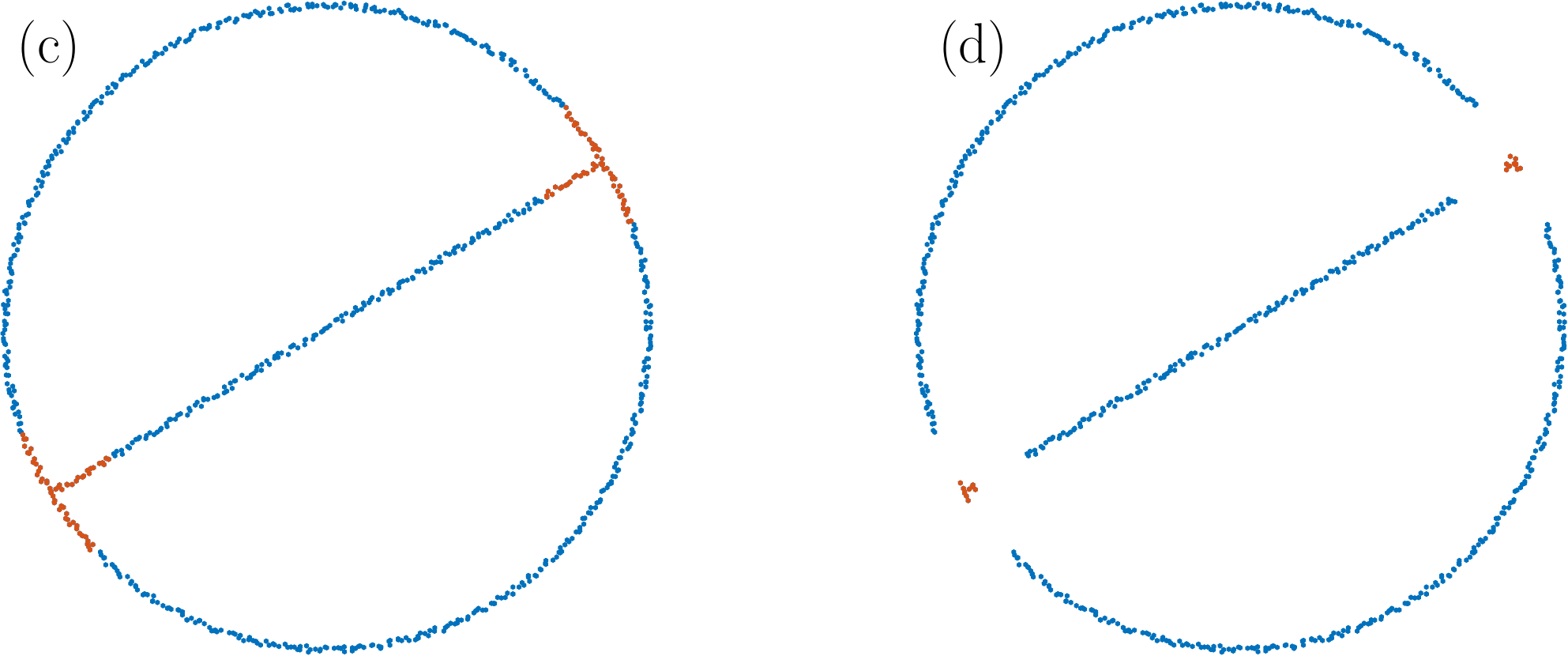}
    \caption{\label{fig:num_example}
    An example of recovering local homology from a sample of a
    stratified set using Vietoris-Rips complexes. (a) A simple
    stratified set, $K$, consisting of a unit circle and a chord passing
    through its center. (b) Strata of $K$, consisting of two open arcs,
    an open line segment, and two points.  (c) An $\e$-sample of $K$,
    with $\e<0.019$. Blue and red colors indicate points where
    $1$-dimensional local homology was estimated to have rank $1$ and
    $2$, respectively. (d) The part of the sample where local homology
    was estimated correctly.  To simplify the computation, coefficients
    in $\Z_2$ were used. The values for the scales of Vietoris-Rips
    complexes, as denoted in Corollary \ref{cor:intro}, were chosen as
    follows: $\delta(\e)=0.06$, $r(\e)=0.116$, $R(\e)=0.175$.
    }
\end{figure}

The rest of the paper is structured as follows. Section
\ref{sec:prelim} provides background information along with some
useful results, and introduces the
classes of stratified sets that we shall be considering. In Section
\ref{sec:seemly}, we define the notions of a weak, moderate, and strong local homological
seemliness. We also show that relative homology computed at points of an $\e$-sample of
a locally homologically seemly stratified set using Vietoris-Rips or \v{C}ech
complexes at certain scales recovers true local homology of the set at
all points of the sample except for a small fraction.
Section \ref{sec:seemly_strat} is dedicated to proving that all the
classes of stratified sets that we are considering, which include
Whitney stratified sets, are locally homologically seemly, with stronger
versions of seemliness for nicer sets. Section
\ref{sec:conclusion} concludes the paper.
To improve the flow of the exposition, we moved the proofs of some
technical lemmas to the Appendix.

\section{Preliminaries}
\label{sec:prelim}
Before delving into the details of our exposition, we need to introduce
some nomenclature and state a few useful results that we shall be
relying upon later. We assume familiarity with basic algebraic topology, in
particular homology theory, and refer the reader to such texts as
\cite{bredon2013,dold1995,eilenberg.steenrod1952},
if a refresher on the subject is needed. A more complete
background information on other relevant topics, e.g. the theory of
stratified spaces and metric geometry, can be found in such
comprehensive texts as \cite{burago.etal2001,pflaum2001}.

\subsection{General notation and problem description}
Throughout the paper, $(\X, d)$ will denote a metric space, which we
shall always assume to be complete and locally compact. We shall also
assume that the metric $d$ is such that for any $x,y\in\X$ there is a
midpoint, i.e. a point $z\in\X$ such that $d(x,y)=2d(x,z)=2d(y,z)$.
Given $K\subseteq\X$, we denote by $\cl{K}$ the closure of $K$, by
$\intr{K}$ the interior of $K$, and by $K^c=\X-K$ the complement of $K$
in $\X$. Here and throughout the paper, the minus sign employed as a
binary operation on sets denotes the usual set difference.
It will be convenient to denote $\R^n_+ =
\{(x_1,\ldots,x_n)\in\R^n\,|\, x_i\geq 0, i=1,\ldots,n\}$. Given
$\e\in\R_+$, we let
\begin{align*}
    B_{\e}(K) &= \{x\in\X \,|\, d(x,K)<\e\},\\
    D_{\e}(K) &= \{x\in\X \,|\, d(x,K)\leq\e\},\\
    S_{\e}(K) &= \{x\in\X \,|\, d(x,K)=\e\},
\end{align*}
where $d(x,K)=\inf_{y\in K}{d(x,y)}$. Thus, for $x\in\X$,
$B_{\e}(x)$, $D_{\e}(x)$, and $S_{\e}(x)$ denote the open ball, the
closed ball, and the sphere of radius $\e$ centered at $x$,
respectively. Throughout the paper, we shall adopt the convention
$\inf{\emptyset}=\infty$ and $\sup{\emptyset}=-\infty$, which
immediately implies that $D_{\e}(\emptyset) =
B_{\e}(\emptyset)=\emptyset$ for $\e\in\R_+$.

The aforementioned assumptions on the metric space $(\X,d)$ guarantee
existence of shortest paths between any two points \citep[Theorem
2.4.16]{burago.etal2001}. We shall call a set $A\subseteq \X$
\emph{strongly convex} if any two points $x,y \in A$ are connected by a
unique shortest path lying completely in $A$ and depending continuously
on the end points. The space $\X$ will be called \emph{locally strongly
convex} if all sufficiently small open (and hence closed) balls at any
point are strongly convex. In particular, if $\X$ has curvature bounded
from above then it is locally strongly convex \citep[Proposition
9.1.17]{burago.etal2001}. The convexity radius at $x\in\X$, denoted
$\conv(x)$, is the supremum over all $\delta$ such that $B_{\delta}(x)$
is convex. The convexity radius of $K\subseteq\X$ is
$\conv(K)=\inf\{\conv(x)\,|\, x\in K\}$. In a locally strongly convex
space, $\conv(K)>0$ if $K$ is compact. For the rest of the paper, $\X$
will be assumed locally strongly convex.

If $\X$ is a Riemannian manifold and $M\subseteq\X$ is a submanifold,
$T_xM$ will denote the tangent space to $M$ at $x\in M$. We will make
use of the notion of transversality. Recall that two submanifolds
$M,N\subseteq\X$ intersect transversally, denoted $M\pitchfork N$,
if for any $x\in M\cap N$ we have $T_xM+T_xN = T_x\X$.

Let $K\subseteq\X$ be a compact set. We shall use the phrase
\emph{local homology of $K$ at $x\in K$} to refer to the local
singular homology groups of all homological dimensions. More
specifically, we define
$\h(K, K-\{x\}) = \bigoplus_{\ell}{H_{\ell}(K, K-\{x\})}$, with maps
between such objects being direct sums of maps between the homology
groups of each dimension. By \emph{local homology of $K$} we shall mean
the collection $\{\h(K, K-\{x\}) | x\in K\}$.
Throughout the paper, homology groups are assumed to have coefficients
in $\Z$.

Our goal is to estimate (in a
very broad sense) the local homology of $K$ when only
an $\e$-sample of $K$ is available. Given $\e>0$, an $\e$-sample of $K$
is a finite set $P\subset\X$ such that $d_{H}(K,P)<\e$, where $d_{H}$
denotes the Hausdorff distance:
$$
d_{H}(K,P) = \max \left\{ \inf\{
    \delta>0 \,|\, K\subseteq B_{\delta}(P) \}\right.,
    \left.\inf \{
    \delta>0 \,|\, P\subseteq B_{\delta}(K) \} \right\}.
$$
Such a $P$ is
often called a noisy $\e$-sample, since we do not require $P\subseteq K$.
If the latter condition is satisfied, we say that $P$ is noise-free.
Note that for $\delta\geq 0$ we have
$$
D_{\delta+\e}(P)\supseteq D_{\delta}(K)\quad
\text{ and }\quad
D_{\delta+\e}(P) - B_r(p)\supseteq D_{\delta}(K)-B_R(x),
$$
where $p\in P$, $x=\argmin_{z\in K}{d(p,z)}$, and $R\geq
r+\e$. Thus, it is reasonable to expect that for a ``sufficiently nice'' set $K$,
the relative homology groups $\h(D_{a}(P), D_{a}(P)-B_{b}(p))$ (with
appropriately chosen $b>a>0$)
capture the local homology of $K$ at $x$. In what follows, it
will be convenient to refer to the size, $\delta$, of a neighborhood
$D_{\delta}(K)$ as a \emph{global scale}, and to the radius of the ball
$B_{R}(x)$ as a \emph{local scale}.

\subsection{A simple algebraic consideration}
A majority of homology inference results in topological data analysis rely on the following
simple observation: if a sequence of group isomorphisms,
$$A_1\to A_2\to A_3,$$
factors through groups $B_1$ and $B_2$ to form a sequence
$$A_1\to B_1\to A_2\to B_2\to A_3,$$
then the image of the resulting homomorphism
from $B_1$ to $B_2$ is isomorphic to $A_i$, $\im{(B_1\to B_2)}\approx
A_i$, $i=1,2,3$. For example, if one can construct three nested neighborhoods of the set of
interest $K$ which capture its true homology and interleave with two
nested neighborhoods of the sample $P$, then the above result tells us
that the correct homology can be recovered by looking at the image of
the inclusion induced homomorphisms between the homology groups of the
two neighborhoods of $P$.

We shall need a slight generalization of this observation.
\begin{lemma}
    \label{lem:homob}
    Suppose that a sequence of group homomorphisms
    $$
    A_0\overset{\vphi_0}{\to} A_1\overset{\vphi_1}{\to}
    A_2\overset{\vphi_2}{\to} A_3
    $$
    factors through groups $B_1$ and $B_2$ to form a sequence
    $$
    A_0\overset{\psi_0}{\to} B_1\overset{\psi_1}{\to}
    A_1\overset{\vphi_1}{\to}
    A_2\overset{\gamma}{\to} B_2\overset{\psi_2}{\to} A_3,
    $$
    and is such that restrictions $\im{\vphi_{i-1}}\to\im{\vphi_i}$,
    $i=1,2$, are isomorphisms. Then $\im{(B_1\to B_2)}$ $\approx$
    $\im{\vphi_i}$, $i=0,1,2$.
\end{lemma}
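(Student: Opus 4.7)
The plan is a direct diagram chase. Write $\mu = \gamma \circ \vphi_1 \circ \psi_1 \colon B_1 \to B_2$ for the composition whose image we need to identify. Since the hypothesis already provides isomorphisms $\im{\vphi_0} \approx \im{\vphi_1} \approx \im{\vphi_2}$ via the restricted maps, it suffices to establish $\im{\mu} \approx \im{\vphi_1}$; the other two isomorphisms will then come for free.

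First I would compute $\vphi_1(\psi_1(B_1))$. The factorizations give $\vphi_0 = \psi_1 \circ \psi_0$, so $\psi_1(B_1) \supseteq \psi_1(\psi_0(A_0)) = \im{\vphi_0}$, and hence
\[
\vphi_1(\psi_1(B_1)) \supseteq \vphi_1(\im{\vphi_0}).
\]
By assumption the restriction $\vphi_1 \colon \im{\vphi_0} \to \im{\vphi_1}$ is an isomorphism, so $\vphi_1(\im{\vphi_0}) = \im{\vphi_1}$. Since the reverse inclusion $\vphi_1(\psi_1(B_1)) \subseteq \im{\vphi_1}$ is automatic, we conclude $\vphi_1(\psi_1(B_1)) = \im{\vphi_1}$. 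Therefore $\im{\mu} = \gamma(\im{\vphi_1})$.

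Next I would show that $\gamma$ is injective on $\im{\vphi_1}$. The factorization $\vphi_2 = \psi_2 \circ \gamma$ together with the hypothesis that $\vphi_2 \colon \im{\vphi_1} \to \im{\vphi_2}$ is an isomorphism (hence injective) forces $\gamma$ restricted to $\im{\vphi_1}$ to be injective. Combined with the previous step, the map $\gamma \colon \im{\vphi_1} \to \gamma(\im{\vphi_1}) = \im{\mu}$ is an isomorphism, so $\im{\mu} \approx \im{\vphi_1}$. Chaining this with the hypothesized isomorphisms $\im{\vphi_0} \approx \im{\vphi_1}$ and $\im{\vphi_1} \approx \im{\vphi_2}$ yields the lemma.

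There is no real obstacle: the whole argument is a two-step chase using that preimages/images through the intermediate groups $B_1$ and $B_2$ factor the $\vphi_i$. The only point that requires a bit of care is being precise about which assertion is used where, namely that the surjectivity portion of $\vphi_1|_{\im{\vphi_0}}$ is needed to make $\vphi_1(\psi_1(B_1))$ fill out all of $\im{\vphi_1}$, while the injectivity portion of $\vphi_2|_{\im{\vphi_1}}$ is what keeps $\gamma$ from collapsing $\im{\vphi_1}$ inside $B_2$.
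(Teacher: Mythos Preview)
Your proof is correct and follows essentially the same two-step chase as the paper: first use $\im{\vphi_0}\subseteq\im{\psi_1}$ together with the surjectivity of $\vphi_1|_{\im{\vphi_0}}$ to get $\im{\vphi_1\circ\psi_1}=\im{\vphi_1}$, then use the injectivity of $\vphi_2|_{\im{\vphi_1}}$ (via $\vphi_2=\psi_2\circ\gamma$) to conclude that $\gamma$ restricts to an isomorphism on $\im{\vphi_1}$. The paper's version is terser but structurally identical.
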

\begin{proof}
    Clearly, $\im{\vphi_0}=\im{\psi_1\circ\psi_0} \subseteq
    \im{\psi_1}$ and $\im{\vphi_1\circ\psi_1} \subseteq
    \im{\vphi_1}$, and since the restriction
    $\im{\vphi_{0}}\to\im{\vphi_1}$ is an isomorphisms, we get
    $\im{\vphi_1\circ\psi_1} = \im{\vphi_1}$. Also, the restriction
    $\im{\vphi_1}\to\im{\gamma}\circ\vphi_1$ is an isomorphisms because
    $\im{\vphi_1}\to\im{\vphi_2}$ is. Hence,
    $\im{(B_1\to B_2)}=\im{\gamma\circ\vphi_1\circ\psi_1}\approx\im{\vphi_1}$.
\end{proof}

This result allows us to relax the requirement that (pairs of) nested
neighborhoods of the set of interest $K$ (and a point $x\in K$) capture
the true (local) homology -- it is sufficient that the images of the
inclusion induced homomorphisms capture it. More specifically, we need
to look for nested neighborhoods $D_{\delta_i}(K)$,
$B_{r_i}(x)$, $i=0,1,2,3$, with $\delta_{i}\leq\delta_{j}$ and $r_{i}\geq
r_{j}$ for $i\leq j$, such that the images of homomorphisms
$$
\h(D_{\delta_i}(K), D_{\delta_i}(K) - B_{r_i}(x)) \to
\h(D_{\delta_{j}}(K), D_{\delta_{j}}(K) - B_{r_{j}}(x)),\quad i<j,
$$
are isomorphic to $\h(K, K-\{x\})$. Then we can try to interleave these
neighborhoods with the corresponding neighborhoods of the $\e$-sample $P$.

\subsection{\v{C}ech and Vietoris-Rips complexes}
When performing actual computations, one employs combinatorial structures
which capture topology of the neighborhoods of interest. Two of such
structures are the \v{C}ech and Vietoris-Rips simplicial complexes.

If $\X$ is a metric space, $P\subseteq\X$, and $\alpha>0$, the \v{C}ech
complex over $P$ at a scale $\alpha$, $\cech_{\alpha}(P)$, is an
\emph{abstract simplicial complex} consisting of (abstract) simplices, i.e. finite
subsets, $\sigma\subseteq P$
such that $\cap_{x\in\sigma}{D_{\alpha}(x)} \neq \emptyset$. In other
words, it is the \emph{nerve} of the collection of balls $\{D_{\alpha}(x)\,|\,
x\in P\}$ \citep[see e.g.][]{eilenberg.steenrod1952}.
The Vietoris-Rips complex, $\rips_{\alpha}(P)$, consists of
all simplicies $\sigma\subseteq P$ whose edges belong to
$\cech_{\alpha}(P)$.

If $P$ is finite and the balls $D_{\alpha}(x)$, $x\in P$, are strongly
convex, then $D_{\alpha}(P)$ and $\cech_{\alpha}(P)$ are homotopy
equivalent \citep[see e.g.][]{dugundji1967}. Vietoris-Rips complexes are
generally not homotopy equivalent to the corresponding \v{C}ech
complexes, but they  are easier to construct and satisfy the following
interleaving condition: $\cech_{\alpha}(P)$ $\subseteq$
$\rips_{\alpha}(P)$ $\subseteq$ $\cech_{s\alpha}(P)$, where $s=2$ in
general, and $s=\sqrt{\frac{2n}{n+1}}$ if $\X$ is an $n$-dimensional
Euclidean space \citep[see e.g.][Theorem 2.5]{de2007coverage}. If the
dimension of the Euclidean space is not specified, one can safely take
$s=\sqrt{2}$.

We shall use \v{C}ech and Vietoris-Rips complexes in the setting where
$P$ is an $\e$-sample of a compact set $K\subseteq\X$.  The goal is to
recover the local homology at points of $K$ using the relative homology
of appropriately chosen subcomplexes of the \v{C}ech and Vietoris-Rips
complexes over $P$.  The required scales for the simplicial complexes
may differ depending on the type of the $\e$-sample (noisy or
noise-free) and the space $\X$ (Euclidean space or not). Hence, it will
be convenient to introduce two constants (depending on $P$ and $\X$,
respectively) that capture these differences. Throughout the paper, we
let
\begin{equation}\label{nom:const}
    t=\left\{
        \begin{aligned}
            0,&\quad\text{ if } P \text{ is noise free},\\
            1,&\quad\text{ otherwise},
        \end{aligned}
      \right.
      \quad\quad
    s=\left\{
        \begin{aligned}
            \sqrt{2},&\quad\text{ if } \X \text{ is a Euclidean space},\\
            2,&\quad\text{ otherwise}.
        \end{aligned}
      \right.
\end{equation}
It will also be convenient
to introduce the some helpful notation for local and relative homology
groups. Throughout the paper, we let 
\begin{equation}\label{nom:hom}
\begin{aligned}
    F(a,b) &= \h(D_{a}(K), D_{a}(K)-B_{b}(x)),\quad
    \text{ for } a>0 \text{ or } b>0,\\
    F(0,0)&=\h(K, K-\{x\}),\\
    C(a,b) &= \h(\cech_{a}(P), \cech_{a}(P-B_{b}(p))),\\
    V(a,b) &= \h(\rips_{a}(P), \rips_{a}(P-B_{b}(p))).
\end{aligned}
\end{equation}
The dependency of the left hand sides
on points $p\in P$ and $x\in K$ has been suppressed, since it will be
either stated or clear from the context how they should be chosen.

We now can state a lemma that lays a foundation for the subsequent use
of \v{C}ech and Vietoris-Rips complexes.
\begin{lemma}\label{lem:interleave}
    Suppose that $K\subseteq\X$ is compact and $P\subset\X$ is an
    $\e$-sample of $K$. Let $p\in P$, and let
    $x\in K$ be a point closest to $p$.
    For $\delta\geq 0$, take
    \begin{equation*}
        R>(1+s)\delta+(1+s+2t)\e,
    \end{equation*}
    and assume
    $s(\delta+\e)<\conv(P)$.
    Then inclusion induced homomorphisms
    \begin{equation*}
        F(\delta,R) \to F(\beta_c,r_c)
        \quad\text{and}\quad
        F(\delta,R) \to F(\beta_v,r_v),
    \end{equation*}
    where
    \begin{align*}
        &\beta_c\geq \delta+(1+t)\e, \quad r_c\leq R-2\delta-(2+2t)\e,\\
        &\beta_v\geq s\delta+(s+t)\e,\quad r_v\leq R-(1+s)\delta-(1+s+2t)\e,
    \end{align*}
    factor through
    \begin{equation*}
        C(\delta+\e, r+\delta+(1+t)\e)
        \quad\text{and}\quad
        V(\delta+\e, r+s\delta+(s+t)\e),
    \end{equation*}
    respectively:
    \begin{equation}\label{eq:c_interleave_n}
    \begin{tikzcd}[column sep=small, row sep=tiny]
        F(\delta,R) \ar[r] &C(\delta+\e, r_c+\delta+(1+t)\e) \ar[r] &
        F(\beta_c, r_c)
    \end{tikzcd}
    \end{equation}
    \begin{equation}\label{eq:r_interleave_n}
    \begin{tikzcd}[column sep=small, row sep=tiny]
        F(\delta,R) \ar[r] &V(\delta+\e, r_v+s\delta+(s+t)\e) \ar[r] &
        F(\beta_v, r_v)
    \end{tikzcd}
    \end{equation}
\end{lemma}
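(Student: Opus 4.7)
The plan is to construct both factorizations as chains of inclusions of topological pairs, then translate the geometric objects into simplicial complexes using the nerve theorem together with the standard sandwich $\cech_{a}(P)\subseteq\rips_{a}(P)\subseteq\cech_{sa}(P)$. The building blocks are two purely metric facts: since $d_H(K,P)<\e$ one has $D_{\delta}(K)\subseteq D_{\delta+\e}(P)\subseteq D_{\delta+(1+t)\e}(K)$, and since $x$ is a closest point of $K$ to $p$ one has $d(p,x)\leq t\e$. The hypothesis $s(\delta+\e)<\conv(P)$, together with the obvious $\conv(P-B_b(p))\geq\conv(P)$, ensures the nerve theorem applies to every \v{C}ech complex that will appear, for both the ambient sample and for any excised subsample $P-B_b(p)$.

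For the \v{C}ech factorization \eqref{eq:c_interleave_n}, I would verify two inclusions of pairs. The inclusion
$$(D_\delta(K),\, D_\delta(K)-B_R(x))\hookrightarrow (D_{\delta+\e}(P),\, D_{\delta+\e}(P-B_{r_c+\delta+(1+t)\e}(p)))$$
is the key step: given $y\in D_\delta(K)-B_R(x)$, a witness $q\in P$ within $\delta+\e$ of $y$ satisfies $d(q,p)\geq d(y,x)-d(y,q)-d(p,x)>R-\delta-(1+t)\e$, and requiring this to exceed $r_c+\delta+(1+t)\e$ produces exactly the bound $r_c\leq R-2\delta-(2+2t)\e$. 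The companion inclusion
$$(D_{\delta+\e}(P),\, D_{\delta+\e}(P-B_{r_c+\delta+(1+t)\e}(p)))\hookrightarrow (D_{\beta_c}(K),\, D_{\beta_c}(K)-B_{r_c}(x))$$
is mechanical: the ambient part gives $\beta_c\geq\delta+(1+t)\e$ directly from the interleaving above, while the relative part holds because any $y$ with $d(y,x)<r_c$ satisfies $d(y,p)<r_c+t\e\leq r_c+\delta+(1+t)\e$. Applying the nerve theorem to both pairs converts this chain into the claimed factorization through $C(\delta+\e,\,r_c+\delta+(1+t)\e)$.

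The Rips factorization \eqref{eq:r_interleave_n} follows the same template, with $\rips_{\delta+\e}(P)$ inserted between $\cech_{\delta+\e}(P)$ and $\cech_{s(\delta+\e)}(P)$, and with the nerve theorem applied at the outer scale $s(\delta+\e)<\conv(P)$ to recover $D_{s(\delta+\e)}(P)$. The same style of triangle-inequality estimate, now carried out with $s(\delta+\e)$ in place of $\delta+\e$ on the outgoing side, yields $\beta_v\geq s\delta+(s+t)\e$ and $r_v\leq R-(1+s)\delta-(1+s+2t)\e$, which explains why the hypothesis on $R$ takes precisely the stated form (it is exactly what makes the admissible range for $r_v$ nonempty). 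The only real obstacle is bookkeeping: five distinct scales --- $\delta$ and $\beta_c$ or $\beta_v$ on the $K$ side, $R$ and $r_c$ or $r_v$ on the ball side, and $\delta+\e$ or $s(\delta+\e)$ on the sample side --- must be tracked consistently across all four inclusions of pairs, and one must check uniformly that each \v{C}ech subcomplex appearing is genuinely the nerve of a cover by strongly convex balls.
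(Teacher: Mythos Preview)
Your approach is correct and essentially the same as the paper's: the same metric interleavings $D_\delta(K)\subseteq D_{\delta+\e}(P)\subseteq D_{\delta+(1+t)\e}(K)$ and $d(p,x)\leq t\e$, the nerve theorem, and the \v{C}ech--Rips sandwich, though the paper threads the chain through two auxiliary objects $G(a,b)=\h(D_a(P),D_a(P)-B_b(p))$ and $J(a,b)=\h(D_a(P),D_a(P-B_b(p)))$ where you collapse everything into two direct inclusions of pairs. The one point you leave implicit that the paper makes explicit is that the nerve equivalence is on the absolute spaces $\cech_\alpha(Q)\simeq D_\alpha(Q)$, and the five-lemma applied to the long exact sequences of the pairs is what yields the relative isomorphism $C(a,b)\approx J(a,b)$.
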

\begin{proof} 
    Note that if $P\subseteq K$ then $p=x$, so we have
    $B_{b}(x)$ $\supseteq$ $B_{b-t\e}(p)$.
    The triangle inequality also yields
    \begin{gather*}
        D_{a}(K)\subseteq D_{a+\e}(P) \subseteq D_{a+(1+t)\e}(K),\\
        D_{a}(P)-B_{b}(p) \subseteq D_{a}(P-B_{b-a}(p)) \subseteq
        D_{a}(P)-B_{b-2a}(p).
    \end{gather*}
    Since
    $\cech_{\alpha}(Q)$ is homotopy equivalent to $D_{\alpha}(Q)$ for
    any $Q\subseteq P$,
    $\alpha<\conv(P)$, the five-lemma \citep[see e.g.][Lemma 5.10]{bredon2013} yields
    $\h(\cech_{\alpha}(P), \cech_{\alpha}(Q))$
    $\approx$
    $\h(D_{\alpha}(P), D_{\alpha}(Q))$.
    Denoting
    \begin{align*}
        G(a,b) &= \h(D_{a}(P), D_{a}(P)-B_{b}(p)),\\
        J(a,b) &= \h(D_{a}(P), D_{a}(P-B_{b}(p))),
    \end{align*}
    and noting that $r_c+\delta+(1+t)\e$ $\leq$ $R-\delta-(1+t)\e$
    we obtain
    $$
    \begin{tikzcd}[column sep=small, row sep=tiny]
        &[-4em]C(\delta+\e, r_c+\delta+(1+t)\e)\ar[d, phantom, sloped,"\approx"]&[-4em]\\
        &[-4em]J(\delta+\e, r_c+\delta+(1+t)\e) \ar[dr] &[-4em]\\
        J(\delta+\e, R-\delta-(1+t)\e) \ar[ur] &[-4em]&[-4em]
        G(\delta+\e, r_c+t\e) \ar[d]\\
        G(\delta+\e, R-t\e) \ar[u] &[-4em]&[-4em]
        F(\delta+(1+t)\e, r_c)\ar[d]\\
        F(\delta, R) \ar[u] &[-4em]&[-4em]
        F(\beta_c, r_c)
    \end{tikzcd}
    $$
    which implies \eqref{eq:c_interleave_n}. Note that it is
    enough to have $\delta+\e<\conv(K)$ for this result to hold. 

    Taking into account that $\cech_{\alpha}(Q)$ $\subseteq$
    $\rips_{\alpha}(Q)$ $\subseteq$ $\cech_{s\alpha}(Q)$ for any
    $Q\subseteq
    P$, we obtain
    $$
    \begin{tikzcd}[row sep=tiny]
        C(\delta+\e, r_v+s\delta+(s+t)\e)\ar[r]
        \ar[d,phantom, sloped, "\approx"]&
        V(\delta+\e, r_v+s\delta+(s+t)\e)\ar[d]\\
        J(\delta+\e, r_v+s\delta+(s+t)\e)&
        C(s\delta+s\e, r_v+s\delta+(s+t)\e)
        \ar[d,phantom, sloped, "\approx"]\\
        J(\delta+\e, R-\delta-(1+t)\e) \ar[u]&
        J(s\delta+s\e, r_v+s\delta+(s+t)\e) \ar[d]\\
        G(\delta+\e, R-t\e) \ar[u] &
        G(s\delta+s\e, r_v+t\e)\ar[d]\\
        &F(s\delta+(s+t)\e, r_v)\ar[d]\\
        F(\delta, R) \ar[uu] &
        F(\beta_v, r_v)
    \end{tikzcd}
    $$
    which implies \eqref{eq:r_interleave_n}.
\end{proof}
It is worth mentioning that, in the above lemma, the noisy case does not
require $x$ to be the closest point -- we simply need $d(x,p)<\e$. Also,
we can notice that the amounts by which the scales of the complexes and
the radii of the balls have to change follow a certain pattern. More
specifically, the quantities that stand out are $c\delta+(c+t)\e$ and
$(1+c)\delta+(1+c+2t)\e$, where $c$ is equal to either $1$ or $s$.
Since we may need to employ these kind of quantities multiple times,
we introduce the notation
\begin{equation}\label{nom:func}
\begin{aligned}
    g_c(a,b) &= ca+(c+t)b,\\
    f_c(a,b) &= g_c(a,b)+g_1(a,b) =\\
    &=(1+c)a+(1+c+2t)b,
\quad\quad\quad\quad
    c\in\{1,s\},
\end{aligned}
\end{equation}
which will be used throughout the paper.

\subsection{Stratified sets}
To ensure a feasibility of the above approach to local homology
recovery, at least for small enough $\e$, we need to impose some
restrictions on the set $K$, and on the way $K$ is
embedded in $\X$. To address the latter, we shall assume that $K$ is a
neighborhood retract, that is, there is a neighborhood $U\supseteq K$ and
a continuous map $\pi:U\to K$ such that $\pi(x)=x$ for all $x\in K$.
As for $K$ itself, we shall require that
it admit a \emph{tame homology stratification}. By a stratification of $K$ we
mean a locally finite collection $\st$ of pairwise disjoint, locally
closed subsets of $K$ such that $K=\cup_{X\in\st}{X}$ and the
\emph{Frontier Condition} is satisfied:
\begin{equation*}
\text{for all }
    X,Y\in\st, X\cap\cl{Y}\neq\emptyset \implies X\subseteq\cl{Y}.
\end{equation*}
A set $X\in\st$ is
called a \emph{stratum}. A stratification $\st$ is a tame homology
stratification if it satisfies the following conditions:
\begin{enumerate}
    \item Each $X\in\st$ is a finite dimensional homology manifold. That
        is, for any $x\in X$, $\hl(X, X-\{x\})$ is trivial for all
        $\ell\geq 0$ except one -- the dimension of $X$ -- in which case
        it is isomorphic to $\Z$.
    \item For any $x\in X\in\st$, the inclusion induced homomorphisms
        \begin{gather*}
            \hl(K, K-B_{r}(x)) \to \hl(K, K-\{x\}), \text{ and}\\
            \hl(K, K-B_{r}(x)) \to \hl(K, K-B_{\rho}(y)) \to \hl(K, K-\{y\})
        \end{gather*}
        are all isomorphisms as long as $r>0$ is sufficiently small,
        $y\in B_{r}(x)\cap X$, and $0<\rho<r-d(x,y)$. Moreover, each
        $\hl(K,K-\{x\})$, $\ell\geq 0$, is finitely generated.
\end{enumerate}
Note that these conditions imply that local homology
groups remain constant along the strata. We shall consider only tame
homology stratifications, and will omit the qualifier ``tame'' for
convenience.

Assume now that a set $K\subseteq\X$ is endowed with a stratification
$\st$. The Frontier Condition induces a partial order on the
stratification: given $X,Y\in\st$, we define $X\leq Y$ if $X\subseteq
\cl{Y}$.  It follows that for each $Y\in\st$ we have $\cl{Y} =
\cup_{X\leq Y}{X}$. We say that a stratum $X\in\st$ has \emph{height}
$k$, and denote it by $\height{X}=k$, if $k$ is the largest integer such
that there exist $X_0,\ldots,X_k\in\st$ satisfying $X_0\leq\cdots\leq
X_k=X$. In other words, the height of $X\in\st$ is one less than the
size of the longest chain in $\st$ having $X$ as the maximal element.
As an example, consider the stratification shown in Figure
\ref{fig:num_example}(b). It has two strata of height $0$
and three strata of height $1$.
We shall denote by $\st_k$ the collection of all the strata of height
$k$, that is, $\st_k = \{X\in\st\,|\,\height{X}=k\}$.  Clearly, every
minimal element of $\st$ has height zero, so $\st_0$ contains all the
minimal elements.

Suppose we have another metric space $\Y$ and a set $L\subseteq\Y$ with
a stratification $\mathcal{R}$. We say that a map $f:\X\to\Y$ is
\emph{stratum preserving} (or stratified) if $f(K^c)\subseteq L^c$, and
for any $X\in\st$ there is $Y\in\mathcal{R}$ such that $f(X)\subseteq Y$.
Similarly, given a (not necessarily stratified) set $A\subseteq\Y$ and a map
$F:A\times[0,1]\to\X$, we say that $F$ is stratum preserving if for any
$x\in A$ either $F(\{x\}\times[0,1])\subseteq K^c$ or there is $X\in\st$
such that $F(\{x\}\times[0,1])\subseteq X$. We say that $F$ is
\emph{nearly stratum preserving} if it is stratum preserving on
$A\times[0,1)$.

Existence of a homology stratification of $K$ implies only very
mild restrictions on the geometric behavior of the neighborhoods of
strata in $K$. One could make those restrictions a little stronger by
requiring strata to be (topological) manifolds satisfying certain
homotopy based or homeomorphism based compatibility conditions, as do
the homotopically stratified spaces of Quinn \citep{quinn1988} or the locally
cone-like TOP stratified sets of Siebennman \citep{siebenmann1972}. However, it will be
more instructive to investigate how local homology recovery improves
when significantly stronger geometric restrictions on the strata are
imposed.

Perhaps the most well known class of ``nice'' stratified sets are
Whitney stratified sets. In this case, we assume $\X$ to be a smooth,
complete Riemannian manifold. A stratification $\st$ of a set
$K\subseteq\X$ is called a \emph{Whitney stratification} if each stratum is a smooth
manifold, and for any two strata $Y\leq X$ the following conditions
hold: whenever $x_i\in X$ and $y_i\in Y$ are two sequences converging to
$y\in Y$ such that the tangent spaces $T_{x_i}X$ converge (in the
corresponding Grasmannian) to a space $\tau$, and (with respect to some
local coordinate system on $\X$) the secant lines  $l_i=\overline{x_iy_i}$
converge (in the corresponding projective space) to a line $l$, we have
\begin{itemize}
    \item[(a)] $T_yY\subseteq \tau$
    \item[(b)] $l\subseteq \tau$.
\end{itemize}

Stratified sets satisfying the above condition (a) or (b) are called
(a)-regular or (b)-regular, respectively. It is well know that
(b)$\implies$(a) \citep[see e.g.][]{mather2012notes}. If $K\subset\X$ is a
Whitney stratified set, it may be useful to employ a uniform
view and regard the manifold $\X$ as a Whitney stratified set whose
strata are $\X-K$ and the strata of $K$.

Besides also being homology
stratified, Whitney stratified sets have a lot of important properties
\citep{goresky1988strat,pflaum2001}. Pertinent to our problem is the fact that if $\st$ is a Whitney
stratification of a set $K\subseteq\X$ and $L\subseteq K$ is a closed union
of strata, then there exists a neighborhood $U\supseteq L$ in $\X$ and a
nearly stratum preserving continuous map $F:U\times[0,1]\to U$ providing a strong
deformation retraction of $U$ onto $L$, i.e., $F(x,0)=x$ and $F(x,1)\in
L$ for $x\in U$, $F(y,t)=y$ for $y\in L$, $t\in[0,1]$
(see e.g. \cite{pflaum2017} as well as Theorem 3.9.4 in \cite{pflaum2001}).
Another useful fact concerns transverse intersections and
unions of Whitney stratified sets. If $\st$ and $\mathcal{R}$ are Whitney
stratifications of subsets $K,L\subseteq\X$, respectively, we say that $K$
intersects $L$ transversely if $X$ intersects $Y$ transversally for any
$X\in\st$, $Y\in\mathcal{R}$. In such a case, the stratification
$$\st\cap_t \mathcal{R} = \cup{\{X\cap Y\}},$$
where the union is taken
over all $X\in\st$ and $Y\in\mathcal{R}$ with non-empty intersection, is
a Whitney stratification of $K\cap L$ \citep[see e.g.][]{cheniot1972sections}.
In other words, a transverse intersection of two Whitney stratified sets
is a Whitney stratified set whose strata are
intersections of the strata of the two sets.
Let us define
$$
\st\cup_t\mathcal{R} = \bigcup\big(\{X\cap
Y\}\cup\{X-Y\}\cup\{Y-X\}\big),
$$
where the union is taken over all $X\in\st$ and $Y\in\mathcal{R}$ with
non-empty intersection. The proof of the following lemma can be found in
the Appendix.
\begin{lemma}\label{lem:trans_union}
    Let $K, L\subseteq\X$  be two Whitney stratified sets with
    stratifications $\st$ and $\mathcal{R}$, respectively. Suppose that
    $K$ and $L$ intersect transversely. Then $K\cup L$ is a Whitney
    stratified set with stratification $\st\cup_t\mathcal{R}$.
\end{lemma}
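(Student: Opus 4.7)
The plan is to verify that $\st\cup_t\mathcal{R}$ satisfies the three defining conditions of a Whitney stratification of $K\cup L$: it is a locally finite partition of $K\cup L$ into pairwise disjoint, locally closed smooth submanifolds of $\X$; it satisfies the Frontier Condition; and every pair of strata in the induced frontier order satisfies Whitney's condition (b). The strata come in three kinds -- intersection pieces $X\cap Y$ for $X\in\st$, $Y\in\mathcal{R}$ with $X\cap Y\neq\emptyset$; ``$K$-exclusive'' pieces $X\cap L^c$ for $X\in\st$; and ``$L$-exclusive'' pieces $Y\cap K^c$ for $Y\in\mathcal{R}$. First I would check that these cover $K\cup L$ and are pairwise disjoint, and that they are locally closed and smooth: local closedness is inherited from the original stratifications using that $K$ and $L$, being locally finite unions of closed strata, are closed in $\X$; each intersection piece is a smooth submanifold of dimension $\dim X+\dim Y-\dim\X$ by the transversality hypothesis; each difference piece is an open subset of the corresponding original stratum, hence also a smooth submanifold.

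Next I would verify the Frontier Condition and Whitney's (b) by cases on the types of the two strata involved in a comparable pair. The case of two intersection strata reduces to the known fact that $\st\cap_t\mathcal{R}$ is a Whitney stratification of $K\cap L$, cited in the paper via \cite{cheniot1972sections}. The case of two difference strata of the same kind reduces directly to the Frontier and Whitney conditions of $\st$ (or $\mathcal{R}$), using the openness of $L^c$ (resp.~$K^c$). The genuinely new and nontrivial case is when an intersection stratum $X\cap Y$ lies in the frontier of a difference stratum $X'\cap L^c$; the Frontier Condition of $\st$ then forces $X\leq X'$. For a sequence $p_i\in X'\cap L^c$ converging to $p\in X\cap Y$, with tangent planes $T_{p_i}X'\to\tau$ and secant lines $\overline{p_ip}\to l$, Whitney's (b) for the pair $X\leq X'$ in $\st$ gives $T_pX\subseteq\tau$ and $l\subseteq\tau$. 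Combining with the transversality identity $T_p(X\cap Y)=T_pX\cap T_pY\subseteq T_pX$ yields both (a) and (b) for the new pair. The symmetric case involving $Y'\cap K^c$ is handled identically.

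The main obstacle is the careful enumeration and uniform treatment of these mixed frontier adjacencies, since $\st\cup_t\mathcal{R}$ introduces pairs not present in either $\st$ or $\mathcal{R}$ alone. The decisive step is the identity $T_p(X\cap Y)=T_pX\cap T_pY$ from transversality, which reduces the tangent-space half of Whitney's (b) for a mixed pair to the corresponding condition already known in the contributing original stratification. Mixed adjacencies of the opposite type -- a difference stratum sitting in the frontier of an intersection stratum -- are ruled out by closedness of $L$ (or $K$), since no limit of points inside a closed set can lie in its complement. Once these reductions are in place, the remaining cases amount to systematic bookkeeping, and the lemma follows.
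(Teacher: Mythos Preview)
Your approach is correct and self-contained, but it differs from the paper's. The paper does not verify the stratification axioms (local finiteness, Frontier Condition, smoothness of pieces) or condition~(a) directly; instead it invokes Bekka's notion of (c)-regularity as a black box: (b)-regular stratifications are (c)-regular, and it is known that the transverse union of two (c)-regular stratified sets is again (c)-regular with the stratification $\st\cup_t\mathcal{R}$. This disposes of everything except condition~(b), which the paper then checks by the same case analysis you outline. Your route avoids the dependence on (c)-regularity at the cost of having to handle the Frontier Condition and the manifold structure of the pieces by hand; the paper's route is shorter but imports a heavier result.

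Two small remarks on your write-up. First, in the mixed case $Z=X\cap Y\leq W=X'\cap L^c$, the transversality identity $T_p(X\cap Y)=T_pX\cap T_pY$ is not really the decisive step for~(b): once you note that $W$ is open in $X'$, so $T_{p_i}W=T_{p_i}X'$, and that $Z\subseteq X\leq X'$, condition~(b) for the pair $X\leq X'$ in $\st$ already gives $l\subseteq\tau$ directly from the sequences $p_i\in X'$, $p\in X$. The transversality identity is only needed if you insist on verifying~(a) separately, but (b) implies~(a). Second, you treat ``two difference strata of the same kind'' but do not explicitly exclude a pair of opposite kinds, say $Z=X\cap L^c$ in the frontier of $W=Y\cap K^c$; this is ruled out by the same closedness argument you use later (a limit of points in $K^c$ cannot lie in $K$), and should be mentioned for completeness.
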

Thus, the union of two transversely intersecting Whitney stratified sets is a
Whitney stratified set whose strata are the intersections and the
differences of the strata of the two sets.

\subsection{Distance function and weak feature size}
An alternative way to impose some geometric regularity on a compact set
$K\subset\X$ is through the properties of the distance function
$$
d_K:\X\to\R_+,\quad
d_K(x) = \inf_{y\in K}{d(x,y)}.
$$
Assuming that $\X$ is
a smooth Riemannian manifold, a point $x\in\X-K$ is called
\emph{regular} for $d_K$ if there is a unit vector $v\in T_x\X$ such
that for any shortest unit speed geodesic $\gamma$ connecting $x$ to the
set of closest points in $K$, we have $\angle(v,\dot\gamma(0)) >
\frac{\pi}{2}$ \citep[see e.g.][]{grove1993,petersen2006}. Otherwise, $x$ is a \emph{critical}
point, and the value $r\in\R_+$ is called critical if $d_K^{-1}(r)$
contains a critical point. The \emph{weak feature size} of $K$,
$\wfs(K)$, is the infimum of all positive critical values of $d_K$. If
$\wfs(K)>0$, that is, there exists $\e>0$ such that $d_K^{-1}((0,\e))$
contains only regular points, then
$D_{\delta_1}(K)=d_K^{-1}([0,\delta_1])$ is isotopic to
$D_{\delta_2}(K)=d_K^{-1}([0,\delta_2])$ for any
$\delta_1,\delta_2\in(0,\e)$ \citep{grove1993}. Note that a Whitney
stratified set may not have a positive weak feature size.  Also, a
compact set with a positive weak feature size may not admit a Whitney
stratification. Thus, it may be reasonable to combine the two conditions
and consider Whitney stratified sets which have a positive weak feature
size.

\section{Homological seemliness}
\label{sec:seemly}

Let us now fix a compact homology stratified neighborhood retract
$K\subseteq\X$ with a stratification $\st$. First, we try to determine
conditions that allow us to find nested neighborhoods of $K$ and a fixed
$x\in K$ which can be used with Lemmas \ref{lem:homob} and
\ref{lem:interleave}. We later show show that analogous conditions can
be obtained for all of $K$, and that these conditions do hold under the
assumptions that we made regarding $K$ and $\X$.

In addition to the assumptions on $K$ and $\X$,
this section will rely on notation \eqref{nom:const},
\eqref{nom:hom}, and \eqref{nom:func}, introduced in Section
\ref{sec:prelim}. Also, $P$ will always denote an $\e$-sample of $K$,
$p\in P$ will be an arbitrary point, and, where appropriate, $x\in K$
will be either a point with $d(x,p)<\e$, if $P$ is noisy, or the closest
point to $p$, if $P$ is noise-free. Any additional restrictions on $x$
will be states explicitly.

\subsection{Homological seemliness at a point}
Given $\e\in\R_+$, let $A(\e)=\{(R,r,\alpha) \in\R^3_+ | R\geq r >
\alpha\geq \e\}\subset\R^3_+$. For $x\in K$ we define
$A_{x}(\e)$ as the set of all triples $(R,r,\alpha)\in A(\e)$ which
satisfy the following conditions:
\begin{enumerate}
    \item For all $\rho\in(0,R]$, $D_{\rho}(x)$ are topological balls
        (and $S_{\rho}(x)$ are transverse to all the strata, if $K$ is a
        Whitney stratified set).
    \item Inclusions yield the commutative diagram
    \begin{equation}\label{eq:def_A}
        \begin{tikzcd}[row sep=small]
            F(\e,R) \ar[r]\ar[rr,bend left=10, "\jj"]&
            \im{\jj}\ar[r] &
            F(\alpha,r)\\
            F(0,R) \ar[ru, sloped, "\jj\circ\ii" above, sloped, "\approx" below]\ar[u,"\ii"]\ar[r,"\approx"]&
            F(0,\rho) \ar[r,"\approx"]&
            F(0,0).
    \end{tikzcd}
    \end{equation}
\end{enumerate}
Note that the above diagram imposes constraints on the triples
$(R,r,\alpha)$ by requiring that certain inclusion induced homomorphisms
be isomorphisms.  The presence of the diagonal arrow in diagram
\eqref{eq:def_A} is equivalent to saying that $\ii$ is injective and the
restriction $\im{\ii}\to\im{\jj}$ is an isomorphism. In the case $\e=0$,
the imposed constraints imply that $A_{x}(0)$ contains triples
$(R,r,\alpha)$ such that inclusion induced homomorphisms
$$
\h(K, K-B_{\rho}(x)) \to \h(D_{\alpha}(K),
D_{\alpha}(K)-B_{\rho}(x)),\quad\rho\in[r,R],
$$
are injective (with images isomorphic to the true local homology at
$x$).

Intuitively, one can regard the sets $A_{x}(\e)$ as containing admissible
global and local scales that can \emph{potentially} be used with Lemmas
\ref{lem:homob} and \ref{lem:interleave}. Indeed, suppose $(R,r,\alpha)\in
A_{x}(\e)$. Then any spurious homology classes present in
$\h(D_{\e}(K)$, $D_{\e}(K)$ $-$ $B_{R}(x))$ disappear once we increase the
global scale to $\alpha$ and decrease the local scale to $r$. This
makes the use of Lemma \ref{lem:homob} feasible. To see how to transform the
mere feasibility into an actual result, we need to better understand the
structure of $A_{x}(\e)$.

We start with the simple case $\e=0$. As an example, we explicitly
computed the set $A_{x}(0)$ for the simple stratified set from Figure
\ref{fig:num_example}, with $x$ a $0$-dimensional stratum. It is shown
in Figure \ref{fig:A0}.  More generally, we can prove the following
result.
\begin{figure}
    \centering
    \includegraphics[width=0.6\textwidth]{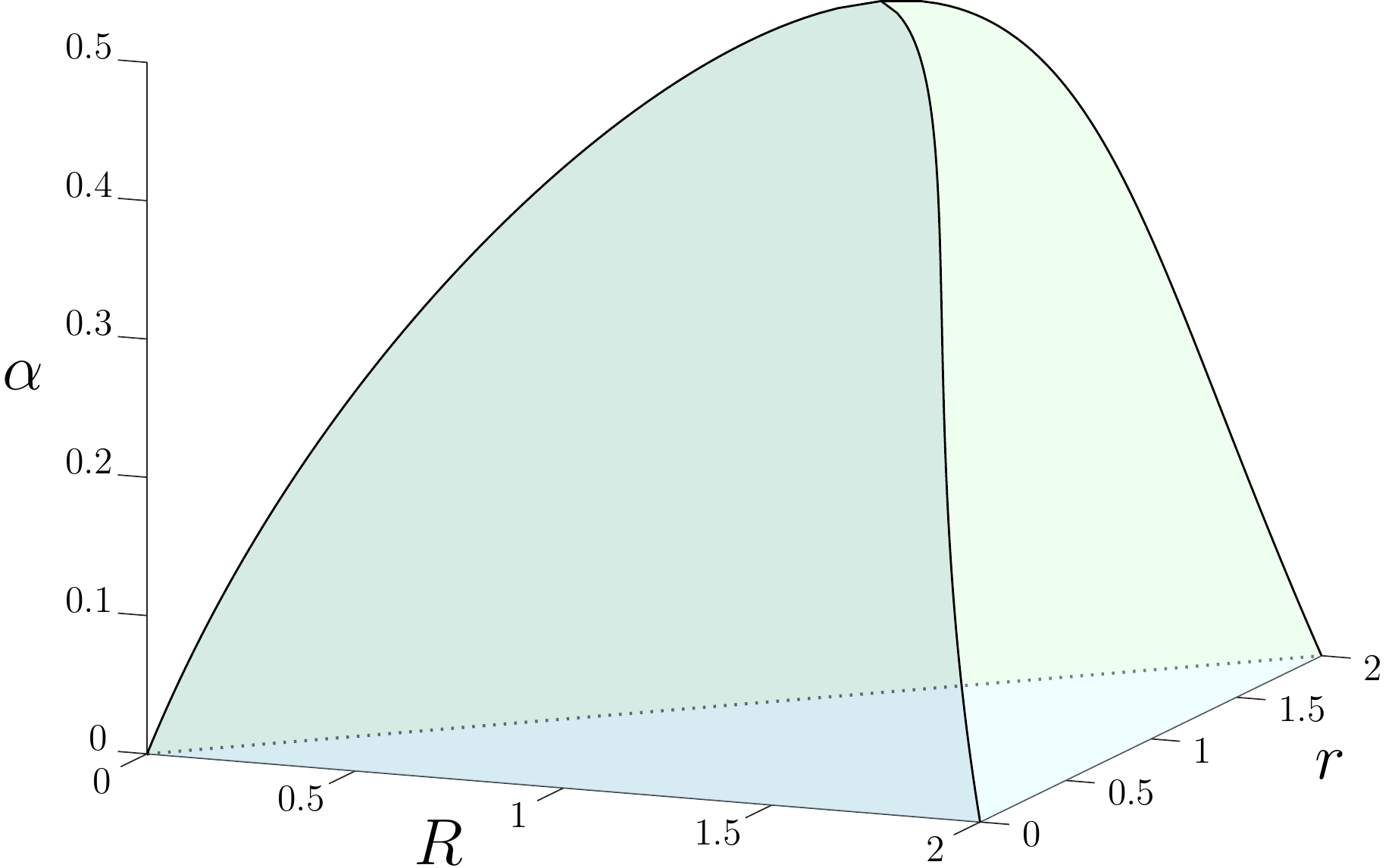}
    \caption{\label{fig:A0}
    The structure of $A_{x}(0)$ for the stratified set from Figure
    \ref{fig:num_example}.
    }
\end{figure}

\begin{lemma}
    \label{lem:A_0}
    For a sufficiently small $\alpha'>0$, the sets of local scales
    $$
        A_{x}(0\,|\,\alpha=\alpha') =
        \{(R,r)\,|\, (R,r,\alpha')\in A_{x}(0)\}
    $$
    is not empty and has the structure of a right isosceles
    triangle (possibly not containing its legs) in the $(R,r)$-plane
    with the legs parallel to the axes, and the hypotenuse lying on the
    diagonal. Moreover, if $\alpha'_1\leq\alpha'_2$ then
    $$
        A_{x}(0\,|\,\alpha=\alpha'_2) \subseteq
        A_{x}(0\,|\,\alpha=\alpha'_1).
    $$
\end{lemma}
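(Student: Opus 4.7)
The plan is to decouple the conditions defining $A_x(0\,|\,\alpha=\alpha')$ into a bound $R_0=R_0(x)>0$ controlling the intrinsic local behavior of $K$ at $x$ and a compatibility bound $\alpha_*>0$ controlling how faithfully the $\alpha'$-thickening of $K$ reproduces the local homology. Once both bounds are in hand, the admissible $(R,r)$ region will be cut out by the three inequalities $\alpha'<r$, $R\geq r$, $R\leq R_0$, and this is exactly the claimed right isosceles triangle with vertices $(\alpha',\alpha')$, $(R_0,\alpha')$, $(R_0,R_0)$: legs parallel to the axes, and hypotenuse on the diagonal $R=r$.

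First, using the tame homology stratification hypothesis on $K$ together with the local strong convexity of $\X$, I would select $R_0>0$ small enough that for every $\rho\in(0,R_0]$ the ball $D_\rho(x)$ is a topological ball. In the Whitney case one additionally shrinks $R_0$ below the smallest positive critical value of the stratified Morse function $d(\cdot,x)|_K$, guaranteeing that each $S_\rho(x)$ meets every stratum transversely. Condition~2 in the definition of a tame homology stratification lets one further arrange that the inclusion induced map $F(0,R)\to F(0,\rho')\to F(0,0)$ is an isomorphism whenever $0<\rho'\leq R\leq R_0$. This settles the first defining condition of $A_x(0)$ and the bottom row of diagram~\eqref{eq:def_A}.

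Next, I would use that $K$ is a compact neighborhood retract in the locally strongly convex $\X$ to produce, for every $\alpha'$ below some threshold $\alpha_*$, a retraction $\pi_{\alpha'}:D_{\alpha'}(K)\to K$ that fixes $K$ and moves every point by at most $\eta(\alpha')$, with $\eta(\alpha')\to 0$ as $\alpha'\to 0$ (one arranges $\eta(\alpha')\leq\alpha'$ via a nearest-point style projection on a sub-neighborhood where it is well defined). For a triple $(R,r,\alpha')$ inside the prospective triangle, $\pi_{\alpha'}$ sends the pair $(D_{\alpha'}(K),D_{\alpha'}(K)-B_r(x))$ into $(K,K-B_{r-\eta(\alpha')}(x))$, and the composition
$$ F(0,R)\xrightarrow{\jj} F(\alpha',r)\xrightarrow{(\pi_{\alpha'})_*} F(0,r-\eta(\alpha')) $$
agrees on homology with the inclusion induced map between the two $K$-pairs, which is an isomorphism by the first step as long as $r-\eta(\alpha')\in(0,R_0]$. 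Hence $\jj$ is injective and its image is naturally isomorphic to $F(0,0)$; for $\e=0$ the vertical arrow $\ii$ is the identity, so this is exactly the remaining diagonal isomorphism demanded by~\eqref{eq:def_A}, and the second defining condition of $A_x(0)$ is verified.

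Combining the two steps with $\alpha'$ small enough that $\eta(\alpha')<\alpha'$, the triple $(R,r,\alpha')$ lies in $A_x(0)$ precisely when $\alpha'<r\leq R\leq R_0$; the parenthetical ``possibly not containing its legs'' in the statement accommodates the strict inequality $r>\alpha'$ and a potential boundary failure at $R=R_0$. For the nesting $A_x(0\,|\,\alpha=\alpha'_2)\subseteq A_x(0\,|\,\alpha=\alpha'_1)$ with $\alpha'_1\leq\alpha'_2$, I factor $F(0,R)\to F(\alpha'_2,r)$ through $F(\alpha'_1,r)$ using the inclusion $D_{\alpha'_1}(K)\hookrightarrow D_{\alpha'_2}(K)$; since the composition is injective with image naturally isomorphic to $F(0,0)$, an elementary diagram chase transfers both properties to the first factor. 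The delicate point I expect to be the main obstacle is the production of $\pi_{\alpha'}$ with displacement linearly controlled by $\alpha'$, which for a general compact neighborhood retract requires a careful interplay between the abstract retraction on $U\supseteq K$ and the geodesic structure of $\X$; this is straightforward in the Whitney-stratified or positive-reach settings that primarily motivate the paper.
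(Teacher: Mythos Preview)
Your argument has a genuine gap at exactly the point you flag as the ``main obstacle'': the displacement bound $\eta(\alpha')\leq\alpha'$ for the retraction $\pi_{\alpha'}$. For a general compact neighborhood retract---which is all the lemma assumes---there is no reason such a bound should hold; a nearest-point projection is only available for sets of positive reach, not for arbitrary neighborhood retracts in a locally strongly convex space. Without the bound, your argument only shows that the $\alpha'$-section contains $\{\max(\alpha',\eta(\alpha'))<r\leq R\leq R_0\}$ and is contained in $\{\alpha'<r\leq R\leq R_0\}$, and when $\eta(\alpha')>\alpha'$ nothing you have written rules out an irregular shape in the strip $\alpha'<r\leq\eta(\alpha')$. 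The ``precisely when'' claim is therefore unjustified, and with it the triangle structure. (There is also a smaller issue: you choose $R_0$ ``small enough'' rather than as the exact supremum, so even the outer containment is not literally established.)

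The paper avoids this entirely by arguing algebraically rather than geometrically. Instead of computing the section explicitly, it proves a convexity property: whenever $(\rho_l,\rho_l,\e)$ and $(\rho_u,\rho_u,\e)$ both lie in $A_x(0)$, a three-row commutative diagram with rows at global scales $0$, $\e'$, $\e$ and columns at local scales $\rho_u\geq\rho_2\geq\rho_1\geq\rho_l$ forces $(\rho_2,\rho_1,\e')\in A_x(0)$ for every $\rho_l\leq\rho_1\leq\rho_2\leq\rho_u$ and every $\e'\in[0,\e]$. This single diagram chase delivers the triangle structure and the monotonicity in $\alpha'$ simultaneously, with no quantitative control on any retraction required. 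Non-emptiness is handled separately via Lemma~\ref{lem:A_0_X}, whose retraction argument uses only continuity and compactness: one fixes a slack $\beta>0$ and then finds $\e>0$ so that $\pi(D_\e(K)-B_{\rho+\beta}(x))\subseteq K-B_\rho(x)$, which is a far weaker requirement than your pointwise displacement bound. Your factorization argument for the monotonicity $A_x(0\,|\,\alpha=\alpha'_2)\subseteq A_x(0\,|\,\alpha=\alpha'_1)$ is correct and is essentially what the paper's diagram yields for that part.
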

The proof of Lemma \ref{lem:A_0} can be found in the Appendix. The lemma
suggests that the structure of $A_{x}(\e)$ may be understood through the
structure of the sets
$$
A_{x}(\e\,|\,\alpha=\alpha') = \{(R,r)\,|\,
(R,r,\alpha')\in A_{x}(\e)\},
$$
which we shall refer to as $\alpha$-sections.


It turns out that the structure of $A_{x}(\e)$ for $\e>0$ is only
slightly more complicated than the structure of $A_{x}(0)$, and can be
nicely described by considering the sections of $A_{x}(\e)$ by lines
parallel to the coordinate axes.  More specifically, we define
\begin{align*}
    A_{x}(\e\,|\, R=R',r=r')&=
    \{\alpha\in\R_+ | (R',r',\alpha)\in A_{x}(\e)\},\\
    A_{x}(\e\,|\, R=R',\alpha=\alpha') &=
    \{r\in\R_+ | (R',r,\alpha')\in A_{x}(\e)\},\\
    A_{x}(\e \,|\, r=r',\alpha=\alpha') &=
    \{R\in\R_+ | (R,r',\alpha')\in A_{x}(\e)\}.
\end{align*}
We shall refer to these sets as line sections of
$A_{x}(\e)$.

\begin{lemma}
    \label{lem:A}\leavevmode
    \begin{enumerate}
        \item \label{lem:A_1} $A_{x}(\e)\subseteq A_{x}(\delta)$
            whenever $0\leq\delta\leq\e$;
        \item \label{lem:A_2} Line sections of $A_{x}(\e)$
            are (possibly degenerate) intervals.
        \item \label{lem:A_3} Suppose $(R_1,r_1,\alpha_1)\in A_{x}(\e)$, and
            let $R_2>R_1$, $r_2<r_1$. Then 
            \begin{align*}
                &A_{x}(\e | R=R_1,\alpha=\alpha_1) \subseteq A_{x}(\e |
                R=R_2,\alpha=\alpha_1),\\
                &A_{x}(\e | r=r_1,\alpha=\alpha_1) \subseteq A_{x}(\e |
                r=r_2,\alpha=\alpha_1)
            \end{align*}
            as long as the corresponding right hand side is not an empty
            set.
        \item \label{lem:A_4} The above properties are preserved under intersections.
    \end{enumerate}
\end{lemma}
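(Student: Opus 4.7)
My plan is to verify each of the four parts by diagram-chasing with inclusion-induced homomorphisms on relative homology, exploiting the two structural ingredients encoded in the definition of $A_{x}(\e)$: the bottom-row isomorphisms $F(0,R) \to F(0,\rho) \to F(0,0)$, which canonically identify every $F(0,\rho)$ with $\h(K,K-\{x\})$ for $\rho \in (0,R]$, and the diagonal isomorphism $F(0,R) \to \im\jj$. The recurring move is a squeeze: the image of a new composition $F(0,R') \to F(\alpha,r)$ is pinched between the image of an already-admissible composition and a factor it is forced to contain, which makes both coincide and the diagonal isomorphism persist.

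For part \eqref{lem:A_1}, I would factor $\ii$ through $F(\delta,R)$ and let $\jj'\colon F(\delta,R) \to F(\alpha,r)$ be the inclusion-induced map; since $\jj' \circ (F(0,R)\to F(\delta,R))$ agrees with the old $\jj\circ\ii$, the chain $\im(\jj\circ\ii) \subseteq \im\jj' \subseteq \im\jj = \im(\jj\circ\ii)$ collapses to equality, while the bottom row is independent of $\e$. For part \eqref{lem:A_2}, each line section is handled by inserting a middle value of the varying parameter between two admissible endpoints into the appropriate chain of inclusion-induced maps; the injectivity of both endpoint composites $F(0,R) \to F(\alpha,r)$ forces injectivity of the middle composite, and functoriality matches the middle image with the corresponding image of $\jj$. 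Varying $R$ is slightly subtler because both $F(0,R)$ and $F(\e,R)$ change, but the bottom-row stability at an endpoint supplies the comparison isomorphism between the different versions of $F(0,\cdot)$, reducing to the same squeeze.

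For part \eqref{lem:A_3}, non-emptiness of the right-hand side secures bottom-row stability at the new scale. For the first inclusion, given $(R_1, r, \alpha_1) \in A_{x}(\e)$ and a witness $(R_2, r'', \alpha_1) \in A_{x}(\e)$, the stability at $R_2$ supplies an iso $F(0,R_2) \to F(0,R_1)$ that factors the new diagonal composite through the old one; the squeeze between the new diagonal image, $\im\jj$ for the new triple, and $\im\jj$ for the old triple (the last containment arising from the factorization $F(\e,R_2) \to F(\e,R_1) \to F(\alpha_1,r)$) then pins the new image to the old. For the second inclusion, the image identification comes directly from the factorization through $F(\alpha_1,r_1) \to F(\alpha_1,r_2)$, while the injectivity of the new composite is transported from the witness via bottom-row stability, with a minor case split on whether the witness $R''$ exceeds or falls short of $R$. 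Part \eqref{lem:A_4} is formal: intersections preserve pointwise containments, intersections of intervals are intervals, and monotone containments survive intersection. The main obstacle, I expect, will be part \eqref{lem:A_3}: the witness triple is a priori unrelated to the variable parameter, and the delicate point is confirming that the transported image actually equals the old $\im\jj$ rather than merely being contained in it---an equality that hinges on the diagonal composite already exhausting $\im\jj$ at the admissible triple, a surjectivity preserved under the bottom-row comparison isomorphism.
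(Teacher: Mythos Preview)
Your proposal is correct and follows essentially the same diagram-chasing strategy as the paper's proof. The only organizational difference appears in part~\eqref{lem:A_3}: the paper first argues by contrapositive that non-emptiness of the right-hand side forces a specific auxiliary membership (e.g., $(R_1,r_2,\alpha_1)\in A_x(\e)$) and then proceeds, whereas you work directly with an arbitrary witness and case-split on its position relative to the varying parameter---but the underlying squeeze and factorization arguments coincide.
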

The proof of Lemma \ref{lem:A} can be found in the Appendix.  The lemma
implies that $\alpha$-sections $A_{x}(\e | \alpha=\alpha')$ have a
structure reminiscent of a right isosceles triangle. More specifically,
if
\begin{align*}
    R_{u} &= \sup\{R\,|\, \exists r : (R,r)\in A_{x}(\e | \alpha=\alpha')\},\\
    R_{l} &= \inf\{R\,|\, \exists r : (R,r)\in A_{x}(\e | \alpha=\alpha')\},\\ 
    r_{u} &= \sup\{r\,|\, \exists R : (R,r)\in A_{x}(\e | \alpha=\alpha')\},\\
    r_{l} &= \inf\{r\,|\, \exists R : (R,r)\in A_{x}(\e | \alpha=\alpha')\},
\end{align*}
then it is the set bounded by the line segment connecting points $(R_{l},
r_{l})$ and $(R_{u}, r_{l})$, another line segment connecting $(R_{u},
r_{l})$ and $(R_{u}, r_{u})$, and a curve connecting $(R_{l}, r_{l})$
and $(R_{u}, r_{u})$, and such that it is 
a graph (in the $(R,r)$-plane) of a non-decreasing function
$f:[R_{l}, R_{u}]\to[r_l,r_u]$. We should note that this set may not
contain (parts of) its boundary. An example of a possible structure of
an $\alpha$-section is shown in Figure \ref{fig:alpha_sec}.

Looking back at Lemma \ref{lem:interleave}, we see that the ability to
interleave neighborhood of the form
$(D_{\e}(K)$, $D_{\e}(K)$ $-$ $B_{\rho}(x))$ with the appropriate
\v{C}ech or Vietoris-Rips complexes (so that Lemma $\ref{lem:homob}$ can
be invoked) requires a certain range of local scales at appropriately
chosen global scales. More specifically, starting with $(K, K -
B_{R}(x))$ (and an $\e$-sample) we need to increase the global scale and
decrease the local scale by some minimum amounts, so that an appropriate
simplicial complex can be fit in between. This results in a global scale
$\alpha_1$, and a local scale $\rho_1$. These scales need to be changed
further to kill any spurious homology that could have been created, thus
leading to scales $\alpha_2\geq\alpha_1$, $\rho_2\leq\rho_1$. The choice
of the next scale levels, $\alpha_3$ and $\rho_3$, has to be done so
that we, again, can fit our simplicial complex (at a larger scale) as
well as destroy any spurious homology born at the previous scale levels.

The above procedure mandates that, at least for small $\e$, the sets
$A_{x}(\e)$ contain a sufficiently large range of $\alpha$-sections,
each including a sufficiently large range of local scales. To formalize
these ideas, we denote a range of $\alpha$-sets by
$$
A_{x}(\e\,|\,
\alpha\in[\alpha_1,\alpha_2]) = \{(R,r,\alpha)\in A_{x}(\e)\,|\,
\alpha\in[\alpha_1,\alpha_2]\},
$$
call it an $\alpha$-slab, and introduce the following definition.

\begin{definition}
    The collection of sets $A_{x}(\e)$, $\e\geq0$, is called weakly
    seemly if there are $\e_x>0$ and functions
    $\alpha_{x}^{l},\alpha_{x}^{u}:[0,\e_x]\to\R_+$ satisfying the
    following:
    \begin{enumerate}
        \item $\alpha_{x}^{l}$ is non-decreasing and $\alpha_{x}^{u}$ is
            non-increasing;
        \item $\alpha_{x}^{l}(\e)\searrow 0=\alpha_{x}^{l}(0)$ and $\alpha_{x}^{u}(\e)\nearrow
            \alpha_{x}^{u}(0)$ as $\e\to 0$;
        \item $A_{x}(\e\,|\,\alpha=\alpha')\neq\emptyset$ for all
            $\alpha'\in[\alpha_{x}^{l}(\e), \alpha_{x}^{u}(\e)]$;
    \item 
    $\displaystyle 
    \sup_{\alpha'\in[\alpha_{x}^{l}(\e), \alpha_{x}^{u}(\e)]}{
        d_H(A_{x}(0\,|\, \alpha=\alpha'), A_{x}(\e\,|\,
        \alpha=\alpha'))}\to 0\quad\text{as}\quad
    \e\to 0.
    $
    \end{enumerate}
    In this case, we also say that $K$ is weakly homologically seemly at $x$.
\end{definition}

Recalling the structure of $A_{x}(0)$, we see that if $A_{x}(\e)$ is
weakly seemly then for all sufficiently small $\e$ there is a
non-shrinking $\alpha$-slab with $\alpha$-sections that are very close
to right isosceles triangles of Lemma \ref{lem:A_0}. Assuming weak
seemliness, in which case we only consider $\e\in[0,\e_x]$, let us
define
$$
A_{x}([0,\e]\,|\,\alpha\leq\beta) = 
\bigcap_{\e'\in[0,\e]}{
    \bigcap_{\alpha'\in[\alpha_{x}^{l}(\e'),\beta]}{
        A_{x}(\e'\,|\, \alpha=\alpha')}},
$$
where $\beta\in[\alpha_{x}^{l}(\e),\alpha_{x}^{u}(\e)]$. This is the
intersection of all $\alpha$-sections of $A_{x}(\e')$ between
$\alpha_{x}^{l}(\e')$ and $\beta$ for all $\e'\in[0,\e]$. It has the
structure of a typical $\alpha$-section, and if $\e$ is sufficiently
small, it is close to a right isosceles triangle.
To better quantify the difference between the two, we define
\begin{align*}
\RR_x(\e,\beta) &=
\sup\{R\,|\, \exists r : (R,r) \in A_{x}([0,\e] \,|\, \alpha \leq\beta)\},\\
\rr_x(\e,\beta) &=
\inf\{r\,|\, \exists R : (R,r)\in A_{x}([0,\e]\,|\,\alpha\leq\beta)\}.
\end{align*}
In addition, we denote by
$\Delta_x(\e,\beta, \delta)$ the interior of the triangle with vertices
$$
(\rr_x(\e, \beta) + \delta, \rr_x(\e, \beta)), (\RR_x(\e, \beta),
\rr_x(\e, \beta)), (\RR_x(\e, \beta), \RR_x(\e, \beta) - \delta),
$$
and let
$$
\dd_x(\e,\beta) = \inf\{\delta>0 \,|\, \Delta_x(\e, \beta,
\delta) \subseteq A_{x}([0,\e] \,|\, \alpha \leq \beta)\}.
$$
\begin{figure}
    \centering
    \includegraphics[width=0.6\textwidth]{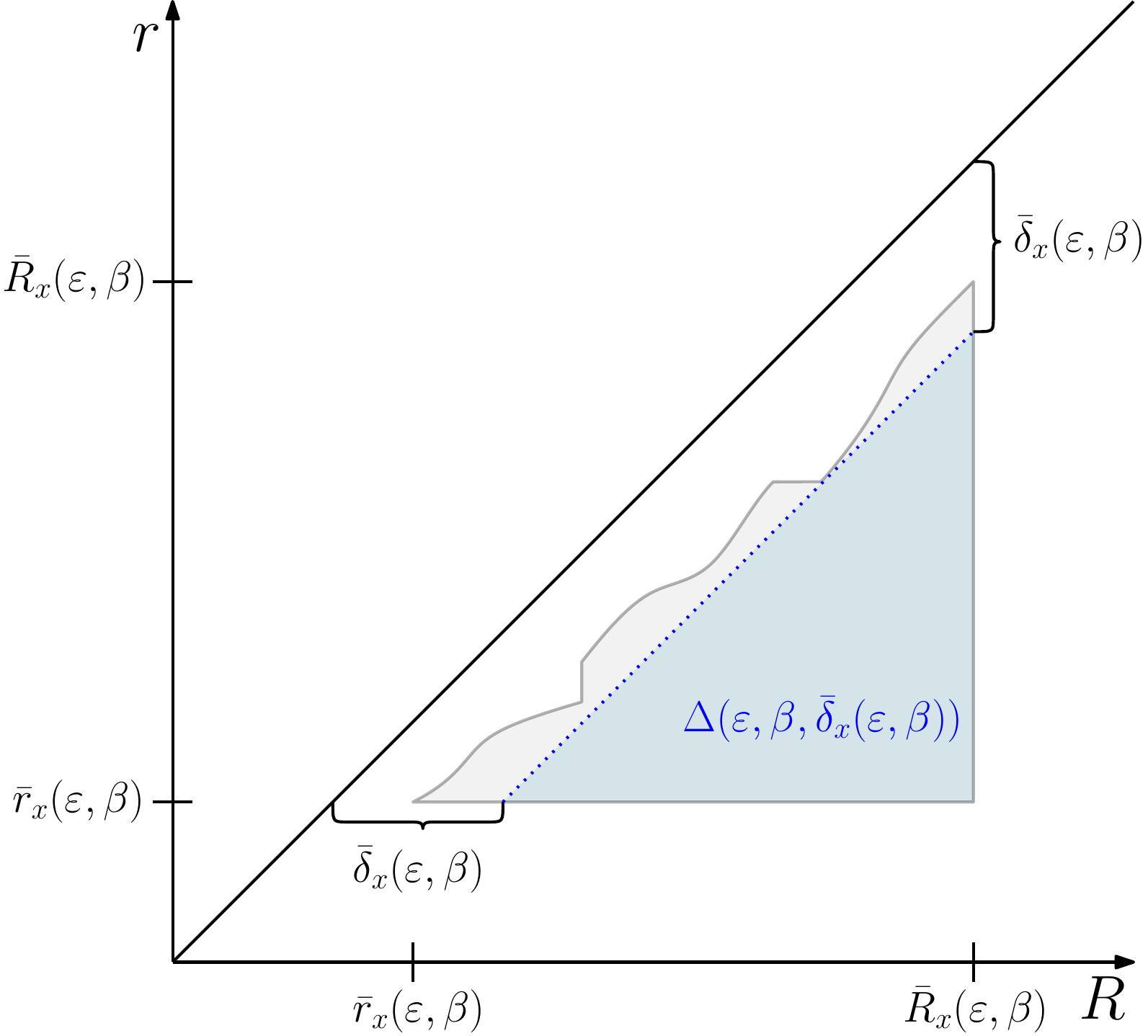}
    \caption{\label{fig:alpha_sec}
    An example of the structure of $A_{x}([0,\e] \,|\, \alpha \leq
    \beta)$, which is also the structure of an $\alpha$-section.
    }
\end{figure}
A possible structure of $A_{x}([0,\e] \,|\, \alpha \leq \beta)$, along
with the above defined quantities, is shown in Figure
\ref{fig:alpha_sec}. We now see that
the range of local scales at all the $\alpha$-sections (below $\beta$)
is determined by the quantity
$$
\tau_{x}(\e,\beta) =
\RR_{x}(\e,\beta)-\rr_{x}(\e,\beta)-\dd_{x}(\e,\beta),
$$
which is the length of the side of the largest right isosceles triangle
whose interior lies completely in $A_{x}([0,\e]\,|\,\alpha\leq\beta)$.
As we mentioned in our earlier discussion, we would like
$\tau_{x}(\e,\beta)$ to be sufficiently large for appropriately chosen
$\e$ and $\beta$, as dictated by Lemma \ref{lem:interleave}. Recalling
quantities $f_c(a,b)$ and $g_c(a,b)$ from \eqref{nom:func}, we let
$f(a,b)$ and $g(a,b)$ be continuous functions which decrease to zero as $a,
b$ $\to$ $0$. We define
\begin{equation}\label{eq:def_E}
E_{x}(f,g) = \left\{
    \e\,\left|\,
    \begin{aligned}
        &\tau_{x}(\e', \beta)>\gamma ,\,
        \e' = \alpha_{x}^{l}(g(0,\e)),\\
        &\beta = g(\e',\e),\,
        \gamma=f(0,\e)+f(\e',\e)
    \end{aligned}
    \right.
    \right\}
\end{equation}

The set $E_x(f,g)$ and quantities $\tau_{x}(\e,\beta)$,
$\RR_{x}(\e,\beta)$, $\rr_{x}(\e,\beta)$, and $\dd_{x}(\e,\beta)$ have
some useful properties. We list them in the following lemma, whose proof is
given in the Appendix.
\begin{lemma}\label{lem:E_prop}\leavevmode
    Suppose that $A_{x}(\e)$ is weakly seemly. Then the following holds:
    \begin{enumerate}
        \item If $\e_1\leq\e_2,$ and $\beta_1\leq\beta_2$ then
            $$
            \begin{aligned}
                \rr_{x}(\e_1,\beta_1)&\leq\rr_{x}(\e_2, \beta_2),\quad
                \RR_{x}(\e_1,\beta_1)\geq\RR_{x}(\e_2, \beta_2),\\
                \dd_{x}(\e_1,\beta_1)&\leq\dd_{x}(\e_2, \beta_2),\quad
                \tau_{x}(\e_1,\beta_1)\leq\tau_{x}(\e_2, \beta_2).
            \end{aligned}
            $$
        \item $\dd_{x}(\e,\beta)\to 0$ as $\e\to 0$ for all
            $\beta\in(0,\alpha_{x}^{u}(0))$.
            Also, $\rr_{x}(\e,\beta)\to 0$ as $\e,\beta\to 0$.
        \item $E_x(f,g)$ is a non-empty interval.
    \end{enumerate}
\end{lemma}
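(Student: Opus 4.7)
The proof rests on two structural observations. First, the family $A_{x}([0,\e]\,|\,\alpha\leq\beta)$ depends monotonically on $(\e,\beta)$: growing $(\e,\beta)$ enlarges the index set of the intersection, hence shrinks the resulting set. Second, for $\e=0$ the $\alpha$-section $A_{x}(0\,|\,\alpha=\alpha')$ is a right isosceles triangle by Lemma \ref{lem:A_0}, and for small $\e>0$ it is a Hausdorff-close perturbation of that triangle by weak seemliness property (4).

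For \emph{part (1)}, if $\e_1\leq\e_2$ and $\beta_1\leq\beta_2$, then the intersection defining $A_{x}([0,\e_2]\,|\,\alpha\leq\beta_2)$ ranges over a larger outer set $[0,\e_2]\supseteq[0,\e_1]$ and, for each fixed $\e'$, a larger inner set $[\alpha_{x}^{l}(\e'),\beta_2]\supseteq[\alpha_{x}^{l}(\e'),\beta_1]$, yielding $A_{x}([0,\e_1]\,|\,\alpha\leq\beta_1)\supseteq A_{x}([0,\e_2]\,|\,\alpha\leq\beta_2)$. The infimum of the $r$-coordinate over a smaller set can only grow, giving $\rr_{x}(\e_1,\beta_1)\leq\rr_{x}(\e_2,\beta_2)$; the supremum of the $R$-coordinate can only shrink, giving $\RR_{x}(\e_1,\beta_1)\geq\RR_{x}(\e_2,\beta_2)$. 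For $\dd_{x}$, a direct set-theoretic comparison (the open triangles $\Delta_{x}(\e,\beta,\delta)$ have vertices determined by $\rr_{x},\RR_{x}$, and the containment becomes harder as the ambient set shrinks) gives $\dd_{x}(\e_1,\beta_1)\leq\dd_{x}(\e_2,\beta_2)$. The $\tau_{x}$ inequality then follows by combining these three bounds through the formula $\tau_{x}=\RR_{x}-\rr_{x}-\dd_{x}$.

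For \emph{part (2)}, fix $\beta\in(0,\alpha_{x}^{u}(0))$ and $\eta>0$, and choose a right isosceles triangle $T_\eta$ with side length $\tau_{x}(0,\beta)-\eta$ whose closure lies in the interior of $A_{x}(0\,|\,\alpha=\beta)$. The nesting $A_{x}(0\,|\,\alpha=\alpha'_2)\subseteq A_{x}(0\,|\,\alpha=\alpha'_1)$ for $\alpha'_1\leq\alpha'_2$ (Lemma \ref{lem:A_0}) forces $T_\eta$ to lie inside every $A_{x}(0\,|\,\alpha=\alpha')$ for $\alpha'\in[0,\beta]$; the uniform Hausdorff closeness from weak seemliness property (4) then guarantees $T_\eta\subseteq A_{x}(\e'\,|\,\alpha=\alpha')$ for every $\e'\in[0,\e]$ and $\alpha'\in[\alpha_{x}^{l}(\e'),\beta]$, once $\e$ is small enough. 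Consequently $\dd_{x}(\e,\beta)\leq\eta$, giving the first limit. For $\rr_{x}(\e,\beta)\to 0$ as $\e,\beta\to 0$, the same nesting together with the fact that the triangles from Lemma \ref{lem:A_0} reach arbitrarily small $r$-values as $\alpha'\to 0$ (since $A_{x}(0\,|\,\alpha=\alpha')$ expands as $\alpha'$ decreases), propagated through the uniform Hausdorff approximation, yields the claim.

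For \emph{part (3)}, sufficiently small $\e$ forces $\e'=\alpha_{x}^{l}(g(0,\e))\to 0$ and $\beta=g(\e',\e)\to 0$ by property (2) of weak seemliness and $g\to 0$. Part (2) then makes $\rr_{x}(\e',\beta),\dd_{x}(\e',\beta)\to 0$, while the monotonicity of $\RR_{x}$ from part (1) keeps $\RR_{x}(\e',\beta)$ bounded below by a positive constant, so $\tau_{x}(\e',\beta)$ is bounded below by a positive constant while $\gamma=f(0,\e)+f(\e',\e)\to 0$. Hence $E_{x}(f,g)$ contains every sufficiently small $\e$ and is non-empty. For the interval property, continuity and monotonicity of $\alpha_{x}^{l},f,g$ combined with part (1) make $\e\mapsto\tau_{x}(\e',\beta)-\gamma$ vary monotonically in $\e$, so its positivity set is an interval. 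The main technical obstacle is part (2): threading a \emph{single} inscribed triangle through every factor of the intersection uniformly in $\e'$ and $\alpha'$, which requires carefully matching the interior-containment at $\e=0$ (from Lemma \ref{lem:A_0}) with the uniform Hausdorff bound from weak seemliness property (4).
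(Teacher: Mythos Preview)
Your proposal is correct and follows essentially the same strategy as the paper's proof: part (1) from the set containment $A_{x}([0,\e_1]\,|\,\alpha\leq\beta_1)\supseteq A_{x}([0,\e_2]\,|\,\alpha\leq\beta_2)$, part (2) from weak seemliness property (4) together with the triangular structure of $A_x(0\,|\,\alpha=\alpha')$ from Lemma~\ref{lem:A_0}, and part (3) from parts (1) and (2). The only noteworthy difference is in the argument for $\dd_{x}(\e,\beta)\to 0$: you construct an explicit inscribed triangle $T_\eta$ and thread it through every factor of the intersection via the uniform Hausdorff bound, whereas the paper simply observes the direct inequality
\[
0\leq \dd_{x}(\e,\beta)\leq \sup_{\alpha'\in[\alpha_{x}^{l}(\e),\alpha_{x}^{u}(\e)]} d_H\big(A_{x}(0\,|\,\alpha=\alpha'),\,A_{x}(\e\,|\,\alpha=\alpha')\big),
\]
which tends to zero by weak seemliness property (4). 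This one-line bound is slicker and avoids the ``threading'' step you flagged as the main technical obstacle; your argument is a more constructive unpacking of the same idea.
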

It is also useful to note that if $\e\in E_{x}(f,g)$ then
$
\Delta_{x}(\e', \beta, \dd_{x}(\e',\beta)+\gamma)\neq\emptyset,
$
where we retained the notation from \eqref{eq:def_E}.

Keeping in mind the assumptions and notation stated at the beginning of
the section, we can now prove the following theorem.
\begin{theorem}\label{thm:lh_recover}
    Suppose that $K$ is weakly homologically seemly, 
    $\e\in E_{x}(f_c,g_c)$, and $c(\e'+\e)<\conv(P)$, where
    $\e'=\alpha_{x}^{l}(g_c(0,\e))$. Let
    \begin{equation*}
        \beta = g_{c}(\e',\e),\quad \gamma=f_{c}(0,\e)+f_{c}(\e',\e).
    \end{equation*}
    Then for all
    $$
        (R',r') \in \Delta_{x}(\e', \beta, \dd_{x}(\e',\beta)+\gamma),
    $$
    and
    $$
        R=R'-g_1(0,\e),\quad r=r'+g_c(\e',\e),
    $$
    we have
    \begin{align*}
        \im\big( C(\e,R)\to C(\e'+\e, r) \big)&\approx\h(K,K-\{x\}),\quad c=1\\
        \im\big( V(\e,R)\to V(\e'+\e, r) \big)&\approx\h(K,K-\{x\}),\quad c=s
    \end{align*}
\end{theorem}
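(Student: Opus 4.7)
The plan is to invoke Lemma~\ref{lem:homob} with $B_1$ and $B_2$ taken to be the simplicial complexes from the statement---$C(\e, R)$ and $C(\e'+\e, r)$ in the \v{C}ech case, or $V(\e, R)$ and $V(\e'+\e, r)$ in the Vietoris--Rips case. A four-term chain $A_0 \to A_1 \to A_2 \to A_3$ of $F(\cdot,\cdot)$-groups that factors through $B_1$ and $B_2$ must be assembled, and then the two ``restriction is an isomorphism'' hypotheses of Lemma~\ref{lem:homob} must be verified.

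The assembly comes from applying Lemma~\ref{lem:interleave} twice. The first application, with $\delta = 0$, yields a factorization $F(0, R') \to B_1 \to F(\beta_1, r_1)$ where $r_1 = R' - f_c(0, \e)$ and $\beta_1$ may be chosen in the non-empty interval $[g_c(0, \e), \e']$---non-emptiness follows from $\e' = \alpha_{x}^{l}(g_c(0,\e)) \geq g_c(0,\e)$, a consequence of the constraint $\alpha \geq \e$ in the definition of $A(\e)$. The second application, with $\delta = \e'$, yields a factorization $F(\e', R_2) \to B_2 \to F(\beta, r')$ valid whenever $R_2 \geq r' + f_c(\e', \e)$. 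Choosing $\beta_1 = g_c(0,\e)$ and an appropriate $R_2 \in [r' + f_c(\e', \e), r_1]$, the condition $(R', r') \in \Delta_x(\e', \beta, \dd_x(\e',\beta) + \gamma)$---giving $R' - r' > \dd_x(\e',\beta) + \gamma$---makes the middle inclusion $F(\beta_1, r_1) \to F(\e', R_2)$ available, so the four-term chain is in place.

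For the hypotheses of Lemma~\ref{lem:homob}, the key claim is that the triples $(r_1, R_2, \e')$ and $(R_2, r', \beta)$ belong to $A_x(\beta_1)$ and $A_x(\e')$ respectively---equivalently, that both pairs $(r_1, R_2)$ and $(R_2, r')$ lie in $A_x([0, \e']\,|\,\alpha \leq \beta)$ for a suitable $R_2$. Once this is established, diagram~\eqref{eq:def_A} identifies $\im{\vphi_0}$ with $\ii(F(0, r_1))$, which sits injectively inside $A_1$, and shows that $\vphi_1$ restricts to an isomorphism $\ii(F(0, r_1)) \to \jj(\ii(F(0, r_1))) = \im{\vphi_1}$; the analogous argument with the triple $(R_2, r', \beta)$ handles the restriction $\im{\vphi_1} \to \im{\vphi_2}$. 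Lemma~\ref{lem:homob} then yields $\im{(B_1 \to B_2)} \approx \im{\vphi_1} \approx \h(K, K - \{x\})$.

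The main obstacle I expect is precisely this key claim: verifying that the intermediate pairs truly lie inside $A_x([0,\e']\,|\,\alpha \leq \beta)$. This is where the hypothesis $\e \in E_x(f_c, g_c)$---equivalent to $\tau_x(\e', \beta) > \gamma$---earns its keep, because it provides the slack $R' - r' > \dd_x(\e', \beta) + \gamma$ needed to position $R_2$ so that the sub-gaps $r_1 - R_2$ and $R_2 - r'$ each respect the diagonal constraints of the relevant $\alpha$-sections. The monotonicity properties collected in Lemmas~\ref{lem:A} and~\ref{lem:E_prop} should be the primary tool for making this positioning rigorous.
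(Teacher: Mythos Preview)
Your proposal follows the paper's proof almost exactly: both build the chain
\[
F(0,R')\xrightarrow{\vphi_0} F(g_c(0,\e),\,R'-f_c(0,\e))\xrightarrow{\vphi_1} F(\e',R_2)\xrightarrow{\vphi_2} F(\beta,r'),
\]
apply Lemma~\ref{lem:interleave} at $\delta=0$ and $\delta=\e'$ to insert the simplicial groups, and finish with Lemma~\ref{lem:homob}. The paper makes two choices you leave implicit. First, it fixes $R_2=R'-f_c(0,\e)-\dd_x(\e',\beta)$; you should do the same rather than leave $R_2$ floating, because the verification that $(r_1,R_2)$ and $(R_2,r')$ land in the right $\alpha$-sections hinges on this value. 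Second, and more importantly, the paper records a \emph{third} membership that you omit: $(R',r_1)\in A_x(0\,|\,\alpha=g_c(0,\e))$. Your sentence ``diagram~\eqref{eq:def_A} identifies $\im\vphi_0$ with $\ii(F(0,r_1))$'' is not justified by the triple $(r_1,R_2,\e')\in A_x(\beta_1)$ alone---that instance of~\eqref{eq:def_A} has $\ii:F(0,r_1)\to F(\beta_1,r_1)$ and says nothing about $\vphi_0:F(0,R')\to F(\beta_1,r_1)$. The equality $\im\vphi_0=\im\ii$ needs the isomorphism $F(0,R')\to F(0,r_1)$, which is exactly what the bottom row of~\eqref{eq:def_A} for the triple $(R',r_1,\beta_1)\in A_x(0)$ supplies (or, equivalently, what Lemma~\ref{lem:A_0} gives directly). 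Add this step and your argument matches the paper's.
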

\begin{proof}
    The conditions of the lemma imply that all $\alpha$-sections
    below $\beta$ in any $A_{x}(\tilde{\e})$, $\tilde{\e}\in[0,\e']$,
    contain $\Delta_x(\e',\beta, \dd_{x}(\e',\beta))$, and
    the side length of this triangle is greater than $\gamma$.
    In particular,
    \begin{align*}
        &(R', R'-f_c(0,\e))\in A_{x}(0\,|\,\alpha=g_c(0,\e)),\\
        &(R'-f_c(0,\e), R'-f_c(0,\e)-\dd_{x}(\e',\beta))\in
        A_{x}(g_c(0,\e)\,|\,\alpha=\e'),\\
        &(R'-f_c(0,\e)-\dd_{x}(\e',\beta), r')\in
        A_{x}(\e'\,|\,\alpha=\beta).
    \end{align*}

    We have the following inclusion induced homomorphisms:
    $$
    \begin{tikzcd}[row sep=small]
        F(g_c(0,\e), R'-f_c(0,\e))\ar[r, "\vphi_1"]&
        F(\e', R'-f_c(0,\e)-\dd_{x}(\e',\beta))\ar[d, "\vphi_2"]\\
        F(0,R')\ar[u, "\vphi_0"] & F(\beta, r')
    \end{tikzcd}
    $$
    By construction, restrictions $\im{\vphi_i}$ $\to$ $\im{\vphi_{i+1}}$,
    $i=0,1$, are isomorphisms, with $\im{\vphi_i} \approx
    \h(K,K-\{x\})$, $i=0,1,2$. Lemma \ref{lem:interleave} implies that
    the above sequence of homomorphisms factors as follows:
    \begin{equation}\label{eq:lh_diag}
        \begin{tikzcd}[column sep=small, row sep=small,
        /tikz/column 1/.append style={anchor=base west},
        /tikz/column 4/.append style={anchor=base east},
        /tikz/column 5/.append style={anchor=base east}]
        F(g_c(0,\e), R'-f_c(0,\e))\ar[rrr]&[-5em]&&[-5em]
        F(\e', R'-f_c(0,\e)-\dd_{x}(\e',\beta))\ar[d] \\
        L_c(\e, R)\ar[u] &F(0,R')\ar[l]&
        F(\beta, r') & L_c(\e'+\e, r)\ar[l]
    \end{tikzcd}
    \end{equation}
    where $L_1(a,b) = C(a,b)$ and $L_s(a,b) = V(a,b)$. Therefore, by
    Lemma \ref{lem:homob}
    $$\im(L_c(\e,R) \to L_c(\e'+\e, r)) \approx \h(K,K-\{x\}),$$
    thus yielding the required result.
\end{proof}
Thus, as long as $K$ is weakly homologically seemly at $x\in K$, the
local homology at $x$ can be recovered from the relative homology of
\v{C}ech or Vietoris-Rips complexes of an $\e$-sample, for $\e$
sufficiently small. The range of appropriate global and local scales are
essentially determined by the behavior of the functions
$\alpha_{x}^{l}(\cdot)$ and $\tau_{x}(\cdot, \cdot)$. Indeed, if
$\tau_{x}(\e,\beta)>0$ for some $\e>0$, $\beta\in[\alpha_{x}^{l}(\e),
\alpha_{x}^{u}(\e)]$, and if we can estimate how fast $\alpha_{x}(\cdot)$
decreases and $\tau_{x}(\cdot,\beta)$ increases as $\e\to 0$, then we
can determine how much $\e$ needs to be reduced so that the conditions
of Theorem \ref{thm:lh_recover} can be satisfied. This behavior
significantly simplifies if the ``gaps'' $\dd_{x}(\e,\beta)$
and $\alpha_{x}^{l}(\e)-\e$ are zero.
\begin{definition}
    The collection of sets $A_{x}(\e)$, $\e\geq0$, is called moderately
    seemly if it is weakly seemly and there are non-negative functions
    $\rho^{u}_{x}(\e,\alpha)$, $\rho^{l}_{x}(\e,\alpha)$ defined for
    $\e\in[0,\e_{x}]$, $\alpha\in[\alpha_{x}^{l}(\e),\alpha_{x}^{u}(\e)]$,
    such that the following conditions are satisfied:
    \begin{enumerate}
        \item $\rho^{u}_{x}$ and $\rho^{l}_{x}$ are, respectively,
            non-increasing and non-decreasing with respect to $\e$ and
            $\alpha$;
    \item $\rho^{l}_{x}(\e,\alpha)<\rho^{u}_{x}(\e,\alpha)$;
    \item $\rho^{l}_{x}(\e,\alpha)\searrow\rr_{x}(0,\alpha)$ and
        $\rho^{u}_{x}(\e,\alpha)\nearrow\RR_{x}(0,\alpha)$ as $\e\to 0$;
    \item for all
        $\rho\in[\rho^{l}_{x}(\e,\alpha),\rho^{u}_{x}(\e,\alpha)]$,
            $\e>0$, we have $(\rho,\rho,\alpha)\in A_{x}(\e)$.
    \end{enumerate}
    If, in addition, $\alpha_{x}^{u}(\e)=\e_x$, $\alpha_{x}^{l}(\e)=\e$,
    $\e\in[0,\e_x]$, then we call $A_{x}(\e)$ strongly seemly.  In
    either case, we also say that $K$ is moderately (respectively, strongly)
    homologically seemly at $x$.
\end{definition}
For a moderately seemly $A_{x}(\e)$, each $\alpha$-section of
$$A_{x}(\e\,|\, \alpha\in[\alpha_{x}^{l}(\e),\alpha_{x}^{u}(\e)])$$
contains a right isoseles triangle whose hypotenuse is the diagonal line
segment connecting
$$
(\rho^{l}_{x}(\e,\alpha), \rho^{l}_{x}(\e,\alpha))\text{ and }
(\rho^{u}_{x}(\e,\alpha), \rho^{u}_{x}(\e,\alpha)),
$$
and these triangles expand to the corresponding $\alpha$-sections of $A_{x}(0)$ as
$\e\to 0$. In this case,
we redefine
$$
\rr_{x}(\e,\beta) = \rho_{x}^{l}(\e,\beta),
\RR_{x}(\e,\beta)=\rho^{u}_{x}(\e,\beta),
$$
which implies
that $\dd_{x}(\e,\beta)=0$ for
$\e\in[0,\e_x]$ and $\beta\in[\alpha_{x}^{l}(\e), \alpha_{x}^{u}(\e)]$.
Hence, the quantity $\tau_{x}(\e,\beta)$ is simply the length of the
interval $(\rr_{x}(\e,\beta), \RR_{x}(\e,\beta))$, and the set
$E_{x}(f,g)$ is defined to contain $\e$ for which such intervals are
appropriately large at the required global scales. While these global
scales are still affected by the behavior of $\alpha_{x}^{l}(\cdot)$, it
is important that the range of local scales in Theorem
\ref{thm:lh_recover} is no longer restricted by $\dd_{x}(\e,\beta)$.

Looking back at the diagram \eqref{eq:def_A}, we can deduce that strong
seemliness of $A_{x}(\e)$ implies that for $\e\in[0,\e_x]$ the set
$A_{x}(\e\,|\,\alpha\in[\e,\e_x])$ consists
of triples $(R,r,\alpha)$ such that
$$
\h(D_{\alpha}(K), D_{\alpha}(K) - B_{\rho}(x)) \approx \h(K,
K-\{x\})
$$
for all $\rho\in[R,r]$. In this case, the (redefined)
quantities $\RR_{x}(\e,\beta)$ and
$\rr_{x}(\e,\beta)$ no longer depend on the first argument, so
$$
\RR_{x}(\e,\beta) = \RR_{x}(\beta),\quad
\rr_{x}(\e,\beta) = \rr_{x}(\beta),\quad
\tau_{x}(\e,\beta) = \tau_{x}(\beta).
$$
The latter
is now the only quantity that dictates appropriate global and local
scales, and it is the length of the interval consisting of local scales
$\rho>\beta$ such that
$$
\h(D_{\beta}(K), D_{\beta}(K) - B_{\rho}(x)) \approx \h(K, K-\{x\}).
$$

The definition of the set $E_{x}(f,g)$ simplifies:
$$
E_{x}(f,g) = \{\e\,|\,\tau_{x}(g(g(0,\e),\e))>f(0,\e)+f(g(0,\e), \e)\}
$$
Recalling notation from \eqref{nom:func}, we can see that
$$
    \begin{array}{llll}
        g_c(0,\e) &= (c+t)\e,&\quad g_c(g_c(0,\e),\e) &= (c^2+(t+1)c+t)\e,\\
        f_c(0,\e) &= (c+2t+1)\e,&\quad f_c(g_c(0,\e),\e) &= (c^2+(t+2)c+3t+1)\e.
    \end{array}
$$
Then $\e\in E_{x}(f_c,g_c)$ iff
$$
\tau_{x}((c^2+(t+1)c+t)\e) > (c^2+(t+3)c+5t+2)\e.
$$
Thus, in the case of strong seemliness, the statement of Theorem
\ref{thm:lh_recover} can be strengthened.
\begin{corollary}\label{cor:lh_strong_recover}
    Suppose that $K$ is strongly homologically seemly, and
    $\e$ is such that $c(1+c+t)\e<\conv(P)$
    and $\tau_{x}(\beta)>\gamma$, where
    $$
    \beta=(c^2+(t+1)c+t)\e,\quad \gamma=(c^2+(t+3)c+5t+2)\e.
    $$
    Then for all
    $$
    R'\in (\rr_{x}(\beta)+\gamma, \RR_{x}(\beta)),\quad
    r'\in(\rr(\beta), R'-\gamma),
    $$
    and
    $$
        R=R'-(1+t)\e,\quad
        r=r'+\beta,
    $$
    we have
    \begin{align*}
        \im\big( C(\e,R)\to C((1+c+t)\e, r) \big)&\approx\h(K,K-\{x\}),\quad c=1\\
        \im\big( V(\e,R)\to V((1+c+t)\e, r) \big)&\approx\h(K,K-\{x\}),\quad c=s
    \end{align*}
\end{corollary}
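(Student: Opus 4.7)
The plan is to show that Corollary \ref{cor:lh_strong_recover} is essentially an unpacking of Theorem \ref{thm:lh_recover} under the additional rigidity supplied by strong seemliness. First, I would invoke the definition of strong seemliness to obtain $\alpha_{x}^{l}(\e)=\e$ and $\alpha_{x}^{u}(\e)=\e_x$ on $[0,\e_x]$. Setting $\e'=\alpha_{x}^{l}(g_c(0,\e))=g_c(0,\e)=(c+t)\e$, the convexity hypothesis of Theorem \ref{thm:lh_recover}, namely $c(\e'+\e)<\conv(P)$, becomes $c(1+c+t)\e<\conv(P)$, matching the hypothesis stated in the corollary; and the scale $\e'+\e$ appearing in the conclusion of the theorem simplifies to $(1+c+t)\e$, giving the complexes $C((1+c+t)\e,r)$ or $V((1+c+t)\e,r)$.

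Next I would compute the two auxiliary quantities dictated by Theorem \ref{thm:lh_recover} using \eqref{nom:func}. A direct substitution yields
\begin{equation*}
    \beta=g_c(\e',\e)=(c^2+(t+1)c+t)\e,\qquad
    \gamma=f_c(0,\e)+f_c(\e',\e)=(c^2+(t+3)c+5t+2)\e,
\end{equation*}
which are precisely the values appearing in the corollary. The rewriting $R=R'-g_1(0,\e)=R'-(1+t)\e$ and $r=r'+g_c(\e',\e)=r'+\beta$ is then immediate.

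The heart of the simplification is the observation that, under strong seemliness, the gap $\dd_{x}(\e',\beta)$ collapses to zero and the quantities $\RR_{x}(\cdot,\beta)$, $\rr_{x}(\cdot,\beta)$ no longer depend on the first argument. I would justify this by unfolding the redefinitions $\rr_{x}(\e,\beta)=\rho_{x}^{l}(\e,\beta)$, $\RR_{x}(\e,\beta)=\rho_{x}^{u}(\e,\beta)$ and citing the discussion right after the definition of strong seemliness: every diagonal point $(\rho,\rho,\alpha)$ in the appropriate range lies in $A_{x}(\e)$, so each $\alpha$-section contains a triangle whose hypotenuse sits on the diagonal. Consequently $\Delta_{x}(\e',\beta,\dd_{x}(\e',\beta)+\gamma)=\Delta_{x}(\e',\beta,\gamma)$, and its interior is exactly the set of pairs $(R',r')$ with $R'\in(\rr_{x}(\beta)+\gamma,\RR_{x}(\beta))$ and $r'\in(\rr_{x}(\beta),R'-\gamma)$, which is how the admissible region in the corollary is parameterized. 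The assumption $\tau_{x}(\beta)>\gamma$ guarantees that this open triangle is non-empty, which in the strong seemliness formulation replaces the condition $\e\in E_{x}(f_c,g_c)$ from the theorem.

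With these identifications in hand, the last step is a direct appeal to Theorem \ref{thm:lh_recover}, whose conclusion $\im(L_c(\e,R)\to L_c(\e'+\e,r))\approx \h(K,K-\{x\})$ translates verbatim into the two asserted isomorphisms for $c=1$ (\v{C}ech) and $c=s$ (Vietoris--Rips). The only mildly delicate point, and the one I would double-check carefully, is that the specific algebraic simplifications of $\beta$, $\gamma$, and of the convexity constraint genuinely agree with the statement of the corollary; otherwise the argument is a routine specialization and presents no real obstacle.
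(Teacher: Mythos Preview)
Your proposal is correct and follows essentially the same approach as the paper: the corollary is obtained by specializing Theorem \ref{thm:lh_recover} using the simplifications that strong seemliness affords, namely $\alpha_{x}^{l}(\e)=\e$, $\dd_{x}(\e',\beta)=0$, and the independence of $\RR_{x}$, $\rr_{x}$, $\tau_{x}$ from the first argument, together with the explicit evaluations of $g_c$ and $f_c$ already recorded in the paper just before the corollary. The identification of $\Delta_{x}(\e',\beta,\gamma)$ with the rectangle-like region $\{(R',r'):R'\in(\rr_{x}(\beta)+\gamma,\RR_{x}(\beta)),\ r'\in(\rr_{x}(\beta),R'-\gamma)\}$ and the translation of $\e\in E_{x}(f_c,g_c)$ into $\tau_{x}(\beta)>\gamma$ are exactly as intended.
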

We can make the result even more concrete if we understand the behavior
of $\tau_{x}(\beta)$. Note that
$$
\tau_{x}(\beta) = \RR_{x}(\beta)-\rr_{x}(\beta) \geq
\RR_{x}(\e_x)-\rr_{x}(\beta) = M_0-\rr_{x}(\beta),
$$
where we let $M_0=\RR_{x}(\e_x)$ for convenience.
Hence, if $\rr_{x}(\beta)\leq M\beta^{m}$, for some $M>0$ and $\beta\in[0,\e_x]$, condition
$\tau_{x}(\beta)>\gamma$ will be satisfied if $M_0-M\beta^m>\gamma$.

\begin{corollary}\label{cor:lh_strong_simple}
    Keeping the assumptions and notation of Corollary
    \ref{cor:lh_strong_recover}, assume, in addition, that
    $\rr_{x}(\beta)\leq M\beta^{m}$ for $\beta\in[0,\e_x]$, where
    $M,m>0$. If $\e>0$ is such that
    $$
        M(c^2+(t+1)c+t)^m\e^m+(c^2+(3+t)c+5t+2)\e<M_0=\RR_{x}(\e_x),
    $$
    then the result of Corollary \ref{cor:lh_strong_recover} holds for
    \begin{align*}
        &R'\in\big(M(c^2+(t+1)c+t)^m\e^m+(c^2+(3+t)c+5t+2)\e, M_0\big),\\
        &r'\in\big(M(c^2+(t+1)c+t)^m\e^m, R'-(c^2+(3+t)c+5t+2)\e\big).
    \end{align*}
\end{corollary}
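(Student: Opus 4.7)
The plan is to deduce Corollary \ref{cor:lh_strong_simple} from Corollary \ref{cor:lh_strong_recover} by a direct inequality argument: the only thing that needs to be checked is that the polynomial condition on $\e$ stated here implies the abstract condition $\tau_{x}(\beta)>\gamma$ required by Corollary \ref{cor:lh_strong_recover}, and then that the claimed intervals for $R'$ and $r'$ sit inside the intervals prescribed there.

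First I would invoke the observation made in the paragraph preceding Corollary \ref{cor:lh_strong_recover}: under strong seemliness the functions $\RR_{x}$ and $\rr_{x}$ do not depend on their first argument, and Lemma \ref{lem:E_prop}(1) says that $\RR_{x}(\beta)$ is non-increasing in $\beta$. Hence, as long as $\beta\leq\e_{x}$, we have $\RR_{x}(\beta)\geq\RR_{x}(\e_{x})=M_{0}$. Combining this with the postulated polynomial bound $\rr_{x}(\beta)\leq M\beta^{m}$ gives
$$
\tau_{x}(\beta) \;=\; \RR_{x}(\beta)-\rr_{x}(\beta) \;\geq\; M_{0}-M\beta^{m}.
$$
Substituting $\beta=(c^{2}+(t+1)c+t)\e$, the hypothesis of the corollary is precisely the assertion $M_{0}-M\beta^{m}>\gamma$, so $\tau_{x}(\beta)>\gamma$ and Corollary \ref{cor:lh_strong_recover} applies.

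To finish, I would translate the admissible ranges. Corollary \ref{cor:lh_strong_recover} allows $R'\in(\rr_{x}(\beta)+\gamma,\RR_{x}(\beta))$; using $\rr_{x}(\beta)\leq M(c^{2}+(t+1)c+t)^{m}\e^{m}$ and $\RR_{x}(\beta)\geq M_{0}$ shows that the interval for $R'$ stated in Corollary \ref{cor:lh_strong_simple} is a subset of the one in Corollary \ref{cor:lh_strong_recover}. An entirely parallel containment bounds the $r'$-interval. Consequently, the image-of-homomorphism statement carries over verbatim.

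I do not expect any real obstacle here; the corollary is a bookkeeping specialization that makes $\tau_{x}(\beta)>\gamma$ verifiable from a polynomial bound. The only mild subtlety worth flagging is verifying $\beta\in[\e,\e_{x}]$, so that both the polynomial bound on $\rr_{x}$ and the monotonicity estimate $\RR_{x}(\beta)\geq M_{0}$ are in force. Since $\gamma>0$ forces $M\beta^{m}<M_{0}$, and since $\e\leq\beta$ by construction, this reduces to $\e$ being small enough that $\beta\leq\e_{x}$, which is implicit in the standing hypothesis $M\beta^{m}+\gamma<M_{0}$ combined with $M_{0}=\RR_{x}(\e_{x})$.
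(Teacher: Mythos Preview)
Your proposal is correct and follows the paper's own reasoning: the paper does not give a separate proof of this corollary but derives the key inequality $\tau_{x}(\beta)\geq M_{0}-\rr_{x}(\beta)\geq M_{0}-M\beta^{m}$ in the paragraph immediately preceding the statement, which is exactly what you do, and the translation of the $R',r'$ intervals is then immediate from Corollary~\ref{cor:lh_strong_recover}. One small remark: your final sentence asserts that $\beta\leq\e_{x}$ is \emph{implied} by $M\beta^{m}+\gamma<M_{0}$, but that implication does not hold in general; rather, $\beta\leq\e_{x}$ is already part of the standing hypotheses inherited from Corollary~\ref{cor:lh_strong_recover} (strong seemliness is only formulated for $\e\in[0,\e_{x}]$ and $\beta$ in the admissible $\alpha$-range), so no additional argument is needed there.
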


In general, estimating the behavior of $\rr_{x}(\beta)$ can be extremely
challenging. However, it is feasible for simple cases, like the example
in Figure \ref{fig:num_example}.  In particular, taking the point $x$ to
be a $0$-dimensional stratum of the example, one can obtain a (rough)
estimate: $\rr_{x}(\beta) \leq \sqrt{3}\beta$, for sufficiently
small $\beta$.

\subsection{Homological seemliness of a set}
Given a set $L\subseteq K$, we define $A_{L}(\e) = \bigcap_{x\in
L}{A_{x}(\e)}$. Since properties of Lemma \ref{lem:A} are preserved
under intersections, the concept of seemliness as well as all the
related quantities, which we defined for the case $L=\{x\}$, can be
readily extended to the case of any $L\subseteq K$ by formally
substituting $L$ for $x$ in the corresponding definitions.  However,
even for simple stratified sets, for example the set in Figure
\ref{fig:num_example}, whose stratification consists of the two
endpoints of the chord, the open chord, and the two open arcs, we may
have $A_{X}(0)=\emptyset$ for at least some $X\in\st$.  One can notice
that the culprit behind the trouble is the boundary of $X$, $\partial X
= \cl{X}-X$, since $A_{x}(0)$ decreases as $x$ gets closer to $\partial
X$. Hence, given $X\in\st$, we take a small $w>0$ and denote $X^{w}$ $=$
$X-B_{w}(\partial X)$.
\begin{lemma}\label{lem:A_0_X}
    Given a small enough $w>0$, we have
    $A_{X^{w}}(0\,|\,\alpha=\alpha')\neq\emptyset$ for all
    sufficiently small $\alpha'>0$, and these $\alpha$-sections have the
    structure described in Lemma \ref{lem:A_0}.
\end{lemma}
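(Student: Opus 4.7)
The plan is to upgrade the argument from Lemma \ref{lem:A_0}—which produces a right isosceles triangle structure of $A_{x}(0|\alpha=\alpha')$ at each fixed $x$—to a uniform statement over $X^{w}$. The key observation is that $\cl{X^{w}}$ (closure taken in $K$) is a compact subset of $X$: since $X^{w}$ lies at distance at least $w$ from $\partial X$, so does its closure, and hence $\cl{X^{w}}$ avoids $\cl{X}-X=\partial X$ entirely. Uniform control over this compact set of the constants that arise in Lemma \ref{lem:A_0} will give a common triangle inside every $A_{x}(0|\alpha=\alpha')$, which is exactly what is required.

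First I would apply condition (2) of the tame homology stratification: at each $x\in X$ there is a radius $\eta_{x}>0$ such that for all $\rho\in(0,\eta_{x}]$ the inclusion induced maps $\hl(K,K-B_{\rho}(x))\to\hl(K,K-\{y\})$ are isomorphisms for every $y\in B_{\rho}(x)\cap X$. Since that condition is already formulated with built-in uniformity on nearby points, a finite cover argument on $\cl{X^{w}}$ produces a single $\eta_{w}>0$ that works for all $x\in\cl{X^{w}}$ simultaneously. For the topological ball requirement on $D_{\rho}(x)$ (and, in the Whitney case, for transversality of $S_{\rho}(x)$ to the strata), local strong convexity of $\X$ together with $\conv(\cl{X^{w}})>0$ yields a uniform upper bound on admissible $R$; transversality follows for almost every small $\rho$ by a Sard-type argument, and compactness again lets one pass to a uniform value.

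Second I would handle the thickening condition—injectivity of $F(0,R)\to F(\alpha,r)$ and the isomorphism of its image with $\h(K,K-\{x\})$—uniformly for small $\alpha>0$. Here the neighborhood retract hypothesis on $K$ is essential: a retraction of a neighborhood of $K$ back onto $K$ produces, for $\alpha$ sufficiently small, commuting diagrams that combine with the uniform constants above to give the required injections at every $x\in\cl{X^{w}}$ at once. The resulting common triangle $\Delta$ sits in every $A_{x}(0|\alpha=\alpha')$, hence in $A_{X^{w}}(0|\alpha=\alpha')$, giving non-emptiness; Lemma \ref{lem:A}(\ref{lem:A_4}) then transports the triangle structure through the intersection.

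The main obstacle I anticipate is the uniformity of this thickening step. Pointwise isomorphisms are guaranteed by the tame homology stratification, but promoting them to a uniform statement near $\partial X$—where lower-height strata start to influence $D_{\alpha}(K)$—requires combining the Frontier Condition with the compactness of $\cl{X^{w}}$ and its positive distance $w$ from $\partial X$. A proof by contradiction, extracting a convergent subsequence in $\cl{X^{w}}$ and exploiting continuity of the inclusion induced homomorphisms along with condition (2) applied at the limit point, should close this gap and complete the argument.
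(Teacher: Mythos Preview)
Your plan matches the paper's proof: pass to the compact set $\cl{X^{w}}\subseteq X$, use condition (2) of the homology stratification together with a compactness contradiction to obtain a uniform radius $\bar\rho>0$ over which the required local-homology isomorphisms hold, and then invoke the neighborhood retraction $\pi$ (with a second compactness contradiction, this time over $L\times[0,\rho_u]$) to produce a uniform $\bar\e(\beta)>0$ for which $\pi_*\circ i_*$ being an isomorphism forces $i_*:F(0,\rho')\to F(\e,\rho')$ to be injective; the triangle structure then follows by intersecting the triangles from Lemma~\ref{lem:A_0}.

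One small correction: in the Whitney case, condition (1) in the definition of $A_x(\e)$ requires $S_\rho(x)\pitchfork K$ for \emph{every} $\rho\in(0,R]$, so a Sard-type argument---which only yields transversality for almost every $\rho$---is not what you want, and compactness cannot upgrade ``almost every'' to ``an entire interval.'' The correct mechanism is that the Whitney conditions themselves ensure the squared-distance function from $x$ has no stratified critical points in a punctured neighborhood of $x$, which gives transversality for all sufficiently small $\rho$ outright; the paper simply cites ``properties of Whitney stratified sets'' for this step.
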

This lemma, whose proof can be found in the Appendix, implies that we
could recover local homology at each $x\in X^{w}$ from
an $\e$-sample if $A_{X^{w}}(\e)$ is at least weakly seemly.

\begin{definition}
    We say that $K$ is weakly (moderately, strongly) locally homologically
    seemly if for each $X\in\st$ the collection of sets
    $A_{X^{w}}(\e)$, $\e\geq 0$, is weakly (resp. moderately, strongly)
    seemly for all sufficiently small $w>0$.
\end{definition}

Note that due to compactness of $K$, its stratification, $\st$, is
finite. Let $h$ be the maximal height of the strata in $\st$, $h =
\max_{X\in\st}{\height{X}}$, and let $n_i=\card{\st_i}$, where $\st_i$
denotes the strata of height $i$. Imposing an arbitrary total order on
$\st_i$, $i=0,\ldots,h$, let $X_{ij}$ denote the
$j$-th stratum of $\st_i$, $1\leq j\leq n_i$. Let
$\bm{w}^i$ $=$ $(w_1^i,\ldots,w_{n_i}^i)\in\R^{n_i}_+$,
and $\bm{w}$ $=$ $(\bm{w}^0,$ $\ldots,$ $\bm{w}^{h-1})$ $\in$
$\R^{n}_{+}$, where $n=n_0+\cdots+n_{h-1}$. We denote
$$
X_{ij}^{\bm{w}} = X_{ij} -
\bigcup_{k=0}^{i-1}\bigcup_{m=1}^{n_k}{B_{w_m^k}(X_{km})},\quad
K^{\bm{w}} =
\bigcup_{i=0}^{h}\bigcup_{j=1}^{n_i}{X_{ij}^{\bm{w}}}
$$
It is useful to note that for sufficiently small $\bm{w}$ the sets
$X_{ij}^{\bm{w}}$ are non-empty and pairwise disjoint.
\begin{lemma}\label{lem:seemly_K}
    If $K$ is weakly (moderately, strongly) locally homologically
    seemly then $A_{K^{\bm{w}}}(\e)$ is weakly (resp. moderately,
    strongly) seemly for all sufficiently small $\bm{w}\in\R^{n}_+$.
\end{lemma}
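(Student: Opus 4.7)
The plan exploits finiteness of $\st$ (a consequence of compactness of $K$ together with local finiteness) to reduce the statement to combining finitely many stratum-wise seemliness witnesses via $\min$ and $\max$. The main observation is that $K^{\bm{w}} = \bigsqcup_{i,j} X_{ij}^{\bm{w}}$, so
$A_{K^{\bm{w}}}(\e) = \bigcap_{i,j} A_{X_{ij}^{\bm{w}}}(\e)$
is a finite intersection, and Lemma \ref{lem:A}(4) guarantees that the structural properties of $A_{x}$ (line sections being intervals, monotonicity, containments of Lemma \ref{lem:A}(3)) survive the intersection.

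First I would show that for $\bm{w}$ sufficiently small, each $A_{X_{ij}^{\bm{w}}}(\e)$ is weakly (moderately, strongly) seemly, inheriting witnessing data from the sets $A_{X_{ij}^{w}}(\e)$ that appear in the hypothesis of local homological seemliness. Using the Frontier Condition and the definition of height ($\partial X_{ij} \subseteq \bigcup_{k<i, m} X_{km}$), together with the fact that any lower-height stratum not below $X_{ij}$ lies at positive distance from $X_{ij}$, one verifies that $X_{ij}^{\bm{w}} \subseteq X_{ij}^{w}$ for a suitable single-parameter $w=w(\bm{w})>0$. Since $L' \subseteq L$ implies $A_{L'} \supseteq A_{L}$, the weakly (moderately, strongly) seemly witnesses for $A_{X_{ij}^{w}}(\e)$ also witness the same kind of seemliness for $A_{X_{ij}^{\bm{w}}}(\e)$: non-emptiness of sections, the triangle structure, and the Hausdorff convergence only improve when passing to the larger intersection.

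Next I would aggregate stratum-wise witnesses across the finite index set: set $\e_{K^{\bm{w}}} = \min_{i,j} \e_{X_{ij}^{\bm{w}}}$, $\alpha^l(\e) = \max_{i,j} \alpha^l_{X_{ij}^{\bm{w}}}(\e)$, $\alpha^u(\e) = \min_{i,j} \alpha^u_{X_{ij}^{\bm{w}}}(\e)$, and, in the moderate/strong case, $\rho^l = \max_{i,j} \rho^l_{X_{ij}^{\bm{w}}}$, $\rho^u = \min_{i,j} \rho^u_{X_{ij}^{\bm{w}}}$. Finite $\max$/$\min$ preserves monotonicity and the limit conditions $\alpha^l(\e)\searrow 0$, $\alpha^u(\e)\nearrow \alpha^u(0)>0$, as well as the convergences $\rho^l\searrow \rr_{K^{\bm{w}}}(0,\alpha)$ and $\rho^u\nearrow \RR_{K^{\bm{w}}}(0,\alpha)$. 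Preservation of the diagonal condition $(\rho,\rho,\alpha)\in A$ under intersection handles moderate seemliness, and the strong-case requirements $\alpha^l(\e)=\e$, $\alpha^u(\e)=\e_{X_{ij}^{\bm{w}}}$ combine trivially.

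The main obstacle is verifying the two analytic conditions of weak seemliness for the intersected collection. For non-emptiness of $A_{K^{\bm{w}}}(\e\,|\,\alpha=\alpha')$ over $\alpha'\in[\alpha^l(\e),\alpha^u(\e)]$, I would use that each stratum-wise $\alpha$-section contains a right-isosceles triangle anchored on the diagonal $R=r$; the intersection of finitely many such diagonally-anchored triangles is again a diagonally-anchored triangle, non-empty provided $\min_{i,j} \RR_{X_{ij}^{\bm{w}}}(\e,\alpha') > \max_{i,j}\bigl(\rr_{X_{ij}^{\bm{w}}}(\e,\alpha') + \dd_{X_{ij}^{\bm{w}}}(\e,\alpha')\bigr)$. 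By Lemma \ref{lem:E_prop}(2), the right-hand terms vanish as $\e,\alpha'\to 0$, while the $\RR$'s remain bounded below by a positive constant (by Lemma \ref{lem:A_0_X} at $\e=0$, extended to small $\e>0$ via the stratum-wise Hausdorff convergence), so the required inequality holds after possibly shrinking $\e_{K^{\bm{w}}}$ and $\alpha^u$. The Hausdorff condition itself reduces to a one-sided distance by Lemma \ref{lem:A}(1); since the triangle-like regions are determined by the three parameters $(\RR,\rr,\dd)$, uniform convergence of these parameters for each stratum transfers to uniform convergence of their combined $\max$/$\min$, and hence to the required uniform Hausdorff convergence of the intersected sections.
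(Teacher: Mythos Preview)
Your proposal is correct and follows essentially the same strategy as the paper: exploit finiteness of $\st$, write $A_{K^{\bm{w}}}(\e)$ as a finite intersection of stratum-wise sets, and combine the witnessing data via $\min/\max$. Two minor differences are worth noting. First, you explicitly justify why the hypothesis (stated for $A_{X^{w}}$) transfers to $A_{X_{ij}^{\bm{w}}}$ via the containment $X_{ij}^{\bm{w}}\subseteq X_{ij}^{w}$; the paper takes this for granted. Second, for the Hausdorff convergence of $\alpha$-sections the paper invokes a direct intersection bound---from the structure in Lemma~\ref{lem:A} it asserts $d_H(A\cap A', B\cap B')\leq\max\{d_H(A,B),d_H(A',B')\}$---whereas you argue through uniform convergence of the parameters $(\RR,\rr,\dd)$; both routes work, with the paper's being shorter and yours being more transparent about what the geometry is doing.
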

The proof of the lemma is given in the Appendix. It follows that for a
sufficiently small $\e>0$ the set
$$
\Omega(\e,f,g) = \{\bm{w}\in\R^n_+\,|\, \e\in E_{K^{\bm{w}}}(f,g)\}
$$
is non-empty (where $f,g$ are continuous functions, as before). Moreover,
$$
\inf\{\|\bm{w}\|\,|\,\bm{w}\in\Omega(\e,f,g)\}\to 0,\text{ as } \e\to
0.
$$
We can now re-phrase the statement of Theorem
\ref{thm:lh_recover} (still keeping in mind all the assumptions and
notation stated at the beginning of the section).
\begin{theorem}\label{thm:lh_recover_K}
    Suppose $K$ is weakly homologically seemly, and
    $\e$ is small enough so that
    $\Omega(\e,f_c,g_c)\neq\emptyset$. Let
    $\bm{w}\in\Omega(\e,f_c,g_c)$, and assume that $c(\e'+\e)<\conv(P)$, where
    $\e'=\alpha_{K^{\bm{w}}}^{l}(g_c(0,\e))$. Denote
    \begin{equation*}
        \beta = g_c(\e',\e),\quad \gamma=f_c(0,\e)+f_c(\e',\e).
    \end{equation*}
    Take any
    $$
    (R',r') \in \Delta_{K^{\bm{w}}}(\e', \beta, \dd_{K^{\bm{w}}}(\e',\beta)+\gamma),
    $$
    and let
    $$
        R=R'-g_1(0,\e),\quad r=r'+g_c(\e',\e).
    $$
    Assuming that $x\in K^{\bm{w}}$, we have
    \begin{align*}
        \im\big( C(\e,R)\to C(\e'+\e, r) \big)&\approx\h(K,K-\{x\}),\quad c=1\\
        \im\big( V(\e,R)\to V(\e'+\e, r) \big)&\approx\h(K,K-\{x\}),\quad c=s
    \end{align*}
\end{theorem}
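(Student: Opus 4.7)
The plan is to reduce Theorem \ref{thm:lh_recover_K} to the proof of Theorem \ref{thm:lh_recover} by exploiting that $A_{K^{\bm{w}}}(\e) = \bigcap_{x \in K^{\bm{w}}} A_x(\e)$. The assumption $\bm{w}\in\Omega(\e,f_c,g_c)$ means $\e \in E_{K^{\bm{w}}}(f_c,g_c)$, so by Lemma \ref{lem:seemly_K} and part (4) of Lemma \ref{lem:A}, all the structural properties of $A_x(\e)$ (monotonicity of line sections, the triangular structure of $\alpha$-sections, existence of a non-shrinking $\alpha$-slab) apply verbatim to $A_{K^{\bm{w}}}(\e)$. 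The remark after Lemma \ref{lem:E_prop} then ensures that $\Delta_{K^{\bm{w}}}(\e', \beta, \dd_{K^{\bm{w}}}(\e',\beta)+\gamma)$ is nonempty, so the choice of $(R',r')$ in the statement is meaningful.

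First, I would fix an arbitrary $x \in K^{\bm{w}}$ and observe that since $A_{K^{\bm{w}}}(\tilde{\e}) \subseteq A_x(\tilde{\e})$ for every $\tilde{\e}\in[0,\e']$, the three key triples identified in the proof of Theorem \ref{thm:lh_recover} lie in the corresponding $\alpha$-sections for this particular $x$. Explicitly, $(R',R'-f_c(0,\e)) \in A_x(0\,|\,\alpha=g_c(0,\e))$, $(R'-f_c(0,\e), R'-f_c(0,\e)-\dd_{K^{\bm{w}}}(\e',\beta)) \in A_x(g_c(0,\e)\,|\,\alpha=\e')$, and $(R'-f_c(0,\e)-\dd_{K^{\bm{w}}}(\e',\beta), r') \in A_x(\e'\,|\,\alpha=\beta)$. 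This is exactly the setup needed to build the chain
\begin{equation*}
F(0,R')\xrightarrow{\vphi_0} F(g_c(0,\e),R'-f_c(0,\e))\xrightarrow{\vphi_1} F(\e',R'-f_c(0,\e)-\dd_{K^{\bm{w}}}(\e',\beta))\xrightarrow{\vphi_2} F(\beta,r'),
\end{equation*}
in which, by diagram \eqref{eq:def_A} applied at each step, the restricted maps $\im\vphi_{i-1}\to\im\vphi_i$ are isomorphisms and $\im\vphi_i\approx \h(K,K-\{x\})$.

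Next, I would apply Lemma \ref{lem:interleave} (with $\delta=0$ on the left, $\delta=\e'$ on the right, using $c(\e'+\e)<\conv(P)$ to validate the \v{C}ech--$D$ homotopy equivalence) to factor the composite $\vphi_2\circ\vphi_1\circ\vphi_0$ through $L_c(\e,R)$ on the left and $L_c(\e'+\e,r)$ on the right, where $L_1=C$ and $L_s=V$ and the constants $R=R'-g_1(0,\e)$, $r=r'+g_c(\e',\e)$ are chosen precisely so the inequalities $R<R'-f_1(0,\e)$ and $r>r'+f_c(\e',\e)$ of the interleaving lemma hold (the choice $c\in\{1,s\}$ selecting the \v{C}ech or Vietoris--Rips version). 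Finally, Lemma \ref{lem:homob} applied to the resulting six-term sequence $F(0,R')\to L_c(\e,R)\to F(g_c(0,\e),R'-f_c(0,\e))\to F(\e',R'-f_c(0,\e)-\dd_{K^{\bm{w}}}(\e',\beta))\to L_c(\e'+\e,r)\to F(\beta,r')$ identifies $\im(L_c(\e,R)\to L_c(\e'+\e,r))$ with $\im\vphi_1 \approx \h(K,K-\{x\})$.

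The only substantive obstacle is purely notational bookkeeping: ensuring that the single $\dd_{K^{\bm{w}}}(\e',\beta)$ (rather than a possibly smaller point-wise $\dd_x(\e',\beta)$) still places the three required points inside the admissible triangles of $A_x$ at each of the three $\alpha$-levels simultaneously. This follows immediately from part (1) of Lemma \ref{lem:E_prop} applied to $A_{K^{\bm{w}}}$, combined with the inclusion $A_{K^{\bm{w}}}(\tilde{\e}\,|\,\alpha\leq\beta)\subseteq A_x(\tilde{\e}\,|\,\alpha\leq\beta)$, so no additional work is needed and the rest of the argument is literally that of Theorem \ref{thm:lh_recover}.
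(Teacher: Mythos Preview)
Your proposal is correct and follows exactly the approach the paper intends: the paper does not give a separate proof of Theorem \ref{thm:lh_recover_K} at all, simply calling it a ``re-phrasing'' of Theorem \ref{thm:lh_recover}, and your reduction via $A_{K^{\bm{w}}}(\tilde\e)\subseteq A_x(\tilde\e)$ is precisely how that re-phrasing is meant to be justified. One small slip: your parenthetical ``inequalities $R<R'-f_1(0,\e)$ and $r>r'+f_c(\e',\e)$'' are misstated (in fact $R=R'-g_1(0,\e)>R'-f_1(0,\e)$), but this is only a descriptive error in recalling the constraints of Lemma \ref{lem:interleave}; the factorization itself, and the rest of the argument, go through exactly as in the paper's diagram \eqref{eq:lh_diag}.
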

Thus, for a sufficiently small $\e>0$, we can guarantee local homology
recovery at all points of $P$ except those lying outside of
$D_{\e}(K^{\bm{w}})$, where $\bm{w}\in\R^n_+$ tends to $0$ as
$\e\to 0$. Corollary \ref{cor:intro} is a direct consequence of this
theorem and Theorem \ref{thm:weak_seemly} from Section \ref{sec:seemly_strat}.

As in the case of a singleton, the result can be made more specific if
we assume moderate or strong seemliness of $K$. We leave the details as an
exercise for the reader. The question that we
still need to answer is whether, under the assumptions discussed in
Section \ref{sec:prelim}, $K$ is weakly, moderately, or strongly
locally homologically seemly.

\section{Homological seemliness of certain classes of stratified sets}
\label{sec:seemly_strat}
In this section, we show that all of the stratified sets discussed in
Section \ref{sec:prelim} are indeed locally homologically seemly. We start with the most
general case.

\begin{theorem}\label{thm:weak_seemly}
    Let $K\subseteq\X$ be a compact homology stratified neighborhood
    retract. Then $K$ is weakly locally homologically seemly.
\end{theorem}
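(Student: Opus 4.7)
The plan is to fix a stratum $X\in\st$ and a sufficiently small $w>0$, and verify the four conditions in the definition of weak seemliness for the collection $A_{X^{w}}(\e)$. The key inputs are Lemma \ref{lem:A_0_X}, the axioms of tame homology stratification (in particular, stability of local homology along strata and finite generation of the local homology groups), compactness of $X^{w}\subset K$, and the fact that $K$ is a neighborhood retract.

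First I would handle the baseline $\e=0$. Lemma \ref{lem:A_0_X} already asserts that for small enough $\alpha'>0$ the $\alpha$-section $A_{X^{w}}(0\,|\,\alpha=\alpha')$ is a non-empty right isosceles triangle. Combined with the second tame-stratification axiom, which keeps the relevant inclusion-induced isomorphisms valid as the base point varies within a fixed $B_r(x)\cap X$, a standard covering argument (using the compactness of $X^{w}$) produces uniform constants $\alpha_{X^{w}}^{u}(0)>0$ and $R^{u}>r^{u}>0$ witnessing the triangular structure for every $x\in X^{w}$ simultaneously. This fixes the target geometry that the $\e>0$ case must approximate.

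Next I would extend to $\e>0$ via the neighborhood retract hypothesis. Given $\pi:U\to K$, compactness of $K$ together with continuity of $\pi$ yield, for each sufficiently small $\e>0$, a retraction $\pi_{\e}:D_{\e}(K)\to K$ and a homotopy $H_{\e}$ from the inclusion $D_{\e}(K)\hookrightarrow D_{\e+\eta(\e)}(K)$ to $\pi_{\e}$ composed with inclusion, with $\sup_{p,t}d(p,H_{\e}(p,t))\leq\eta(\e)$ and $\eta(\e)\to 0$ as $\e\to 0$. Applying this small thickening to the long exact sequences of the relevant pairs and invoking the five-lemma gives comparisons
\[
    F(\e,\rho)\ \longleftrightarrow\ F(0,\rho\pm\eta(\e)),\qquad
    F(\alpha,\rho)\ \longleftrightarrow\ F(\alpha\pm\eta(\e),\rho\pm\eta(\e)),
\]
compatible with the four-term relative-homology diagram defining $A_{x}(\e)$ up to perturbations of order $\eta(\e)$.

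The combinatorial heart of the argument is then to translate these comparisons into a containment $A_{x}(0\,|\,\alpha=\alpha')\subseteq A_{x}(\e\,|\,\alpha=\alpha'+O(\eta(\e)))$ and its reverse, after shrinking $R$ and enlarging $r$ by amounts of order $\eta(\e)$; this bounds the Hausdorff distance between $A_{X^{w}}(0\,|\,\alpha=\alpha')$ and $A_{X^{w}}(\e\,|\,\alpha=\alpha')$ by a multiple of $\eta(\e)$. Setting $\alpha_{X^{w}}^{l}(\e)=C\eta(\e)$ and $\alpha_{X^{w}}^{u}(\e)=\alpha_{X^{w}}^{u}(0)-C\eta(\e)$ for a suitable $C$ then verifies conditions (1)--(4) of weak seemliness. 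The main obstacle will be the technical handling of the ``relative piece'' $-B_{\rho}(p)$ under $\pi_{\e}$: since $\pi_{\e}$ need not send $D_{\e}(K)-B_{\rho}(p)$ strictly into $K-B_{\rho-\eta(\e)}(x)$, one must use $H_{\e}$ to close the required commutative squares and employ excision to discard spurious contributions. Uniformity over $x\in X^{w}$ is then automatic, since $\eta(\e)$ depends only on $\e$ and $K$, and the baseline constants supplied by the first step are uniform by compactness of $X^{w}$.
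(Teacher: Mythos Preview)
Your overall strategy---use the retraction to build a small-displacement homotopy $H_\e$ on $D_\e(K)$ and transfer information from scale $0$ to scale $\e$---is exactly the paper's, and the ingredients you list are the right ones. But there is a genuine gap at the step where you invoke the five-lemma.

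The claimed comparison $F(\alpha,\rho)\leftrightarrow F(\alpha\pm\eta(\e),\rho\pm\eta(\e))$ does not follow from $H_\e$, which lives only on $D_\e(K)$ and says nothing about $D_\alpha(K)$; and even the legitimate comparison $F(\e,\rho)\to F(0,\rho-\eta(\e))$ is not by itself enough to verify the condition in diagram~\eqref{eq:def_A} that $\jj\circ\ii:F(0,R)\to\im\jj$ is an \emph{isomorphism onto} $\im\jj$. Knowing $\jj\circ\ii$ is injective with image isomorphic to $F(0,0)$ does not control $\im\jj$ itself, and that is precisely what the diagonal arrow in \eqref{eq:def_A} demands. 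The mechanism you are missing, and which the paper uses, is that $H_\e$ yields a \emph{factoring} of $\jj$: since $H_\e$ is a homotopy from the inclusion to $\pi_\e$ with displacement $\le\eta(\e)$, the inclusion-induced map $F(\e,R)\to F(\alpha,r)$ equals the composite
\[
F(\e,R)\xrightarrow{(\pi_\e)_*}F(0,\rho)\longrightarrow F(\alpha,r)
\]
whenever $\alpha\ge\e+\eta(\e)$ and $r\le\rho\le R-\eta(\e)$. This immediately gives $\im\jj\subseteq\im\big(F(0,\rho)\to F(\alpha,r)\big)$; and since $(\pi_\e)_*\circ\ii$ is the inclusion-induced $F(0,R)\to F(0,\rho)$, which is an isomorphism when $R,\rho$ lie in the good range from Lemma~\ref{lem:A_0_X}, both the injectivity of $\ii$ and the equality $\im(\jj\circ\ii)=\im\jj$ follow directly. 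No long exact sequences or five-lemma are involved. (Incidentally, your ``main obstacle'' is not one: the displacement bound gives $\pi_\e(D_\e(K)-B_\rho(x))\subseteq K-B_{\rho-\eta(\e)}(x)$ immediately---and the ball should be centered at $x\in K$, not at a sample point $p$.) Once this factoring is in place, your Hausdorff-distance argument and the choice $\alpha^l_{X^w}(\e)=\e+\eta(\e)$ (note $\eta(\e)\ge\e$ always, so $C\eta(\e)$ with $C\ge2$ also works) finish the proof along the lines the paper follows.
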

\begin{proof}
    The proof adapts some of the standard techniques used when dealing
    with Euclidean neighborhood retracts \citep[see e.g.][]{bredon2013,dold1995}.
    The underlying idea is that a sufficiently small neighborhood of $K$ strongly
    deforms onto $K$ through a slightly larger neighborhood, and such a
    deformation keeps exteriors of small, but not too small balls
    inside exteriors of slightly smaller balls. These facts then allow
    us to choose appropriate global and local scales.

    Since $K$ is a neighborhood retract
    and $\X$ is locally strongly convex, for any
    sufficiently small $\alpha>0$ there is $\e>0$ such that the
    neighborhood $D_{\e}(K)$ strongly deforms to $K$ through
    $B_{\alpha}(K)$.  Indeed, let $U$ be a neighborhood of $K$ such that
    there is a retraction $\pi:U\to K$. Due to compactness of $K$, we have
    $B_{\alpha'}(K)\subseteq U$ for a sufficiently small $\alpha'$.
    Moreover, due to continuity of $\pi$, $\alpha'$ can be chosen so small
    that there is a unique shortest path between any $x\in
    B_{\alpha'}(K)$ and $\pi(x)$. That is, we have
    $$
    \gamma_{x}:[0,1]\to\X,\quad \gamma_{x}(0)=x,\quad
    \gamma_{x}(1) = \pi(x).$$
    Consider a deformation
    $$
        G:B_{\alpha'}(K)\times[0,1]\to\X,\quad
        G(x,t) = \gamma_x(t).
    $$
    For any $\alpha\leq\alpha'$, preimage
    $G^{-1}(B_{\alpha}(K))$ is an open set containing $K\times[0,1]$.
    Since $[0,1]$ is compact, $G^{-1}(B_{\alpha}(K))$ contains an open
    set of the form $V\times[0,1]$, where $V\supseteq K$.
    Therefore it also contains
    $D_{\e}(K)\times[0,1]$ for $\e>0$ sufficiently small.

    Take $\rho_u>0$, $\beta>0$, and $\alpha>0$. For any $x\in K$ and
    any $\rho'\in[0,\rho_u]$, the preimage
    $$
        U_{\rho'}(x)=G^{-1}(B_{\alpha}(K)-D_{\rho'}(x))
    $$
    is an open set containing
    $$
        (K-B_{\rho'+\beta}(x))\times[0,1],
    $$
    and hence containing
    $$
        (D_{\e}(K)-B_{\rho'+\beta}(x))\times[0,1]
    $$
    for some $\e=\e(x,\rho',\beta)>0$. An argument analogous to that of Lemma
    \ref{lem:A_0_X} shows that due to compactness of $K$ and
    $[0,\rho_u]$ we can find  $\bar{\e}(\beta)>0$ such that for
    $\e\in(0,\bar{\e}(\beta))$ we have
    $$
        U_{\rho'}(x)\supseteq (D_{\e}(K)-B_{\rho''}(x))\times[0,1]
    $$
    for all $x\in K$, $\rho'\in [0,\rho_u]$, $\rho''\in[\rho'+\beta,\rho_u+\beta]$.
    
    In addition to our typical inclusions, we have the following maps
    $$
        D_{\e'}(K)\times\{0\} \to D_{\alpha}(K),\quad \e'\in[0,\e],
    $$
    and
    $$
        K \to D_{\e}(K)\times\{\ell\},\quad \ell=0,1,
    $$
    defined by $(y,0)\mapsto y$, and $y\mapsto (y,\ell)$,
    respectively.
    We also have a retraction map
    $$
        \pi:D_{\e}(K)\times\{1\} \to K,
    $$
    defined by $\pi(y,1)=G(y,1)$. The maps $G$ and $\pi$ are continuous, and
    the sets $D_{a}(K)$, $D_{a}(K) - B_{b}(x)$, with $b>a\geq0$, are
    compact. Therefore, for any $\rho'\in[0,\rho_u)$ we can find
    $\rho\in(\rho', \min\{\rho'+\beta, \rho_u\})$
    close enough to $\rho'$ so that
    \begin{equation*}
        \begin{aligned}
            &G((D_{\e}(K))\times[0,1]) \subseteq D_{\alpha}(K),\\
            &G((D_{\e}(K)-B_{\rho+\beta}(x))\times[0,1]) \subseteq
                D_{\alpha}(K)-B_{\rho}(x),\\
            &\pi((D_{\e}(K)-B_{\rho+\beta}(x))\times\{1\}) \subseteq
                K-B_{\rho}(x).
        \end{aligned}
    \end{equation*}
    Taking $\rho''$ $\in$ $[\rho+\beta,\rho_u+\beta]$,
    recalling the meaning of $F(a,b)$ from \eqref{nom:hom},
    and denoting
    \begin{equation*}
        F(a,b,I) = \h(D_{a}(K) \times I, (D_{a}(K)-B_{b}(x)) \times I),
    \end{equation*}
    where $a,b\in\R_+$, $I\subseteq[0,1]$,
    we obtain the following commutative diagram:
    \begin{equation}\label{eq:dgm_proof_ws}
    \begin{tikzcd}[row sep=small]
        F(0,\rho'')\ar[r]\ar[dd, equal]&
        F(\e,\rho'',\{1\})\ar[r, "\pi_*"]\ar[d, "\approx"] &
        F(0,\rho)\ar[d]\\
        &F(\e,\rho'',[0,1])\ar[r, "G_*"]&
        F(\alpha,\rho)\\
        F(0,\rho'')\ar[r]&
        F(\e,\rho'',\{0\})\ar[u, "\approx"]\ar[ur] &
    \end{tikzcd}
    \end{equation}
    Let $X\in\st$ be a stratum of $K$, and suppose $w>0$ is sufficiently
    small, so that $A_{X^{w}}(0)$ is non-empty.
    Let $\alpha_0>0$ be such that $A_{X^{w}}(0\,|\,$ $\alpha=\alpha_0)$ has
    a non-empty interior. For each $\alpha\in[0,\alpha_0]$ we have
    functions
    \begin{equation*}
        \begin{aligned}
            \RR_{X^{w}}(\alpha) &= \sup\{\rho\,|\,
                (\rho,\rho,\alpha) \in A_{X^{w}}(0)\},\\
            \rr_{X^{w}}(\alpha) &= \inf\{\rho\,|\, (\rho,\rho,\alpha)
                \in A_{X^{w}}(0)\}
        \end{aligned}
    \end{equation*}
    which are
    non-increasing and non-decreasing, respectively.
    Diagram \eqref{eq:dgm_proof_ws}, combined with diagrams
    \eqref{eq:def_A} and \eqref{eq:dgm_proof_A_0}, shows that
    if $(\rho''$, $\rho$, $\alpha)$ $\in$
    $A_{X^{w}}(0)$ for some $\rho''$ $\in$ $[\rho+\beta,\rho_u+\beta]$, then
    $(\rho'',\rho,\alpha)$ $\in$ $A_{X^{w}}(\e)$ for all
    $\rho''$ $\in$ $[\rho+\beta,\RR_{X^{w}}(\alpha))$.

    Let $\beta: [0,\alpha_0]\to\R_+$ be such that
    \begin{equation*}
        \beta(\alpha)\searrow 0, \text{ as } \alpha \to 0, \text{ and }
        \beta(\alpha_0) < \RR_{X^{w}}(\alpha_0) - \rr_{X^{w}}(\alpha_0).
    \end{equation*}
    According to the earlier discussion, for
    each $\alpha$ $\in$ $[0,\alpha_0]$ we can find
    $\e'(\alpha)$ $\in$ $(0,\bar{\e}(\alpha))$ such that
    taking
    \begin{equation*}
        \rho \in (\rr_{X^{w}}(\alpha),
        \RR_{X^{w}}(\alpha)-\beta(\alpha))\quad
        \text{ and }\quad
        \rho' \in [\rho+\beta(\alpha), \RR_{X^{w}}(\alpha))
    \end{equation*}
    we have
    \begin{equation*}
        (\rho', \rho, \alpha) \in A_{X^{w}}(\e'(\alpha)).
    \end{equation*}
    Note that we can choose $\e'(\alpha)$ to be non-decreasing with
    respect to $\alpha$.
    Indeed, $\RR_{X^{w}}(\alpha)$ is non-increasing and
    $\rr_{X^{w}}(\alpha)$ is non-decreasing, so having
    \begin{equation*}
        (\rho', \rho, \alpha_1) \in A_{X^{w}}(\e'(\alpha_1))
    \end{equation*}
    for
    \begin{equation*}
            \rho \in (\rr_{X^{w}}(\alpha_1),
            \RR_{X^{w}}(\alpha_1)-\beta(\alpha_1))\quad
            \text{ and }\quad
            \rho' \in [\rho+\beta(\alpha_1), \RR_{X^{w}}(\alpha_1))
    \end{equation*}
    implies 
    \begin{equation*}
        (\rho', \rho, \alpha_2) \in A_{X^{w}}(\e'(\alpha_1))
    \end{equation*}
    for
    \begin{equation*}
            \rho \in (\rr_{X^{w}}(\alpha_2),
            \RR_{X^{w}}(\alpha_2)-\beta(\alpha_2))\quad
            \text{ and }\quad
            \rho' \in [\rho+\beta(\alpha_2), \RR_{X^{w}}(\alpha_2)),
    \end{equation*}
    as long as $\alpha_1\leq\alpha_2$.
    If we define
    \begin{equation*}
        \e(\alpha) = \e'(\alpha) + \exp{(-1/\alpha)} \inf_{\alpha' \in [\alpha,\alpha_0]}
        (\bar{\e}(\alpha') - \e'(\alpha')),
    \end{equation*}
    we still have $\e(\alpha)$ $\in$ $(0,\bar{\e}(\alpha))$, and $\e(\alpha)$
    is strictly increasing with respect to $\alpha$.

    Let $\e_0=\e(\alpha_0)$. Define the function
    $\alpha_{X^{w}}^{u}(\cdot)$ on $[0,\e_0]$ to be the constant $\e_0$.
    Since $\e(\alpha)$ is strictly monotonic, it has
    the inverse (which is constant on intervals corresponding to
    discontinuities of $\e(\alpha)$). Hence, we define
    $\alpha_{X^{w}}^{l} = \e^{-1}$. By construction,
    \begin{equation*}
        d_H(A_{X^{w}}(0\,|\, \alpha=\alpha'), A_{X^{w}}(\e\,|\,
        \alpha=\alpha'))\leq \beta(\alpha^{l}_{X^{w}}(\e)),
    \end{equation*}
    for all $\alpha'\in[\alpha_{X^{w}}^{l}(\e), \e_0]$, thus yielding the
    needed result.

\end{proof}

Thus, Theorem \ref{thm:lh_recover_K} applies to $K$, and we see that
even for very general stratifies sets, local homology can be recovered
at a all points of an $\e$-sample, but for a small fraction, as long as
$\e$ is sufficiently small. One can expect that things only improve as
the stratified set becomes ``nicer''.

\begin{theorem}\label{thm:mod_seemly}
    Suppose that $\X$ is a complete Riemannian manifold, $K\subseteq\X$
    is a compact Whitney stratified set. Then $K$ is moderately locally
    homologically seemly.
\end{theorem}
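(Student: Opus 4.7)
The plan is to upgrade the weak seemliness proof of Theorem \ref{thm:weak_seemly} by eliminating the buffer $\beta$ between the inner and outer radii that appeared there. Concretely, the new content required by moderate seemliness is the presence of diagonal triples $(\rho,\rho,\alpha)$ in $A_{X^{w}}(\e)$ over a controlled range of $\rho$; once such triples are produced, the remaining conditions on $\rho^{l}_{x}$ and $\rho^{u}_{x}$ follow by direct bookkeeping from the quantities $\rr_{X^{w}}(\alpha), \RR_{X^{w}}(\alpha)$ already associated to $A_{X^{w}}(0)$ via Lemma \ref{lem:A_0_X}.

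The main step is to replace the shortest-path deformation $G$ used in the weak case by a nearly stratum preserving strong deformation retraction of a neighborhood $U\supseteq K$ onto $K$. Viewing $\X$ itself as a Whitney stratified set with the extra stratum $\X-K$, the existence of such a retraction $F:U\times[0,1]\to U$ is guaranteed by the Pflaum--Thom--Mather results recalled in Section \ref{sec:prelim}. Fix a stratum $X$, a small $w>0$, a point $x\in X^{w}$, a scale $\alpha$, and a radius $\rho\in(\rr_{X^{w}}(\alpha),\RR_{X^{w}}(\alpha))$ for which $S_\rho(x)\pitchfork Y$ for every $Y\in\st$; such $\rho$ are dense in the interval by Sard's theorem applied to $y\mapsto d(x,y)^2$ on each stratum. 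The key technical construction is to modify $F$ so that it additionally preserves the exterior of $B_\rho(x)$. I would realize $F$ as the flow of a Thom--Mather controlled stratified vector field on $U$, and then, stratum by stratum, project this vector field onto the sub-bundle of vectors tangent to $S_\rho(x)\cap Y$ inside $Y$, using a partition of unity supported in a collar of $S_\rho(x)$. The flow $\tilde F$ of the projected field is still nearly stratum preserving and still retracts $D_\e(K)$ onto $K$ through $D_\alpha(K)$ for small enough $\e$, but now satisfies
\begin{equation*}
    \tilde F\big((D_\e(K)-B_\rho(x))\times[0,1]\big)\subseteq D_\alpha(K)-B_\rho(x).
\end{equation*}
Plugging $\tilde F$ into diagram \eqref{eq:dgm_proof_ws} with $\rho''=\rho$ (rather than $\rho''\geq\rho+\beta$) yields $(\rho,\rho,\alpha)\in A_{X^{w}}(\e)$.

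To finish, I would set
\begin{equation*}
    \rho^{l}_{x}(\e,\alpha)=\rr_{X^{w}}(\alpha)+\eta(\e,\alpha),\quad
    \rho^{u}_{x}(\e,\alpha)=\RR_{X^{w}}(\alpha)-\eta(\e,\alpha),
\end{equation*}
with $\eta(\e,\alpha)\searrow 0$ as $\e\to 0$, chosen large enough to absorb the $\rho$'s missed by transversality and the approximation errors inherent to the controlled modification, and monotone to produce the required monotonicity in $\e$ and $\alpha$. The functions $\alpha^{l}_{X^{w}}$ and $\alpha^{u}_{X^{w}}$ are inherited directly from the weak seemliness proved in Theorem \ref{thm:weak_seemly}. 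I expect the sphere-preserving modification to be the main obstacle: guaranteeing continuity of the projected vector field across the interfaces between strata requires the full strength of Whitney (b)-regularity and a careful application of Mather's control data near the boundary strata of $X^{w}$; without (b)-regularity the stratumwise projections need not glue continuously, and the resulting flow could leave $D_\alpha(K)$.
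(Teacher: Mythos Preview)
Your overall strategy---produce, for each $x\in X^{w}$ and each admissible $\rho$, a deformation of $D_{\e}(K)$ onto $K$ through $D_{\alpha}(K)$ that keeps $S_{\rho}(x)$ invariant, and then run the diagram of Theorem~\ref{thm:weak_seemly} with $\rho''=\rho$---matches the paper exactly. Where you diverge is in how that sphere-preserving retraction is built. The paper does \emph{not} project a Thom--Mather vector field. It uses the transversality $S_{\rho}(x)\pitchfork Y$ (for all $\rho\in(0,\RR_{X^{w}}(0))$ and all strata $Y$) together with Thom's first isotopy lemma for the family $(y,\varrho)\mapsto S_{\varrho}(y)$ to obtain a local trivialization $J$ identifying each nearby sphere, as a stratified space, with a fixed model $(S,L)$ where $L\cong K\cap S_{\rho}(x)$. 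Since $L$ is itself Whitney stratified in $S$, it admits a neighborhood $U_{L}\subset S$ that strongly deformation retracts onto $L$; pulling this back through $J$ yields, fiber by fiber, a retraction of $D_{\e}(K)$ onto $K$ that lives entirely inside each sphere $S_{\varrho}(y)$. One then glues with the ambient retraction $G$ by a partition of unity. The sphere-invariance is thus obtained for free from a retraction constructed \emph{within} $S$, not by surgery on an ambient flow.

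Your projection construction has a gap that you do not identify. Projecting the controlled vector field $v$ onto $T(S_{\rho}(x))$ can annihilate it wherever the Thom--Mather flow direction happens to be normal to the sphere, producing fixed points of $\tilde F$ that lie outside $K$; one must argue that for small $\e$ such points are outside $D_{\e}(K)$ (this hinges on $S_{\rho}(x)$ meeting the fibers of the tube around each stratum transversally near $K\cap S_{\rho}(x)$), and then still show that the projected flow on $D_{\e}(K)\cap S_{\rho}(x)$ actually converges to $K\cap S_{\rho}(x)$ and that the partition-of-unity interpolant both reaches $K$ and remains in $D_{\alpha}(K)$. None of this is automatic, and the obstacle you do flag---continuity of the stratumwise projections across strata---is real but secondary. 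Two smaller points: the appeal to Sard's theorem is misplaced, since condition~(1) in the definition of $A_{x}(\e)$ already demands $S_{\rho'}(x)\pitchfork Y$ for \emph{every} $\rho'\in(0,R]$ (this is what the quantity $\RR_{X^{w}}(0)$ encodes), so a merely dense set of transversal radii would not suffice and is anyway not needed; and the error term $\eta(\e,\alpha)$ you introduce must be uniform in $x\in X^{w}$ and $\rho$, which your sketch does not arrange. The paper handles the uniformity by a compactness argument on $X^{w}\times[\rho_{l},\rho_{u}]$ after the trivialization is in place.
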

\begin{proof}
    Similarly to the proof of Theorem \ref{thm:weak_seemly}, the idea is to use a strong
    deformation of a neighborhood of $K$ onto $K$ through a slightly larger
    neighborhood. But now we can employ the fact that $K$ is Whitney
    stratified, which will allow us to modify such a deformation so that
    the exterior and the interior of a small, but not too small ball are
    invariant with respect to the deformation.

    Since $K$ is Whitney stratified, there is a neighborhood $U\supseteq
    K$ and a strong deformation retraction $G:U\times[0,1]\to U$ of $U$
    onto $K$.  Due to compactness of $K$, we can find $\alpha'>0$ such
    that $B_{\alpha'}(K)\subseteq U$.

    Let $X\in\st$ be a stratum of $K$, and suppose $w>0$ is sufficiently
    small, so that $A_{X^{w}}(0)$ is non-empty.  Let
    $\alpha_0$ $\in$ $(0,\alpha']$ be such that
    $A_{X^{w}}(0\,|\,$ $\alpha=\alpha_0)$ has a non-empty interior. For
    each $\alpha$ $\in$ $[0,\alpha_0]$ we have functions
    \begin{equation*}
        \begin{aligned}
            \RR_{X^{w}}(\alpha) &= \sup\{\rho\,|\, (\rho,\rho,\alpha)
        \in A_{X^{w}}(0)\},\\
            \rr_{X^{w}}(\alpha) &= \inf\{\rho\,|\, (\rho,\rho,\alpha)
        \in A_{X^{w}}(0)\},
        \end{aligned}
    \end{equation*}
    which are non-increasing and non-decreasing,
    respectively.

    For any $x\in X^{w}$ and $\rho\in(0,\RR_{X^{w}}(0))$, the spheres
    $S_{\rho}(x)$ are transverse to all the strata of $K$. Therefore,
    $S_{\rho}(x)\cap K$ is Whitney a stratified set, and we can regard
    $S_{\rho}(x)$ itself as a Whitney stratified set with the strata of
    $S_{\rho}(x)\cap K$ and $S_{\rho}(x)-K$. All these stratified sets
    have the same structure, that is, for any $x_1,x_2\in X^{w}$,
    $\rho_1$, $\rho_2$ $\in$ $(0,\RR_{X^{w}}(0))$, there is a stratified
    homeomorphism $S_{\rho_1}(x_1)$ and $S_{\rho_2}(x)$, taking
    $S_{\rho_1}(x)\cap K$ to $S_{\rho_2}(x)\cap K$. Let $S$ denote one
    such stratified set, with $L\subset S$ homeomorphic to
    $S_{\rho}(x)\cap K$.

    For a given $x\in X^{w}$ and $\rho\in(0,\RR_{X^{w}}(0))$, using the
    smooth family of distance functions $d(y,\cdot)^2$ along with
    Thom's first isotopy lemma, we obtain a continuous map
    $J(y,\vrho,z)$, defined on an open subset of
    \begin{equation*}
    (B_{\delta}(x)\cap X) \times (\rho-\beta,\rho+\beta) \times
    (B_{\rho+\beta+\delta}(x) - B_{\rho-\beta-\delta}(x)),
    \end{equation*}
    for some $\delta$, $\beta>0$,  and mapping it into
    \begin{equation*}
        (B_{\delta}(x)\cap X) \times (\rho-\beta,\rho+\beta) \times S,
    \end{equation*}
    with $J(y,\vrho,\cdot):$ $S_{\vrho}(y)$ $\to$ $S$ a stratified
    homeomorphism smooth on each stratum. Since $L\subset S$ is Whitney stratified, we can find
    an open (in $S$) neighborhood $U_L$ of $L$ that strongly deformation
    retracts onto $L$. Then
    \begin{equation*}
    J^{-1}((B_{\delta}(x)\cap X) \times (\rho-\beta,\rho+\beta) \times U_L)
    \end{equation*}
    is an open neighborhood of
    \begin{equation*}
    (B_{\delta}(x)\cap X) \times (\rho-\beta,\rho+\beta) \times
        (S_{\rho}(x)\cap K),
    \end{equation*}
    and we can find $\delta'>0$, $\beta'>0$, and $\e>0$ such that for any
    $y\in (B_{\delta'}(x)\cap X)$ this neighborhood contains
    \begin{equation*}
        \{y\} \times [\rho-\beta',\rho+\beta'] \times (D_{\e}(K)\cap
        (D_{\rho+\beta'}(y) - B_{\rho-\beta'}(y))).
    \end{equation*}

    By pulling back the deformation retraction of $U_L$ and using
    partition of unity to glue it with $G$, we see that
    for each $x\in X^{w}$ and 
    $\rho$ $\in$ $(\rr_{X^{w}}(\alpha)$, $\RR_{X^{w}}(\alpha))$,
    with $\alpha$ $\in$ $(0,\alpha']$, we can
    find $\e=\e(x,\rho)>0$ such that $D_{\e}(K)$ strongly deforms onto $K$
    through $B_{\alpha}(K)$, and $S_{\rho}(x)$ is invariant under the
    deformation. The latter implies
    \begin{equation*}
        G_{x,\rho}((D_{\e}(K)-B_{\rho}(x)) \times[0,1]) \subseteq
        D_{\alpha}(K)-B_{\rho}(x),
    \end{equation*}
    where $G_{x,\rho}$ denotes the
    deformation. Moreover, such $\e(x,\rho)$ is bounded away from
    zero on a neighborhood of $(x,\rho)$, and hence on the compact
    $X^{w}$ $\times$ $[\rho_l,\rho_u]$, where
    $\rr_{X^{w}}(\alpha)<$ $\rho_l<$ $\rho_u<$ $\RR_{X^{w}}(\alpha)$.
    
    As in the proof of Theorem \ref{thm:weak_seemly},
    we have maps
    \begin{equation*}
        D_{\e'}(K)\times\{0\} \to D_{\alpha}(K),\quad \e'\in[0,\e],
    \end{equation*}
    and
    \begin{equation*}
        K \to D_{\e}(K)\times\{\ell\},\quad \ell=0,1,
    \end{equation*}
    as well as the retraction map:
    \begin{equation*}
        \pi_{x,\rho}:D_{\e}(K)\times\{1\} \to K,
    \end{equation*}
    defined by
    $\pi_{x,\rho}(y,1)$ $=$ $G_{x,\rho}(y,1)$. Hence, once again
    recalling the meaning of notation $F(a,b)$ from \eqref{nom:hom} and denoting
    \begin{equation*}
        F(a,b,I) = \h(D_{a}(K) \times I, (D_{a}(K)-B_{b}(x)) \times I),
    \end{equation*}
    where $a,b\in\R_+$, $I\subseteq[0,1]$,
    we obtain the following commutative diagram:
    \begin{equation}\label{eq:dgm_proof_ms}
    \begin{tikzcd}[row sep=small]
        F(0,\rho)\ar[r]\ar[dd, equal]&
        F(\e,\rho,\{1\})\ar[r, "(\pi_{x,\rho})_*"]\ar[d, "\approx"] &
        F(0,\rho)\ar[d]\\
        &F(\e,\rho,[0,1])\ar[r, "(G_{x,\rho})_*"]&
        F(\alpha,\rho)\\
        F(0,\rho)\ar[r]&
        F(\e,\rho,\{0\})\ar[u, "\approx"]\ar[ur] &
    \end{tikzcd}.
    \end{equation}
    This diagram shows that if $(\rho$, $\rho$, $\alpha)$ $\in$ $A_{X^{w}}(0)$ then
    $(\rho$, $\rho$, $\alpha)$ $\in$ $A_{X^{w}}(\e)$, for $\rho$ $\in$ $[\rho_l,\rho_u]$.
    
    Let $\beta: [0,\alpha_0]\to\R_+$ be such that
    \begin{equation*}
        \beta(\alpha)\searrow 0, \text{ as } \alpha\to 0, \text{ and }
        \beta(\alpha_0)<\RR_{X^{w}}(\alpha_0) - \rr_{X^{w}}(\alpha_0).
    \end{equation*}
    Based on the above discussion and proceeding as in Theorem
    \ref{thm:weak_seemly}, we obtain a strictly increasing function
    $\e:[0, \alpha_0]\to \R_+$, such that for any
    \begin{equation*}
        \alpha\in[0,\alpha_0] \text{ and }
        \rho \in [\rr_{X^{w}}(\alpha)+\beta(\alpha), \RR_{X^{w}}(\alpha)-\beta(\alpha)]
    \end{equation*}
    we have
    \begin{equation*}
        (\rho, \rho, \e(\alpha)) \in A_{X^{w}}(\e(\alpha)).
    \end{equation*}
    As in Theorem \ref{thm:weak_seemly}, we let
    $\e_0=\e(\alpha_0)$, define $\alpha_{X^{w}}^{u}(\cdot)$ on
    $[0,\e_0]$ to be the constant $\e_0$, and let
    $\alpha_{X^{w}}^{l}$ $=$ $\e^{-1}$.
    The claim of the theorem now readily follows by letting
   \begin{equation*}
        \rho^{l}_{X^{w}}(\alpha) = \rr_{X^{w}}(\alpha)+\beta(\alpha),
        \quad
        \rho^{u}_{X^{w}}(\alpha) = \RR_{X^{w}}(\alpha)-\beta(\alpha).
    \end{equation*}
\end{proof}

\begin{theorem}\label{thm:strong_seemly}
    Suppose that $\X$ is a complete Riemannian manifold, $K\subseteq\X$ is a
    compact Whitney stratified set with positive weak feature size.
    Then $K$ is strongly locally homologically seemly.
\end{theorem}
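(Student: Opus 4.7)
The plan is to refine the proof of Theorem \ref{thm:mod_seemly} so as to eliminate the ``gap'' between $\e$ and the smallest admissible extra global scale; that is, to take $\alpha_{X^{w}}^{l}(\e)=\e$ and $\alpha_{X^{w}}^{u}(\e)\equiv \e_x$ on $[0,\e_x]$. In the moderately seemly argument, the strict inequality $\alpha>\e$ was forced because the Whitney-stratified strong deformation retraction of $D_{\e}(K)$ onto $K$ could only be guaranteed to stay inside a slightly larger neighborhood $B_{\alpha}(K)$. Positivity of $\wfs(K)$ is tailor-made to fix this: for any $\e\in(0,\wfs(K))$ the generalized gradient flow of $d_K$ (Grove) yields a strong deformation retraction of $D_{\e}(K)$ onto $K$ whose trajectories are non-increasing in $d_K$, so they never leave $D_{\e}(K)$.

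First I would fix $\e_x\in(0,\wfs(K))$ small enough that, for every $\alpha\in[0,\e_x]$, the quantities $\RR_{X^{w}}(\alpha)$ and $\rr_{X^{w}}(\alpha)$ from the proof of Theorem \ref{thm:mod_seemly} define a non-empty interval of admissible radii $\rho$, and such that every sublevel set $d_K^{-1}([0,\alpha])$ with $\alpha\le\e_x$ deformation retracts onto $K$ via the $d_K$-flow. For each $\e\in(0,\e_x]$ this yields a strong deformation retraction $H_{\e}\colon D_{\e}(K)\times[0,1]\to D_{\e}(K)$ onto $K$, playing the role previously played by the deformation through $B_{\alpha}(K)$ with $\alpha>\e$.

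Next I would splice $H_{\e}$ with the local Thom-isotopy trivializations around $S_{\rho}(x)\cap K$ used in Theorem \ref{thm:mod_seemly}, via a partition of unity at the level of vector fields and then integration, to produce a deformation $G_{x,\rho}$ of $D_{\e}(K)$ onto $K$ that both stays inside $D_{\e}(K)$ and leaves $S_{\rho}(x)$ invariant, uniformly for $x\in X^{w}$ and $\rho$ in an interval $[\rho_l,\rho_u]\subseteq(\rr_{X^{w}}(\e_x),\RR_{X^{w}}(\e_x))$. Substituting $G_{x,\rho}$ into diagram \eqref{eq:dgm_proof_ms} with $\alpha$ replaced by $\e$ yields $(\rho,\rho,\e)\in A_{X^{w}}(\e)$. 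Setting $\alpha_{X^{w}}^{u}(\e)\equiv\e_x$, $\alpha_{X^{w}}^{l}(\e)=\e$, and choosing $\rho^{l}_{X^{w}}(\e,\alpha)=\rr_{X^{w}}(\alpha)+\beta(\alpha)$, $\rho^{u}_{X^{w}}(\e,\alpha)=\RR_{X^{w}}(\alpha)-\beta(\alpha)$ for a suitable $\beta(\alpha)\searrow 0$, exactly as in the moderate case, verifies all four conditions of strong seemliness together with the required monotonicities.

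The main obstacle will be guaranteeing that the spliced deformation $G_{x,\rho}$ actually preserves the invariance $d_K(G_{x,\rho}(y,t))\le d_K(y)$ uniformly in $(x,\rho)\in X^{w}\times[\rho_l,\rho_u]$. The tangential-to-$S_{\rho}(x)$ component of the Thom trivialization is perpendicular to $\nabla d(\cdot,x)$, not to $\nabla d_K$, so it is not automatic that its convex combination with the radial-to-$K$ flow remains non-positive against $\nabla d_K$. Transversality of $S_{\rho}(x)$ to every stratum should supply the needed splitting near $S_{\rho}(x)\cap K$, but producing a uniform estimate on compact $X^{w}\times[\rho_l,\rho_u]$ and gluing it consistently with $H_{\e}$ by a partition of unity is the technical heart of the argument. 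Once this is done, the strict-monotonicity construction of $\e(\alpha)$ from Theorem \ref{thm:mod_seemly} carries over unchanged and completes the proof.
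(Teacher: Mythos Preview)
Your plan correctly isolates the role of positive weak feature size and correctly names the central obstacle. However, the single-stage architecture you propose---splicing the $d_K$-gradient flow $H_\e$ with the Thom trivializations of Theorem~\ref{thm:mod_seemly} to obtain one deformation that both reaches $K$ and stays inside $D_\e(K)$---runs into the very difficulty you flag, and it is not clear that ``transversality of $S_\rho(x)$ to every stratum'' resolves it. The Thom component deforms points toward $K$ along spheres $S_\vrho(y)$ centered at points of $X$, and there is no reason those paths are $d_K$-non-increasing, regardless of any transversality statement at points of $K$.

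The paper sidesteps this with a two-stage concatenation. In the first stage it does not splice with the Thom trivializations at all; it only modifies the Grove gradient-like vector field of $d_K$ so that $S_\rho(x)$ becomes invariant. The modification is possible precisely when the angle $\angle(n_y,Q_y)$ between the normal to $S_\rho(x)$ and the directions from $y$ to its nearest points in $K$ is bounded below by some $\theta>0$ on $(D_\alpha(K)-K)\cap S_\rho(x)$; under that bound the modified field remains gradient-like for $d_K$, so its flow stays in $D_\alpha(K)$ automatically. Running this flow carries $D_\alpha(K)$ into $D_{\e(\alpha)}(K)$, where $\e(\alpha)$ is the function already produced in Theorem~\ref{thm:mod_seemly}. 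The second stage is literally the deformation of Theorem~\ref{thm:mod_seemly}, which takes $D_{\e(\alpha)}(K)$ to $K$ through $B_\alpha(K)$ while preserving $S_\rho(x)$. The concatenation then witnesses $(\rho,\rho,\alpha)\in A_{X^w}(\alpha)$, i.e., one may take $\alpha_{X^w}^l(\e)=\e$. Your difficulty evaporates because the first stage never has to reach $K$, and the second stage never has to stay in $D_\e(K)$---only in $B_\alpha(K)$, which was already established.

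The angle bound itself is obtained by contradiction: assuming sequences with $\alpha_i\to 0$ and $\angle(n_{y_i},Q_{y_i})\to 0$, one transfers via the stratified homeomorphisms of Theorem~\ref{thm:mod_seemly} to a fixed sphere $S_{\hat\rho}(\hat x)$ and shows that a limiting secant line fails to lie in the limiting tangent plane, contradicting Whitney condition~(b) for the Whitney stratified sphere $S_{\hat\rho}(\hat x)$. So the missing ingredient in your sketch is specifically condition~(b) applied to the stratified spheres, via a limiting argument, not merely transversality of $S_\rho(x)$ to the strata of $K$.
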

\begin{proof}
    As in the proof of Theorem \ref{thm:mod_seemly}, we shall use the
    fact that a small enough neighborhood of the form $D_{\alpha}(K)$
    strongly deforms onto $K$ through a slightly larger neighborhood
    (the one that actually deformation retracts onto $K$). But now we
    can also use the fact that $K$ has a positive weak feature size, so
    a small enough neighborhood of the form $D_{\alpha}(K)$ actually
    deformation retracts onto a smaller neighborhood $D_{\alpha'}(K)$,
    $\alpha'<\alpha$.

    So, let $U\supseteq K$ be a neighborhood of $K$ such that there is a
    strong deformation retraction $G:U\times[0,1]\to U$ of $U$ onto $K$.
    We can find $\alpha'>0$ such that $B_{\alpha'}(K)\subseteq U$,
    and we can assume $\alpha'<\wfs(K)$. Using the result in \cite{grove1993}, we can
    construct a gradient like vector field on $B_{\alpha'}(K)-K$  whose
    flow provides an isotopy between $S_{\alpha}(K)$ for
    $\alpha$ $\in$ $(0,\alpha']$. Take a stratum $X\in\st$ and
    choose $w>0$ as in Theorem \ref{thm:mod_seemly}. The claim of the theorem will follow if we
    find $\alpha_0$ $\in$ $(0,\alpha']$ such that for any
    $\alpha$ $\in$ $(0,\alpha_0]$, $x\in X^{w}$, and $\rho$ $\in$
    $[\rho_l,\rho_u]$, with
    $0<$ $\rho_l<$ $\rho_u<$ $\RR_{X^{w}}(0)$, we can modify this vector field to
    make the sphere $S_{\rho}(x)$ invariant with respect to the flow.
    Indeed, if this is true then we can deform $D_{\alpha}(K)$ until its
    image is inside $D_{\e(\alpha)}(K)$, where $\e(\alpha)$ is the
    function from Theorem \ref{thm:mod_seemly}, and then apply the
    deformation from Theorem \ref{thm:mod_seemly}.  Consequently, the
    concatenation of these two deformations allows us to choose
    $\e(\alpha)=\alpha$.

    For $y\in B_{\alpha'}(K)$, denote by $C_{y}$ be the set of its closest
    points in $K$, by $Q_{y}$ the set of directions from $y$ to $C_{y}$,
    and by $L_{y}$ the set of lines along these directions. Examining the
    construction of the gradient like vector field in \cite{grove1993}, we see
    that the needed modification of it is possible for
    $\alpha\in(0,\alpha']$ if the following conditions are satisfied:
    for any
    $x\in X^{w}$, $\rho\in [\rho_l,\rho_u]$, and $y\in (D_{\alpha}(K)-K)\cap
    S_{\rho}(x)$, the angle $\angle(n_{y},Q_{y}) \geq \theta$ for some
    $\theta>0$, where $n_{y}$ is the set of normal directions to
    $S_{\rho}(x)$ at $y$. To prove this, let us assume the opposite. Then we can find
    sequences
    \begin{equation*}
        \alpha_i \to 0,\quad \theta_i \to 0,\quad
        \rho_i \to \hat{\rho} \in[\rho_l,\rho_u],\quad
        x_i \to \hat{x} \in X^{w},
    \end{equation*}
    and a sequence
    \begin{equation*}
        y_i \in D_{\alpha_i}(K)\cap S_{\rho_i}(x_i),\quad
        y_i \to \hat{y} \in K\cap S_{\hat{\rho}}(\hat{x}),
    \end{equation*}
    such that
    $\angle(n_{y_i},Q_{y_i})$ $<$ $\theta_i$. Moreover, $C_{y_i}$ $\to$
    $\{\hat{y}\}$ $\subseteq$ $Z$, where $Z$ is a stratum of
    $S_{\hat{\rho}}(\hat{x})\cap K$.
    Using the family of
    stratified homeomorphisms from Theorem \ref{thm:mod_seemly}, and
    passing to a subsequence if necessary, we obtain sequences
    \begin{equation*}
        T_{z_i} S_{\hat{\rho}}(\hat{x})\to \tau \quad\text{ and }\quad
        l_i=\overline{z_i,\hat{y}}\to l,
    \end{equation*}
    where $z_i$ and $T_{z_i}S_{\hat{\rho}}(\hat{x})$ are the images of $y_i$,
    $T_{y_i} S_{\rho_i}(x_i)$ under the stratified homeomorphisms (which
    are smooth on each stratum). By construction, the distance (in the
    projective space) between
    the normal lines at $y_i$ and the set of lines $L_{y_i}$ goes to
    zero. Note that $C_{y_i}$ are the points of tangency between
    $S_{d_i}(y_i)$ and the strata $Y\geq X$, where $d_i=d(y_i, K)$. By passing to a
    subsequence if necessary, we get points $p_i$ $\in$ $C_{y_i}$ such that
    \begin{equation*}
        T_{p_i}Y \to \tau' \supseteq T_{\hat{y}}X \supseteq T_{\hat{y}}Z.
    \end{equation*}
    Consequently, the distance
    between the normal lines at $y_i$ and $\overline{y_i\hat{y}}$ goes to
    zero. By smoothness of our stratified homeomorphisms,
    this implies that the distance between the normal lines at
    $z_i$ and $l_i$ also goes to zero. Therefore, $l\not\subseteq\tau$,
    which contradicts Whitney condition (b).

\end{proof}

Let us now assume that $\X$ is a Euclidean space.
The \emph{reach} (also known as the minimum local feature
size) of a boundaryless submanifold $M\subset\X$, denoted $\reach(M)$, is defined as
the supremum over all numbers $\alpha>0$ such that the normal bundle of $M$
of radius $\alpha$ is embedded in $\X$ \citep[see e.g.][]{niyogi.etal2008}. In this case,
$S_{\alpha}(M)$ for $\alpha\in(0,\reach(M))$ are smooth submanifolds of
$\X$ of co-dimension $1$ (the boundary of the normal bundle of radius
$\alpha$).

\begin{lemma}\label{lem:reach_hom}
    Let $\alpha$ $\in$ $(0,\reach(M))$. Then for any $\rho$ $\in$
    $(\alpha$, $2\reach(M) - \alpha)$ and any $x$ $\in$ $M$ we have
    $$
    \h(D_{\alpha}(M), D_{\alpha}(M) - B_{\rho}(x))\approx \h(M,
    M-B_{\rho}(x))\approx \h(M, M-\{x\}).
    $$
\end{lemma}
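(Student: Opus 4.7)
The plan is to prove both isomorphisms by five-lemma arguments applied to the long exact sequences of the pairs. For the second isomorphism $\h(M, M - B_\rho(x)) \approx \h(M, M - \{x\})$, I would compare the long exact sequences of $(M, M - B_\rho(x))$ and $(M, M - \{x\})$ under the inclusion; since the total space is $M$ in both rows, the middle vertical arrow is the identity, so it suffices to show that the inclusion $M - B_\rho(x) \hookrightarrow M - \{x\}$ induces an isomorphism on homology. This follows from the negative gradient flow of $f(y) = \|y - x\|^2$ restricted to $M$: the standard bottleneck characterization of reach (a point $y \in M - \{x\}$ with $y - x \perp T_y M$ must satisfy $\|y - x\| \geq 2 \reach(M)$) shows that $x$ is the unique critical point of $f|_M$ inside $M \cap B_{2\reach(M)}(x)$, and since $\rho < 2\reach(M) - \alpha < 2\reach(M)$, the flow gives a strong deformation retraction of $M - \{x\}$ onto $M - B_\rho(x)$.

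For the first isomorphism $\h(D_\alpha(M), D_\alpha(M) - B_\rho(x)) \approx \h(M, M - B_\rho(x))$, I would compare the long exact sequences of $(M, M - B_\rho(x))$ and $(D_\alpha(M), D_\alpha(M) - B_\rho(x))$ under the inclusion. Because $\alpha < \reach(M)$, the nearest point projection $\pi_M \colon D_\alpha(M) \to M$ is well-defined and continuous, and the straight-line homotopy $F_t(y) = (1-t)y + t\pi_M(y)$ realizes $M \hookrightarrow D_\alpha(M)$ as a strong deformation retract, giving $H_*(M) \cong H_*(D_\alpha(M))$. By the five-lemma, the lemma then reduces to showing that the inclusion $M - B_\rho(x) \hookrightarrow D_\alpha(M) - B_\rho(x)$ induces an isomorphism on homology.

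The main obstacle is that the straight-line retraction $F_t$ does \emph{not} preserve the subspace $D_\alpha(M) - B_\rho(x)$: if $\|\pi_M(y) - x\| < \rho \leq \|y - x\|$, the segment from $y$ to $\pi_M(y)$ crosses into $B_\rho(x)$, and one only obtains the weaker bound $\|F_t(y) - x\| \geq \rho - \alpha$. My plan is to modify $F_t$ near the sphere $S_\rho(x) \cap D_\alpha(M)$ by smoothly blending the inward field $\pi_M(y) - y$ with the outward radial field $y - x$ via a partition of unity supported in a thin collar of that sphere, choosing the weights so that the resulting continuous vector field is tangent to $S_\rho(x)$ on $S_\rho(x) \cap D_\alpha(M)$ and agrees with $\pi_M(y) - y$ outside the collar. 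The inequality $\rho + \alpha < 2\reach(M)$ is what makes this feasible: it ensures that the ``corner'' region $\{y \in D_\alpha(M) - B_\rho(x) : \pi_M(y) \in B_\rho(x)\}$ is projected by $\pi_M$ into $M \cap B_{\rho+\alpha}(x)$, where, by the bottleneck argument used in the second isomorphism, $x$ is the only critical point of $f|_M$. This rules out stationary points of the blended field outside $\{x\}$ and guarantees that its forward flow sweeps the corner onto $M - B_\rho(x)$, yielding the required strong deformation retraction.

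The most delicate step is verifying continuity of this blended flow at the double corner $S_\rho(x) \cap S_\alpha(M)$, where the modified vector field must simultaneously avoid escaping $D_\alpha(M)$ (via tangency to $S_\alpha(M)$) and avoid entering $B_\rho(x)$ (via tangency to $S_\rho(x)$); the transversality of these two spheres, which again follows from $\rho + \alpha < 2\reach(M)$ together with the definition of reach, is what permits a simultaneous tangential decomposition. Once the deformation retraction is constructed, the five-lemma closes the argument and, concatenated with the second isomorphism, yields the stated chain of isomorphisms.
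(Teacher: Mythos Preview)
Your proposal is correct and follows essentially the same route as the paper. The paper first proves directly that $S_\rho(x)$ is transverse to $M$ and to every $S_{\alpha'}(M)$, $\alpha'\in[0,\alpha]$, via the same bottleneck/midpoint contradiction you invoke; it then uses this to conclude (i) $D_\rho(x)\cap M$ is a ball (your critical-point argument for the second isomorphism), and (ii) the angle between the normal direction $\overline{py}$ and the radial direction $\overline{px}$ is bounded away from zero, which lets one modify the nearest-point retraction so that $S_\rho(x)$ is invariant---exactly your ``blended field'' construction for the first isomorphism. Your five-lemma framing is a cosmetic repackaging of what the paper obtains directly from a retraction of pairs.
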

\begin{proof}
    The sphere $S_{\rho}(x)$ is transverse to $M$ as well as to all
    $S_{\alpha'}(M)$, $\alpha'\in[0,\alpha]$. Suppose this is not
    the case. Then we have a point of tangency $p\in S_{\rho}(x)\cap
    S_{\alpha'}(M)$. Hence, the line $\overline{xp}$ is normal to
    $S_{\alpha'}(M)$ at $p$. Also,
    $p$ belongs to the normal ball at some $y\in M$. This implies that 
    the normal line at $y$ through $p$ coincides with $\overline{xp}$.
    The line segment $[xy]$ has length $\rho+\alpha'<2\reach(M)$. Then
    its midpoint $q=(x+y)/2$ has distance $d(q,y)=\rho_q<\reach(M)$, and
    therefore belongs to the closed normal ball at $y$ of radius $\rho_q$.
    On the other hand, since $d(q,x)=\rho_q$, we have a point $z\in M$
    closest to $q$ with $d(q,z)\leq \rho_q$. Thus, $\overline{zq}$ normal
    to $M$ at $z$, which implies that $q$ belongs to the closed normal ball
    at $z$ of radius $\rho_q$. But then a normal bundle of radius
    $\rho'\in(\rho_q,\reach(M))$ is not embedded in $\X$. Contradiction.

    This implies, in particular, that $D_{\rho}(x)\cap M$ is a closed
    (topological) ball, as it is a sublevel set of the smooth function $f_x(\cdot) =
    d(x,\cdot)^2$, and contains a single critical point, the minimum, at
    $x$. Consequently,
    \begin{equation*}
        \h(M, M-B_{\rho}(x))\approx \h(M,M-\{x\}).
    \end{equation*}

    Each point $p\in D_{\alpha'}(M)$ has a unique closest point $y\in
    M$. Moreover, the above transversality result implies that if $p\in
    S_{\rho}(x)$ then the angle
    between the lines $\overline{py}$ and $\overline{px}$ is bounded
    away from zero. Therefore, as in the proof of Theorem
    \ref{thm:strong_seemly}, we can construct a deformation retraction
    of $D_{\alpha'}(M)$ onto $M$ such that $S_{\rho}(x)$ stays
    invariant. This implies
    \begin{equation*}
        \h(D_{\alpha'}(M), D_{\alpha'}(M)-B_{\rho}(x))\approx \h(M, M-B_{\rho}(x)).
    \end{equation*}
\end{proof}

We shall now combine the above lemma with Corollary
\ref{cor:lh_strong_recover} to strengthen our local homology recovery
result for submanifolds. So, we assume
that $K\subset \R^n$ is a closed smooth submanifold with reach $\nu>0$,
and $P\subset \R^n$ is its $\e$-sample. As in Section \ref{sec:seemly},
we let $p\in P$ be an arbitrary point, and $x\in K$
be either a point with $d(x,p)<\e$, if $P$ is noisy, or the closest
point to $p$, if $P$ is noise-free. We recall notation
\eqref{nom:const}, \eqref{nom:hom}, and \eqref{nom:func}, introduced in Section
\ref{sec:prelim}, and define 
\begin{equation*}
    \beta=(c^2+(t+1)c+t),\quad \gamma=(c^2+(t+3)c+5t+2).
\end{equation*}
We can now state the following result.
\begin{theorem}\label{thm:manifold_seemly}
    With the assumptions and notation stated above, suppose that
    $\e<\dfrac{2\nu}{2\beta+\gamma}$. Take any
    \begin{equation*}
        R \in ((\beta+\gamma-t-1)\e, 2\nu-(\beta+t+1)\e),\quad
        r \in (2\beta\e, R-(\gamma-\beta)\e).
    \end{equation*}
    Then
    \begin{align*}
        \im\big( C(\e,R)\to C((1+c+t)\e, r) \big)&\approx\h(K,K-\{x\}),\quad c=1\\
        \im\big( V(\e,R)\to V((1+c+t)\e, r) \big)&\approx\h(K,K-\{x\}),\quad c=\sqrt{2}
    \end{align*} 
\end{theorem}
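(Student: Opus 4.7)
The plan is to derive this theorem as a direct specialization of Corollary \ref{cor:lh_strong_recover}, using Lemma \ref{lem:reach_hom} to read off explicit formulas for the seemliness quantities $\rr_x$ and $\RR_x$.

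First, I would verify that $K$ is strongly locally homologically seemly with very explicit data. Lemma \ref{lem:reach_hom} says that for every $x \in K$, every $\alpha \in (0,\nu)$, and every $\rho \in (\alpha, 2\nu - \alpha)$, the inclusion induced map $\h(D_\alpha(K), D_\alpha(K) - B_\rho(x)) \to \h(K, K-\{x\})$ is an isomorphism. Reading this against the definition of $A_x(\e)$ in diagram \eqref{eq:def_A} and the definition of strong seemliness, this gives us immediately that $(\rho,\rho,\alpha) \in A_x(\alpha)$ for all $\rho \in (\alpha, 2\nu - \alpha)$, i.e. $K$ is strongly homologically seemly at every point with
\[
  \rr_x(\alpha) = \alpha, \qquad \RR_x(\alpha) = 2\nu - \alpha, \qquad \tau_x(\alpha) = 2\nu - 2\alpha.
\]
In particular there are no boundary strata to worry about, so we can take $w = 0$ and work with $x$ itself rather than $X^w$.

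Next, I would plug these into Corollary \ref{cor:lh_strong_recover}. Writing $\beta$ and $\gamma$ for the coefficients $c^2+(t+1)c+t$ and $c^2+(t+3)c+5t+2$ respectively, the corollary requires $\tau_x(\beta\e) > \gamma\e$, which becomes $2\nu - 2\beta\e > \gamma\e$, i.e.\ exactly the hypothesis $\e < \dfrac{2\nu}{2\beta + \gamma}$. The convexity condition $c(1+c+t)\e < \conv(P)$ is automatic because $\X = \R^n$ has infinite convexity radius. Thus the corollary applies.

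Finally, I would translate the admissible scale ranges given by the corollary. The corollary allows $R' \in (\rr_x(\beta\e) + \gamma\e, \RR_x(\beta\e))$ and $r' \in (\rr_x(\beta\e), R' - \gamma\e)$, which in our case is
\[
  R' \in ((\beta + \gamma)\e,\ 2\nu - \beta\e), \qquad r' \in (\beta\e,\ R' - \gamma\e).
\]
Setting $R = R' - (1+t)\e$ and $r = r' + \beta\e$ as in the corollary, and rewriting in terms of $R$, yields the stated ranges for $R$ and $r$ (up to a harmless tightening of the upper bound on $r$), and the isomorphism statements for $\im(C(\e,R) \to C((1+c+t)\e, r))$ and $\im(V(\e,R) \to V((1+c+t)\e, r))$ follow directly from Corollary \ref{cor:lh_strong_recover}.

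There is essentially no hard step here: all the topological work is already buried inside Lemma \ref{lem:reach_hom} (transversality of $S_\rho(x)$ with the tubular hypersurfaces of $K$) and Corollary \ref{cor:lh_strong_recover}. The only thing requiring care is the bookkeeping of the scale shifts in passing from the abstract $\rr_x,\RR_x$ to the concrete $R,r$, and verifying that the hypothesis $\e < 2\nu/(2\beta + \gamma)$ is exactly what makes the admissible intervals non-empty.
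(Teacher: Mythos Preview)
Your proposal is correct and takes essentially the same approach as the paper: the paper's proof is a two-line appeal to Corollary~\ref{cor:lh_strong_recover}, noting that Lemma~\ref{lem:reach_hom} gives $\rr_x(\alpha)=\alpha$ and $\RR_x(\alpha)\geq 2\nu-\alpha$, whence $\tau_x(\beta\e)>\gamma\e$ is exactly the hypothesis $\e<2\nu/(2\beta+\gamma)$. Your write-up simply unpacks this in more detail (including the scale bookkeeping and your observation about the slight tightening of the upper bound on $r$, which the paper does not comment on).
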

\begin{proof}
    The proof follows directly from Corollary
    \ref{cor:lh_strong_recover}, since Lemma \ref{lem:reach_hom} implies
    that for any $x\in M$ and $\alpha\in(0,\nu)$
    we have $\rr_{x}(\alpha)=\alpha$, $\RR_{x}(\alpha)\geq 2\nu-\alpha$,
    and hence the condition $\tau_x(\beta)>\gamma$ follows from
    $\e<\dfrac{2\nu}{2\beta+\gamma}$.
\end{proof}
The above theorem provides an improvement over the analogous result in
\cite{dey.etal2014}, since $2\beta+\gamma\leq 15+8\sqrt{2}$. We can also obtain a
similar result for manifolds with boundary.

\begin{corollary}
    Suppose that $K\subset \R^n$ is a smooth, compact submanifold with
    boundary. Let
    \begin{equation*}
        \nu=\min\{\reach(K), \reach(\partial K)\},
    \end{equation*}
    and keep the rest of
    the assumptions and notation of Theorem \ref{thm:manifold_seemly}.
    Then the result of Theorem \ref{thm:manifold_seemly} holds for
    $p\in P\cap B_{\e}(\partial K)$, with unchanged $R$ and $r$, as well
    as for $p\in P\cap B_{\e}(K^{w})$, where
    \begin{equation*}
        w\in((2\beta+\gamma)\e, 2\nu)
    \end{equation*}
    is such that $K^{w}=K-D_{w}(\partial K)\neq\emptyset$. In the
    latter case, we also impose the restriction $R<w$.
\end{corollary}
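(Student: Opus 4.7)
The plan is to regard $K$ as a Whitney stratified set with two strata, the closed stratum $\partial K$ and the open stratum $K-\partial K$, both of which are smooth boundaryless submanifolds of $\R^n$ of reach at least $\nu$. Each of the two cases of the corollary will be handled by verifying that the hypotheses of Corollary \ref{cor:lh_strong_recover} continue to hold at the associated point $x\in K$, so that the conclusion of Theorem \ref{thm:manifold_seemly} follows in essentially the same way.

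For $p\in P\cap B_{\e}(K^{w})$, the plan is a direct reduction to Theorem \ref{thm:manifold_seemly}. Since $d(x,p)\leq\e$ and $p\in B_{\e}(K^{w})$, the associated $x$ satisfies $d(x,\partial K)>w-2\e>(2\beta+\gamma-2)\e$. The restriction $R<w$, together with the upper bound $R<2\nu-(\beta+t+1)\e$ from Theorem \ref{thm:manifold_seemly} and the fact that the global scales used stay below $(1+c+t)\e$, will force the ball $B_{R}(x)$ and the ambient neighborhoods $D_{\alpha}(K)\cap B_{R}(x)$ appearing in the argument to stay disjoint from $\partial K$. On this region $K$ is indistinguishable from a smooth boundaryless submanifold of $\R^n$ of reach at least $\nu$, so Lemma \ref{lem:reach_hom} applies verbatim, yielding $\rr_{x}(\alpha)=\alpha$ and $\RR_{x}(\alpha)\geq 2\nu-\alpha$. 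Corollary \ref{cor:lh_strong_recover} then delivers the claim exactly as in the proof of Theorem \ref{thm:manifold_seemly}.

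For $p\in P\cap B_{\e}(\partial K)$, the associated $x$ lies either on $\partial K$, in which case $\h(K,K-\{x\})$ is trivial because $K$ is locally a half-space at $x$, or in a thin collar just inside. The strategy is to extend Lemma \ref{lem:reach_hom} to the stratified setting by applying its midpoint argument twice, once with $\reach(\partial K)\geq\nu$ and once with $\reach(K-\partial K)\geq\nu$, in order to conclude that for every $\rho$ in the range prescribed by Theorem \ref{thm:manifold_seemly} the sphere $S_{\rho}(x)$ is transverse to $\partial K$ and to each level set $S_{\alpha'}(K)$, $\alpha'\in[0,\alpha]$. This transversality, combined with a partition of unity patching as in the proof of Theorem \ref{thm:strong_seemly}, will produce a strong deformation retraction of $D_{\alpha}(K)$ onto $K$ leaving $S_{\rho}(x)$ invariant. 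A commutative diagram analogous to \eqref{eq:dgm_proof_ms} then shows that the inclusion $F(0,\rho)\to F(\alpha,\rho)$ is an isomorphism for all such $\rho$, so strong local homological seemliness holds at $x$ with $\rr_{x}(\alpha)=\alpha$ and $\RR_{x}(\alpha)\geq 2\nu-\alpha$. Corollary \ref{cor:lh_strong_recover} then yields the stated conclusion with the same $R$ and $r$ as in Theorem \ref{thm:manifold_seemly}.

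The main obstacle will be in the boundary case, where one has to juggle the two reaches simultaneously and build a sphere preserving deformation retraction through the collar around $\partial K$. The delicate point is the transversality of $S_{\rho}(x)$ with both $\partial K$ and the level sets $S_{\alpha'}(K)$ when $x$ itself sits on or very near $\partial K$: the midpoint argument of Lemma \ref{lem:reach_hom} has to be applied first to $\partial K$, controlling $\angle(n_{y},Q_{y})$ in the collar, before it can be propagated to $K$; the resulting local sphere preserving deformations must then be glued, via partition of unity, with the ambient retraction onto $K$ without destroying the invariance of $S_{\rho}(x)$. Once this patching is carried out in the regime $R<2\nu-(\beta+t+1)\e$, the rest of the argument is routine bookkeeping and inherits directly from Theorem \ref{thm:manifold_seemly}.
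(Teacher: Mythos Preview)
Your interior case matches the paper exactly: invoke Corollary \ref{cor:lh_strong_recover}, with the restriction $R<w$ keeping the relevant balls away from $\partial K$ so that Lemma \ref{lem:reach_hom} applies as stated. The paper's own proof is literally two sentences and says just this; it offers no separate argument for the boundary case beyond the phrase ``follows directly from Corollary \ref{cor:lh_strong_recover}.''

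Your boundary-case plan---extending the midpoint/transversality argument of Lemma \ref{lem:reach_hom} to both $\partial K$ and the level sets $S_{\alpha'}(K)$, then building a sphere-preserving retraction as in Theorem \ref{thm:strong_seemly}---is a legitimate way to supply the verification the paper leaves implicit, and is considerably more work than the paper records. Two cautions are in order. First, $K-\partial K$ is an open submanifold, and the paper's definition of reach (via embedded normal tubes) is stated only for boundaryless submanifolds; asserting ``$\reach(K-\partial K)\geq\nu$'' is not well-posed as written. What the midpoint argument actually needs is $\reach(K)\geq\nu$ in the Federer sense (unique nearest point in $K$ throughout the $\nu$-tube of the closed set $K$), together with $\reach(\partial K)\geq\nu$; run the tangency contradiction against those. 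Second, you correctly note that in the noise-free case $x$ may sit strictly in the interior within the $\e$-collar of $\partial K$; there $\h(K,K-\{x\})$ is the nontrivial top-dimensional class, so this subcase cannot be folded into the $x\in\partial K$ discussion, and your transversality-and-retraction construction must be checked to cover it uniformly. Once these two points are tightened, your argument goes through and fills in precisely what the paper left to the reader.
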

\begin{proof}
    As Theorem \ref{thm:manifold_seemly}, this result follows directly
    from Corollary \ref{cor:lh_strong_recover}. The condition $R<w$
    guarantees that the result of Lemma \ref{lem:reach_hom} holds.
\end{proof}
The Corollary provides an example when we can obtain a specific estimate on
the size of the region where recovery of local homology may not succeed.

\section{Conclusion}
\label{sec:conclusion}

By introducing a new concept of local homological seemliness, we showed
that local homology can be recovered even from fairly general homology
stratified sets, and it can be done using Vietoris-Rips complexes. We
also showed that the size of the region where recovery may not be
feasible decreases to zero as the sample becomes increasingly better. We
obtained a concrete bound on this size for the case of a smooth manifold
with boundary. It is feasible that one can obtain concrete bounds in
more general cases. In particular, it may be possible to show that if
$L\subset\R^n$ is a transverse union of finitely many smooth closed
manifolds, and $K\subseteq L$ is a closed stratified subset, then for
small enough $\e$ the size of the smallest $\bm{w}$ that still allows us
to recover local homology of $K^{\bm{w}}$ depends on $\e$ in a Lipschitz
way, i.e. $\|\bm{w}\|\leq C\e$ for some constant $C>0$. Obtaining
similar concrete bounds for general Whitney stratified sets is more challenging,
and is a possible direction of future research. In addition, we can try
to combine concrete size bounds on the region of
possible recovery failure with various sampling schemes to obtain concrete
estimates on the fractions of points where local homology recovery may
fail. 

Our results can also be used to develop an alternative algorithm for
assigning the points of an $\e$-sample $P$ to the corresponding strata
of $K$.  Indeed, for each point $p\in P$ there is another point $q\in P$
and a point $x\in K$ such that $d(x,p)<\e$, $d(x,q)<\e$.  Consulting the
proof of Theorem \ref{thm:lh_recover} and reusing that notation, we see
that if the conditions of the theorem are satisfied then it follows from
the diagram \eqref{eq:lh_diag} that the image of the homomorphism
$L_{c}^{p}(\e,R)\to L_{c}^{q}(\e'+\e,r)$ (where the superscript denotes
the point for which $L_c$ is constructed) also captures local homology
at $x$. This suggests that one can try to transitively group together
all points $p,q\in P$ with distance $d(p,q)<2\e$ for which the images of
$L_{c}^{p}(\e,R)$ $\to$ $L_{c}^{q}(\e'+\e,r)$ and $L_{c}^{q}(\e,R)$
$\to$ $L_{c}^{q}(\e'+\e,r)$ coincide. Of course, there are multiple caveats, as
described in \cite{bendich.etal2010}, and additional research in this
direction is needed.

\appendix
\section*{Appendix}
\begin{proof}[Proof of Lemma \ref{lem:trans_union}]\leavevmode
    It is known that (b)-regular stratified sets belong to a wider class
    of (c)-regular stratified sets introduced in
    \citep{bekka1991}. The definition of (c)-regularity is somewhat
    technical, and we refer the interested reader to the original paper
    by Bekka or to the book by Pflaum \citep[see][Section
    1.4.13]{pflaum2001}. It is also known that 
    transverse union of (c)-regular stratified sets with strata
    $\st$ and $\mathcal{R}$ is again a (c)-regular stratified set with
    the stratification $\st\cup_t\mathcal{R}$. Let us show that condition
    (b) is satisfied for $\st\cup_t\mathcal{R}$. Take
    $Z,W\in \st\cup_t\mathcal{R}$, with $Z\leq W$. Let $z\in Z$, and
    consider sequences
    \begin{equation*}
        z_i,w_i\to z, \text{ with } z_i\in Z, w_i\in W
    \end{equation*}
    such that
    \begin{equation*}
        T_{w_i}W\to \tau,\quad l_i=\overline{w_iz_i}\to l.
    \end{equation*}
    Since condition (b) holds for both $\st$ and
    $\mathcal{R}$, it holds if $Z\neq X\cap Y$ for any $X\in\st$,
    $Y\in\mathcal{R}$.
    Suppose
    \begin{equation*}
        Z=X_1\cap Y_1,\quad X_1\in\st,\quad Y_1\in\mathcal{R}.
    \end{equation*}
    Since condition (b) holds for transverse intersections, we only need
    to consider the case when $W=X_2-Y_2$ or $W=Y_2-X_2$ for some
    $X_2\in\st$, $Y_2\in\mathcal{R}$. Without
    loss of generality, we may assume the former, $W=X_2-Y_2$. Since
    $Z\leq W$, we have $Z\subseteq X_1\leq X_2$. So, $z_i\in X_1$,
    $w_i\in X_2$, and since condition (b) holds for $X_1\leq X_2$, we get
    $l\subseteq\tau$.
\end{proof}

The following proof employs the meaning of $F(a,b)$ from \eqref{nom:hom}.
\begin{proof}[Proof of Lemma \ref{lem:A_0}]\leavevmode
    It follows from the proof of Lemma \ref{lem:A_0_X} that for all
    sufficiently small $\e\geq 0$ we can find $\rho_u>\rho_l>0$
    such that 
    $(\rho,\rho,\e)\in A_{x}(0)$ for $\rho\in[\rho_l,\rho_u]$.
    Thus, $A_{x}(0\,|\,\alpha=\e)\neq\emptyset$.

    Now, suppose $\rho_u>\rho_l>0$ and $\e>0$ are such that
    \begin{equation*}
        (\rho_l,\rho_l,\e) \in A_{x}(0),\quad
        (\rho_u,\rho_u,\e) \in A_{x}(0).
    \end{equation*}
    Take $\rho_1, \rho_2$ $\in$ $[\rho_l,\rho_u]$, $\rho_1\leq\rho_2$,
    $\e'\in[0,\e]$.
    We have the following commutative diagram:
    \begin{equation}\label{eq:dgm_proof_A_0}
    \begin{tikzcd}[column sep=small, row sep=small]
        F(\e,\rho_u)\ar[r] & F(\e,\rho_2)\ar[r] &
        F(\e,\rho_1)\ar[r] & F(\e,\rho_l)\\
        F(\e',\rho_u)\ar[r]\ar[u] & F(\e',\rho_2)\ar[r]\ar[u] &
        F(\e',\rho_1)\ar[r]\ar[u] & F(\e',\rho_l)\ar[u]\\
        F(0,\rho_u)\ar[r]\ar[u] & F(0,\rho_2)\ar[r]\ar[u] &
        F(0,\rho_1)\ar[r]\ar[u] & F(0,\rho_l)\ar[u]
    \end{tikzcd}
    \end{equation}
    The bottom row consists of isomorphisms, and the maps from the
    bottom row to the top row along each column are injective, yielding
    $(\rho_2,\rho_1,\e')\in A_{x}(0)$. The diagram also implies
    that if $(\rho_u,\rho_l,\e)\in A_{x}(0)$ then
    $(\rho,\rho,\e')\in A_{x}(0)$ for all
    $\rho\in[\rho_l,\rho_u]$, $\e'\in[0,\e]$. Consequently, each
    $A_{x}(0\,|\,\alpha=\e')$ is a right isosceles triangle with the legs
    parallel to the axes and the hypotenuse lying on the diagonal, and
    $A_{x}(0\,|\,\alpha=\e)\subseteq A_{x}(0\,|\,\alpha=\e')$

    Note that the remark at the end of the proof of Lemma
    \ref{lem:A_0_X} implies that the vertical distance between the
    horizontal leg of our right triangle $A_{x}(0\,|\,\alpha=\e)$ and
    the $R$-axis tends to zero as $\e\to 0$.

\end{proof}

The following proof employs the meaning of $F(a,b)$ from \eqref{nom:hom}.
\begin{proof}[Proof of Lemma \ref{lem:A}]\leavevmode

    Part \eqref{lem:A_1}. Consider diagram \eqref{eq:def_A}
    and note that $\ii$ factors through $F(\delta, R)$
    for $\delta\in(0,\e)$. Therefore, we can
    replace $\e$ in diagram \eqref{eq:def_A} by any $\delta\in[0,\e]$ retaining
    all of the properties. Hence,
    \begin{equation*}
        (R,r,\alpha)\in A_{x}(\e) \implies
        (R,r,\alpha)\in A_{x}(\delta)
    \end{equation*}
    for all $\delta\in[0,\e]$.

    Part \eqref{lem:A_2}. Assume that each of
    the sets contains at least two elements (otherwise its a degenerate
    interval). So, let
    \begin{equation*}
        \begin{aligned}
            &\alpha_1, \alpha_2 \in A_{x}(\e | R=R', r=r'),\\
            &r_1, r_2 \in A_{x}(\e | R=R', \alpha=\alpha'),\\
            &R_1,R_2\in A_{x}(\e | r=r',\alpha=\alpha'),
        \end{aligned}
    \end{equation*}
    with $\alpha_1<\alpha_2$, $r_1<r_2$, $R_1<R_2$, and let
    \begin{equation*}
        \alpha\in(\alpha_1,\alpha_2),\quad
        r\in(r_1,r_2),\quad
        R\in(R_1,R_2).
    \end{equation*}
    Inclusions yield the following commutative diagrams:
    \begin{equation}\label{eq:lem_A_2_alpha}
    \begin{tikzcd}
        F(0,0)&
        F(0,R)\ar[l,"\approx"]\ar[r,"\vphi_0"]&
        F(\e,R) \ar[r, "\vphi_1"] &
        F(\alpha_1,r) \ar[r, "\vphi_2"] &
        F(\alpha,r) \ar[r, "\vphi_3"] &
        F(\alpha_2,r)
    \end{tikzcd}
    \end{equation}
    \begin{equation}\label{eq:lem_A_2_r}
    \begin{tikzcd}
        F(0,0)&
        F(0,R)\ar[l,"\approx"]\ar[r,"\phi_0"]&
        F(\e,R) \ar[r, "\phi_1"] &
        F(\alpha,r_2) \ar[r, "\phi_2"] &
        F(\alpha,r) \ar[r, "\phi_3"] &
        F(\alpha,r_1)
    \end{tikzcd}
    \end{equation}
    \begin{equation}\label{eq:lem_A_2_R}
    \begin{tikzcd}
        F(\e,R_2) \ar[r, "\psi_1"] &
        F(\e,R) \ar[r, "\psi_2"] &
        F(\e,R_1) \ar[r, "\psi_3"] &
        F(\alpha,r)\\
        F(0,R_2)\ar[r,"\approx"]\ar[u,"\vphi_1"]&
        F(0,R)\ar[r,"\approx"]\ar[u,"\vphi_2"]&
        F(0,R_1)\ar[r,"\approx"]\ar[u,"\vphi_3"]&
        F(0,0)
    \end{tikzcd}
    \end{equation}
    In diagram \eqref{eq:lem_A_2_alpha}, $\vphi_0$ is injective and restrictions
    $\im{\vphi_0}\to\im{\vphi_1}$ and
    $\im{\vphi_0}\to\im{\vphi_3\circ\vphi_2\circ\vphi_1}$ are
    isomorphisms. Therefore, we must also
    have $\im{\phi_0}\to\im{\vphi_2\circ\vphi_1}$ is an isomorphism. Thus,
    \begin{equation*}
        \alpha_1,\alpha_2\in A_{x}(\e | R=R', r=r') \implies
        \alpha\in A_{x}(x | R=R', r=r')
    \end{equation*}
    for all $\alpha\in[\alpha_1,\alpha_2]$. An analogous analysis of diagram
    \eqref{eq:lem_A_2_r} gives the result for the set
    $A_{x}(\e|R=R',\alpha=\alpha')$.
    In diagram \eqref{eq:lem_A_2_R}, $\vphi_3$ is
    injective, therefore $\psi_2\circ\vphi_2$ is injective, and hence
    $\vphi_2$ is injective. And since $\im{\vphi_3}\to\im{\psi_3}$ is an
    isomorphism, $\im{\vphi_2}\to\im{\psi_3\circ\psi_2}$ is
    also an isomorphism. Thus,
    \begin{equation*}
        R_1, R_2\in A_{x}(\e | r=r', \alpha=\alpha') \implies
        R\in A_{x}(\e | r=r',\alpha=\alpha')
    \end{equation*}
    for all $R\in[R_1, R_2]$.

    Part \eqref{lem:A_3}. Take $R_2>R_1\geq R_u\geq
    r_u\geq r_1>R_l\geq r_l$ and consider the following commutative
    diagram induced by inclusions.
    \begin{equation}\label{eq:lem_A_3}
        \begin{tikzcd}[column sep=tiny]
            &&
        F(\alpha_1,r_u)\ar[r] &
        F(\alpha_1,r_1)\ar[r] &
        F(\alpha_1,R_l)\ar[r] &
        F(\alpha_1,r_l)\\
        F(\e,R_2) \ar[r]&
        F(\e,R_1) \ar[r] &
        F(\e,R_u) \ar[u]\ar[rr]&&
        F(\e,R_l)\ar[u]&\\
        F(0,R_2)\ar[u]\ar[r] &
        F(0,R_1)\ar[u]\ar[r,"\approx"] &
        F(0,R_u)\ar[u]\ar[r,"\approx"] &
        F(0,R_l)\ar[r,"\approx"]\ar[ur]&
        F(0,0)&
    \end{tikzcd}
    \end{equation}
    To simplify our exposition, we shall slightly abuse notation and
    write $(a,b)\to (c,d)$ to indicate the homomorphism $F(a,b)\to
    F(c,d)$, with $[(a,b)(c,d)]=\im{( (a,b)\to (c,d) )}$ denoting the
    corresponding image. Note that since $(R_1,r_1,\alpha_1)\in A_{x}(\e)$,
    the homomorphism $(0,R_1)\to(\e,R_1)$ is injective and
    $[(0,R_1)(\e,R_1)]\to[(\e,R_1)(\alpha_1,r_1)]$ is an isomorphism.

    To show the first inclusion, we start by showing that
    \begin{equation*}
        R_2\notin A_{x}(\e|r=r_1,\alpha=\alpha_1)\implies
        A_{x}(\e|R=R_2,\alpha=\alpha_1)=\emptyset.
    \end{equation*}
    Note that the map $(0,R_2)\to(0,R_1)$ in diagram \eqref{eq:lem_A_3} cannot be an
    isomorphism in this case.  Indeed, if it is, then
    \begin{equation*}
        [(0,R_2)(\e,R_1)] \approx
        [(0,R_1)(\e,R_1)] \approx
        [(0,R_2)(\e,R_2)],
    \end{equation*}
    hence 
    \begin{equation*}
        [(0,R_2)(\e,R_2)] \to [(\e,R_2)(\alpha_1,r_1)]
    \end{equation*}
    is an
    isomorphism, implying $R_2\in A_{x}(\e|r=r_1,\alpha=\alpha_1)$, which is a
    contradiction. But if $(0,R_2)\to(0,R_1)$ is not an isomorphism
    then it follows directly from the definition of $A_{x}(\e)$ that
    $(R_2,r',\alpha')\notin A_{x}(\e)$ for any $r',\alpha'$.

    Now let
    \begin{equation*}
        R_2\in A_{x}(\e|r=r_1,\alpha_1),
    \end{equation*}
    and suppose that
    \begin{equation*}
        r_l, r_u \in A_{x}(\e|R=R_1,\alpha=\alpha_1).
    \end{equation*}
    Then in diagram \eqref{eq:lem_A_3} the maps
    \begin{equation*}
        [(0,R_1)(\e,R_1)] \to [(\e,R_1)(\alpha_1,r_i)],\quad i\in\{l,u\},
    \end{equation*}
    as well as the map
    \begin{equation*}
        [(0,R_2)(\e,R_2)] \to [(\e,R_2)(\alpha_1,r_1)],
    \end{equation*}
    are isomorphisms.
    Since $(0,R_2)$ $\to$ $(0,R_1)$ is an isomorphism in this case, we have
    \begin{equation*}
        [(0,R_2)(\e,R_1)] \approx
        [(0,R_1)(\e,R_1)] \approx
        [(0,R_2)(\e,R_2)],
    \end{equation*}
    which implies that
    \begin{equation*}
        [(0,R_2)(\e,R_2)] \to [(\e,R_2)(\alpha_1,r_i)],\quad
        i\in\{l,u\},
    \end{equation*}
    are isomorphisms. Thus,
    \begin{equation*}
        r_l,r_u \in A_{x}(\e|R=R_2,\alpha=\alpha_1).
    \end{equation*}

    For the second inclusion, we again start by showing
    \begin{equation*}
        r_l\notin A_{x}(\e|R=R_1,\alpha=\alpha_1) \implies
        A_{x}(\e|r=r_l,\alpha=\alpha_1)=\emptyset.
    \end{equation*}
    So, suppose that the map
    \begin{equation*}
        [(0,R_1)(\e,R_1)]\to [(\e,R_1)(\alpha_1,r_l)]
    \end{equation*}
    is not an isomorphism. Since
    \begin{equation*}
        [(0,R_1)(\e,R_1)]\to [(\e,R_1)(\alpha_1,r_1)]
    \end{equation*}
    is an isomorphism, this implies that
    \begin{equation*}
        [(\e,R_1)(\alpha_1,r_1)] \to [(\alpha_1,r_1)(\alpha_1,r_l)]
    \end{equation*}
    is not injective. Clearly, if
    \begin{equation*}
        R_2 \notin A_{x}(\e|r=r_1,\alpha=\alpha_1)
    \end{equation*}
    then
    \begin{equation*}
        [(0,R_2)(\e,R_2)] \to [(\e,R_2)(\alpha_1,r_1)]
    \end{equation*}
    is not an isomorphism, implying that
    \begin{equation*}
        [(0,R_2)(\e,R_2)] \to [(\e,R_2)(\alpha_1,r_l)]
    \end{equation*}
    is not an isomorphism, i.e.
    \begin{equation*}
        R_2 \notin A_{x}(\e|r=r_l,\alpha=\alpha_1).
    \end{equation*}
    So, assume
    \begin{equation*}
        R_2\in A_{x}(\e|r=r_1,\alpha=\alpha_1).
    \end{equation*}
    Then we see that maps
    \begin{equation*}
        [(0,R_2)(\e,R_2)] \to [(\e,R_2)(\alpha_1,r_l)]
        \quad\text{ and }\quad
        [(0,R_u)(\e,R_u)] \to [(\e,R_u)(\alpha_1,r_l)]
    \end{equation*}
    are not isomorphisms.
    Therefore,
    \begin{equation*}
        R_2, R_u \notin A_{x}(\e|r=r_l,\alpha=\alpha_1).
    \end{equation*}
    Now, assuming that
    \begin{equation*}
        [(0,R_l)(\e,R_l)] \to [(\e,R_l)(\alpha_1,r_l)]
    \end{equation*}
    is an isomorphism would imply that
    \begin{equation*}
        [(0,R_l)(\alpha_1,R_l)] \to [(\alpha_1,R_l)(\alpha_1,r_l)]
    \end{equation*}
    is injective, which is impossible since
    \begin{equation*}
        [(0,R_1)(\alpha_1,r_1)] \to [(\alpha_1,r_1)(\alpha_1,r_l)]
    \end{equation*}
    is not injective. Thus, we also have
    \begin{equation*}
        R_l \notin A_{x}(\e|r=r_l,\alpha=\alpha_1),
    \end{equation*}
    and so
    \begin{equation*}
        A_{x}(\e|r=r_l,\alpha=\alpha_1) = \emptyset.
    \end{equation*}
    
    Now let
    \begin{equation*}
        r_l \in A_{x}(\e|R=R_1,\alpha=\alpha_1),
    \end{equation*}
    and suppose that
    \begin{equation*}
        R_2, R_u \in A_{x}(\e|r=r_1,\alpha=\alpha_1).
    \end{equation*}
    Then in diagram \eqref{eq:lem_A_3} the maps
    \begin{equation*}
        [(0,R_i)(\e,R_i)] \to [(\e,R_i)(\alpha,r)],\quad
        i\in\{u,m\},
    \end{equation*}
    as well as the maps
    \begin{equation*}
        (0,R_2) \to (0,R_1)\quad\text{ and }\quad
        [(0,R_1)(\e,R_1)] \to [(\e,R_1)(\alpha_1,r_l)],
    \end{equation*}
    are isomorphisms. The latter implies that
    \begin{equation*}
        [(\e,R_1)(\alpha_1,r_1)] \to [(\alpha_1,r_1)(\alpha_1,r_l)]
    \end{equation*}
    is injective. Therefore, 
    \begin{equation*}
        [(0,R_2)(\e,R_2)] \to [(\e,R_2)(\alpha,r_1)]
        \quad\text{ and }\quad
        [(0,R_u)(\e,R_u)] \to [(\e,R_u)(\alpha,r_1)]
    \end{equation*}
    are isomorphisms, and so
    \begin{equation*}
        R_2, R_u \in A_{x}(\e | r=r_l, \alpha = \alpha_1).
    \end{equation*}

    Part \eqref{lem:A_4} follows from the above results and the fact
    that intersection of intervals is an interval.
\end{proof}

\begin{proof}[Proof of Lemma \ref{lem:E_prop}]\leavevmode

    The first part follows immediately from the definitions and  the
    fact that\\
    \centerline{$A_{x}([0,\e_1]\,|\, \alpha\leq\beta_1) \supseteq
    A_{x}([0,\e_2]\,|\, \alpha\leq\beta_2)$.}

    For part two, note that
    $$
    0\leq \dd_{x}(\e,\beta)\leq
    \sup_{\alpha'\in[\alpha_{x}^{l}(\e), \alpha_{x}^{u}(\e)]}{
        d_H(A_{x}(0\,|\, \alpha=\alpha'), A_{x}(\e\,|\,
        \alpha=\alpha'))}\to 0\quad\text{as}\quad\e\to 0.
    $$
    The other limit follows from the fact that $\alpha$-sections of
    $A_{x}(0|\alpha\in[0,\beta])$ are right isosceles triangles (in the
    $(R,r)$-plane) whose lower legs descend to the $R$-axis as $\beta$ $\to
    0$ (see proofs of Lemmas \ref{lem:A_0_X}, \ref{lem:A_0} for details).

    Part three now follows from parts one and two. Indeed, weak
    seemliness implies that
    \begin{equation*}
        A_{x}([0,\e]\,|\, \alpha \leq \beta)
    \end{equation*}
    has non-empty interior for small enough $\e$ and $\beta\in[\alpha_{x}^{l}(\e),
    \alpha_{x}^{u}(\e)]$. Hence,
    \begin{equation*}
        \tau_{x}(\e,\beta)>0.
    \end{equation*}
    Then part two implies that by reducing $\e$ we can achieve
    \begin{equation*}
        \tau_{x}(\e',\beta)>\gamma,
    \end{equation*}
    where
    \begin{equation*}
        \e' = \alpha_{x}^{l}(g(0,\e)),\quad
        \beta = g(\e',\e),\quad
        \gamma=f(0,\e)+f(\e',\e).
    \end{equation*}
    Hence, $\e\in E_{x}(f,g)$.

    The claim that $E_{x}(f,g)$ is an interval follows from the
    monotonicity of $\tau_{x}(\e,\beta)$, since it implies that if
    $\e_2\in E_{x}(f,g)$ and $\e_1\leq\e_2$, then $\e_1\in E_{x}(f,g)$.
\end{proof}

\begin{proof}[Proof of Lemma \ref{lem:A_0_X}]\leavevmode
    We shall prove the statement of the lemma for any compact and
    non-empty $L\subseteq X$, $X\in\st$.  It is enough to show that
    $\alpha$-sections $A_{L}(0\,|\, \alpha=\e)\neq\emptyset$ for all
    sufficiently small $\e>0$. The rest of the statement follows from
    Lemma \ref{lem:A_0} and the fact that any intersection of right
    isosceles triangles with legs parallel to the axes and the
    hypotenuse lying on the diagonal is another such triangle.
    
    Let
    \begin{align*}
        W_1(x) &=
        \{\rho>0\,|\,\forall \rho'\in(0,\rho]\; \h(K, K-B_{\rho'}(x))
    \approx \h(K, K-\{x\})\},\\
        W_2(x) &=
        \{\rho>0\,|\,\forall \rho'\in(0,\rho]\; D_{\rho'}(x)\text{ is a
    topological ball}\},\\
        W_3(x) &=
        \left\{
            \begin{aligned}
                &\{\rho>0\,|\,\forall \rho'\in(0,\rho]\; S_{\rho'}(x)\pitchfork K\},&
            \quad K \text{ is a Whitney stratified set}\\
                &\X,&\quad \text{otherwise.}
            \end{aligned}
        \right.
    \end{align*}
    Define
    \begin{equation*}
        \bar{\rho}(x) = \sup{(W_1(x)\cap W_2(x)\cap W_3(x))}.
    \end{equation*}
    The fact that $\X$ locally strongly convex, the definition of
    homology stratification, and properties of
    Whitney stratified sets imply $\bar{\rho}(x)>0$ for any $x\in K$.
    We shall show that $\bar{\rho}(L)$ $=$ $\inf_{x\in L}{\bar{\rho}(x)}>0$.
    Suppose the opposite. Then we can find a
    sequence $x_n\in L$ such that $\bar{\rho}(x_n)\to 0$.
    Due to compactness of $L$, we can assume without loss of
    generality that $x_n$ is convergent. Let
    $\lim_{n\to\infty}{x_n} = \hat{x}\in L$. For any
    $\rho$ $\in$ $(0,\bar{\rho}(\hat{x}))$ we have $x_n$ $\in$ $B_{\rho}(\hat{x})$
    for all sufficiently large $n$. Take any
    $\rho'$ $\in$ $(0,\rho-d(\hat{x},x_n))$.
    Then we have induced homomorphisms:
    $$
    \h(K, K-B_{\rho}(\hat{x})) \to \h(K, K-B_{\rho'}(x_n)) \to
    \h(K, K-\{x_n\}).
    $$
    Conditions for homology stratification imply that if 
    $\rho$ is sufficiently small then all of the above homomorphisms are
    isomorphisms. In particular,
    \begin{equation*}
        \h(K, K-B_{\rho'}(x_n)) \to \h(K, K-\{x_n\})
    \end{equation*}
    is an isomorphism. But we can have $\rho'>\bar{\rho}(x_n)$ for large
    $n$. Contradiction.
    It follows that $A_{L}(0)\neq\emptyset$ since contains triples of the form
    $(\rho, \rho',0)$, where $\rho,\rho'$ $\in$ $(0,\bar{\rho}(L))$, $\rho\geq \rho'$.

    Let $U$ be a small enough neighborhood of $K$ so that there is a
    retraction $\pi:U\to K$. 
    Take $\rho_u>0$, $\beta>0$. For any $x\in K$ and any $\rho\in[0,\rho_u]$,
    the preimage
    \begin{equation*}
        U_{\rho}(x) = \pi^{-1}(K-D_{\rho}(x))
    \end{equation*}
    is an open set containing $K-B_{\rho+\beta}(x)$. Due to compactness
    of the latter, we can find
    $\e(x,\rho,\beta)>0$ such that
    \begin{equation*}
        D_{\e(x,\rho,\beta)}(K) - B_{\rho+\beta}(x) \subseteq U_{\rho}(x).
    \end{equation*}
    We claim that $\e(x,\rho,\beta)$ can be chosen so that
    \begin{equation*}
        \bar{\e}(\beta) = \inf\{\e(x,\rho,\beta)\,|\, x\in L,\rho\in[0,\rho_u]\}>0.
    \end{equation*}
    Suppose the opposite. Then we can find $x_n\in K$, $\rho_n\in[0,\rho_u]$, such that
    $\e(x_n,\rho_n,\beta)$ $\to$ $0$. Due to compactness of $K$ and
    $[0,\rho_u]$, we can assume $x_n$ $\to$ $\hat{x}$ $\in$ $K$,
    $\rho_n$ $\to$ $\hat{\rho}$ $\in$ $[0,\rho_u]$. Taking $\alpha>0$ sufficiently small,
    we get
    \begin{equation*}
        U_{\hat{\rho}+\alpha}(\hat{x}) \supseteq
        D_{\e(\hat{x}, \hat{\rho},\beta)}(K) - B_{\hat{\rho}+\beta-\alpha}(\hat{x}).
    \end{equation*}
    For all sufficiently large $n$, we have
    \begin{equation*}
        \e(\hat{x}, \hat{\rho},\beta) > \e(x_n,\rho_n,\beta),\quad
        B_{\hat{\rho}+\beta-\alpha}(\hat{x}) \subseteq
        B_{\hat{\rho}+\beta}(x_n),\quad
        D_{\hat{\rho}+\alpha}(\hat{x}) \supseteq D_{\hat{\rho}}(x_n).
    \end{equation*}
    Hence,
    \begin{equation*}
    U_{\hat{\rho}}(x_n)\supseteq
    U_{\hat{\rho}+\alpha}(\hat{x})\supseteq
    D_{\e(\hat{x},\hat{\rho},\beta)}(K)-B_{\hat{\rho}+\beta-\alpha}(\hat{x})\supseteq
    D_{\e(\hat{x},\hat{\rho},\beta)}(K)-B_{\hat{\rho}+\beta}(x_n).
    \end{equation*}
    This shows that
    $\e(x_n,\rho_n,\beta)$ could
    have been chosen as large as $\e(\hat{x},\hat{\rho},\beta)$. Contradiction.
    
    Now, suppose $\rho_u$ $\in$ $(0,\bar{\rho}(L))$,
    $\beta$ $\in$ $(0,\bar{\rho}(L)-\rho_u)$. Take
    \begin{equation*}
        \rho \in [0,\rho_u],\quad
        \rho' \in [\rho+\beta,\bar{\rho}(L)),\quad
        \e\in(0,\bar{\e}(\beta)),\quad
        x\in L.
    \end{equation*}
    The image $\pi(D_{\e}(K)-B_{\rho'}(x))$ is a compact set
    inside $K-D_{\rho}(x)$. Hence,
    \begin{equation*}
        \pi(D_{\e}(K)-B_{\rho'}(x))\subseteq K-B_{\rho''}(x)
    \end{equation*}
    for some $\rho''>\rho$. Consider
    \begin{equation*}
    \begin{tikzcd}[column sep=small]
        \h(K, K-B_{\rho'}(x))\ar[r, "i_*"] & \h(D_{\e}(K),
        D_{\e}(K)-B_{\rho'}(x))\ar[r, "\pi_*"]&
    \h(K, K-B_{\rho''}(x))
    \end{tikzcd}
    \end{equation*}
    Since $\pi_*\circ i_*$ is an isomorphism, $i_*$ must be injective,
    yielding $(\rho',\rho',\e)\in A_{L}(0)$. Hence,
    $A_{L}(0\,|\,\alpha=\e)\neq\emptyset$.
    
    It is worth pointing out
    that the aforementioned argument implies that no mater how small
    $\rho'>0$ is, we can find $\e>0$ such that $(\rho',\rho',\e)\in
    A_{L}(0)$.

\end{proof}

\begin{proof}[Proof of Lemma \ref{lem:seemly_K}]\leavevmode
    For simplicity, we shall use sub-indexes $ij$ instead of
    $X_{ij}^{\bm{w}}$ and $\bm{w}$ instead of
    $K^{\bm{w}}$.
    
    The definition of seemliness gives us, for each
    $X_{ij}^{\bm{w}}$, functions
    $\alpha_{ij}^{l}$, $\alpha_{ij}^{u}:$ $[0,\e_{ij}]\to\R_+$.
    Let $\e_{\bm{w}}$ $=$ $\min\{\e_{ij}\}>0$. Then we have
    functions
    \begin{equation*}
        \alpha_{\bm{w}}^{l} = \max\{\alpha_{ij}^{l}\}
        \quad\text{ and }\quad
        \bar{\alpha}_{\bm{w}}^{u} = \min\{\alpha_{ij}^{u}\}
    \end{equation*}
    defined on $[0,\e_{\bm{w}}]$.

    It follows from the proof of Lemma \ref{lem:A_0_X} that
    $\alpha$-sections of $A_{\bm{w}}(0)$, which is the intersection of all
    $A_{ij}(0)$, are non-empty for sufficiently small $\alpha$, say,
    $\alpha\in[0,\bar{\alpha}]$, and have the structure described in the
    lemma. We may reduce $\e_{\bm{w}}$, if necessary, to make sure that 
    $\alpha_{\bm{w}}^{l}(\e_{\bm{w}})\leq\bar{\alpha}$, and define
    $\alpha_{\bm{w}}^{u}$ $=$ $\min\{\bar{\alpha}_{\bm{w}}^{u},
    \bar{\alpha}\}$.

    The properties of $\alpha$-sections from Lemma
    \ref{lem:A} imply that if
    \begin{equation*}
        \begin{aligned}
            &D_H(A_{ij}(0\,|\,\alpha=\alpha'),
            A_{ij}(\e\,|\,\alpha=\alpha')) < a,\\
            &D_H(A_{i'j'}(0\,|\,\alpha=\alpha'),
            A_{i'j'}(\e\,|\,\alpha=\alpha')) < b,
        \end{aligned}
    \end{equation*}
    then
    \begin{equation*}
        D_H(A_{ij}(0\,|\,\alpha=\alpha')\cap A_{i'j'}(0\,|\,\alpha=\alpha'),
        A_{ij}(\e\,|\,\alpha=\alpha')\cap A_{i'j'}(\e\,|\,\alpha=\alpha')) <
        \max\{a,b\}.
    \end{equation*}
    It then follows that
    \begin{equation*}
        \sup_{\alpha'\in[\alpha_{\bm{w}}^{l}(\e),
        \alpha_{\bm{w}}^{u}(\e)]}{d_H(A_{\bm{w}}(0\,|\, \alpha=\alpha'), A_{\bm{w}}(\e\,|\,
        \alpha=\alpha'))}\to 0\text{ as } \e\to 0,
    \end{equation*}
    which proves the lemma.

\end{proof}

\begin{acknowledgements}
This work has been supported by the National Science Foundation grant
DMS-1622370.
\end{acknowledgements}

\bibliographystyle{plain}
\bibliography{refs}

\end{document}